
\documentclass{amsart}
\usepackage{amsmath}
\usepackage{amssymb} 
\usepackage{amscd} 
\usepackage{graphicx}   
\usepackage{xcolor}
\usepackage{marvosym}
\usepackage{float}

\usepackage{hyperref}
\hypersetup{
	colorlinks=true,
	linktoc=all,
    	linkcolor={red!50!black},
   	citecolor={blue!50!black},
  	urlcolor={blue!80!black}
	} 

\allowdisplaybreaks 


\usepackage{rotating}

\usepackage{booktabs} 

\usepackage{array}
\newcolumntype{M}[1]{>{\centering\arraybackslash}m{#1}} 

\newcommand{\N}{\mathbb N}
\newcommand{\Q}{\mathbb Q}

\newcommand{\calL}{\mathcal L}

\newcommand{\nbhd}{\mathcal{N}}
\newcommand{\bdry}{\partial}

\newcommand{\nos}{\normalsize}
\newcommand{\bpc}[1]{\begin{picture} #1 \end{picture}}

\newcommand{\pcr}[2]{\mbox{$\begin{array}{c}
			\includegraphics[scale=#2]{#1.eps}
		\end{array}$}}

\newcommand{\dis}{\displaystyle}

\newtheorem{theorem}{Theorem}[section]
\newtheorem{lemma}[theorem]{Lemma}

\newtheorem{proposition}[theorem]{Proposition}
\newtheorem{corollary}[theorem]{Corollary}
\newtheorem{claim}[theorem]{Claim}
\newtheorem{addendum}[theorem]{Addendum}

\newtheorem{conjecture}[theorem]{Conjecture}
\newtheorem{remark}[theorem]{Remark}
\newtheorem{question}[theorem]{Question}

\newtheorem*{thm_slope_conjecture_M}{Theorem~\ref{thm:SC_and_SSC_M}(1)}

\newtheorem*{thm_mazur_crossing}{Theorem~\ref{crossing_Mazur}}

\theoremstyle{definition}
\newtheorem{definition}[theorem]{Definition}
\newtheorem{example}[theorem]{Example}

\numberwithin{equation}{section}
\numberwithin{figure}{section}
\numberwithin{table}{section}

\DeclareMathOperator{\wind}{wind}
\DeclareMathOperator{\wrap}{wrap}

\definecolor{dartmouthgreen}{rgb}{0.05, 0.5, 0.06}

\renewcommand{\(}{\textup{(}}
\renewcommand{\)}{\textup{)}}

\usepackage{fullpage}

\setcounter{tocdepth}{2} 

\begin{document}
\baselineskip 14pt

\title{The Strong Slope Conjecture and crossing numbers for Mazur doubles of knots}

\author[K.L. Baker]{Kenneth L. Baker}
\address{Department of Mathematics, University of Miami, 
Coral Gables, FL 33146, USA}
\email{k.baker@math.miami.edu}

\author[K. Motegi]{Kimihiko Motegi}
\address{Department of Mathematics, Nihon University, 
3-25-40 Sakurajosui, Setagaya-ku, 
Tokyo 156--8550, Japan}
\email{motegi.kimihiko@nihon-u.ac.jp}

\author[T. Takata]{Toshie Takata}
\address{Graduate School of Mathematics, Kyushu University, 
744 Motooka, Nishi-ku, Fukuoka 819--0395, Japan}

\dedicatory{}

\begin{abstract}
The Slope Conjecture proposed by Garoufalidis asserts that the degree of the colored Jones polynomial determines 
a boundary slope, and its refinement, the Strong Slope Conjecture proposed by Kalfagianni and Tran asserts that the linear term in the degree determines the topology of an essential surface that satisfies the Slope Conjecture.  
Under certain hypotheses,
we show that Mazur doubles of knots satisfy the Strong Slope Conjecture if the original knot does.  
Consequently, any knot obtained by a finite sequence of cabling, untwisted $\omega$--generalized Whitehead doublings 
with $\omega > 0$, 
connected sums and Mazur doublings of $B$--adequate knots or torus knots satisfies the Strong Slope Conjecture.
On the other hand, it may be worth mentioning that under these hypotheses,
if there exists a knot with a Jones slope less than $-\frac{1}{4}$, then its Mazur double would either provide a counterexample to the Strong Slope Conjecture or have a Jones surface that is unrelated to any Jones surface of the knot.

Following work of Kalfagianni and Lee, 
we also use our results to show that the Mazur double of an adequate knot $K$ with trivial writhe has crossing number either $9c(K)+2$ or $9c(K)+3$. 
\end{abstract}

\maketitle

\renewcommand{\thefootnote}{}
\footnotetext{2020 \textit{Mathematics Subject Classification.}
Primary 57K10, 57K14, 57K30, Secondary 57K16
\footnotetext{ \textit{Key words and phrases.}
colored Jones polynomial, Jones slope, boundary slope, Mazur double, Slope Conjecture, Strong Slope Conjecture}
}

\tableofcontents

\section{Introduction} 
\label{intro}

Let $K$ be a knot in the $3$--sphere $S^3$.   
The Slope Conjecture of Garoufalidis \cite{Garoufalidis} and the Strong Slope Conjecture of Kalfagianni and Tran \cite{KT} propose relationships between a quantum knot invariant, 
the degrees of the colored Jones function of $K$, 
and a classical invariant, the boundary slope and the topology of essential surfaces in the exterior of $K$.

The \textit{colored Jones function} of $K$ is a sequence of Laurent polynomials $J_{K, n}(q) \in \mathbb{Z}[q^{\pm\frac{1}{2}}]$ for $n \in \mathbb{N}$, 
where $J_{\bigcirc, n}(q)=\frac {q^{n/2}-q^{-n/2}}{q^{1/2}-q^{-1/2}}$ for the unknot $\bigcirc$ and 
$\frac {J_{K, 2}(q)}{J_{\bigcirc, 2}(q)}$ is the ordinary Jones polynomial of $K$. 
Since the colored Jones function is $q$--holonomic \cite[Theorem~1]{GL}, 
the degrees of its terms are given by \textit{quadratic quasi-polynomials} for suitably large $n$ 
\cite[Theorem 1.1 \& Remark 1.1]{Gqqp}.   
For the maximum degree $d_+[J_{K,n}(q)]$, 
we set the quadratic quasi-polynomials to be 
\[ \delta_K(n) = a(n) n^2 + b(n) n+ c(n) \]
for rational valued periodic functions $a(n), b(n), c(n)$ with integral period.
Now define the sets of {\em Jones slopes} of $K$:
\[  js(K) = \{ 4a(n) \ |\ n \in \mathbb{N} \}.\]

Allowing surfaces to be disconnected, we say a properly embedded surface in a $3$--manifold is {\em essential} if each component is orientable, incompressible, and boundary-incompressible. A number $p/q \in \Q \cup \{\infty\}$ is a {\em boundary slope} of a knot $K$ if there exists an essential surface in the knot exterior $E(K)=S^3-\mathrm{int}N(K)$ with a boundary component representing $p[\mu]+q[\lambda] \in H_1(\bdry E(K))$ with respect to the standard meridian-longitude pair $(\mu, \lambda)$.  
Now define the set of  boundary slopes of $K$:
\[ bs(K) = \{ r \in \mathbb{Q}\cup \{\infty\}\ |\ r\ \mbox{ is a boundary slope of }\ K\}. \]
Since a Seifert surface of minimal genus is an essential surface, 
$0 \in bs(K)$ for any knot. 
Let us also remark that $bs(K)$ is always a finite set \cite[Corollary]{Hat1}. 

Garoufalidis conjectures that Jones slopes are boundary slopes.

\begin{conjecture}[\textbf{Slope Conjecture} \cite{Garoufalidis}]
\label{slope conjecture}
For any knot $K$ in $S^3$,  
every Jones slope is a boundary slope.  
That is $js(K) \subset bs(K)$. 
\end{conjecture}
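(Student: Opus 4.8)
The plan is to produce, for every Jones slope $4a(n)$ of $K$, an essential surface in the exterior $E(K)$ whose boundary slope equals $4a(n)$; since the statement quantifies over \emph{all} knots, the real content is a \emph{uniform} mechanism converting the degree data of the colored Jones function into a genuine surface, rather than a check over special families. First I would fix a diagram $D$ of $K$ and expand each $J_{K,n}(q)$ via the Kauffman bracket state sum (equivalently the fusion/skein expansion), so that the maximal degree $d_+[J_{K,n}(q)]$ is computed as a maximum, over states $s$, of an explicit quadratic-in-$n$ quantity. By \cite{Gqqp} this maximum is eventually the quadratic quasi-polynomial $\delta_K(n)=a(n)n^2+b(n)n+c(n)$ recorded in the excerpt, so $4a(n)$ is dictated by the leading behavior of the dominant states.

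Given such dominant states, I would build a spanning surface $S$ in $S^3$ from the corresponding state circles and the crossing bands of $D$, in the spirit of the state (Turaev) surfaces used for adequate knots. The key computation is that the boundary slope of $S$ in $\bdry E(K)$ is determined by a signed count of crossings (writhe-type data) that matches the coefficient $4a(n)$; this is the point at which the quadratic growth in $n$ of the state weight is converted into a linear-in-framing boundary slope. Because $a(n)$ is only periodic, one expects a different dominant state, and hence a different surface $S_n$, for each residue class modulo the period of $a$, so the construction must be carried out class by class within a single period.

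An independent prong, useful both as a cross-check and as a potential alternate route, is the recurrence/A--polynomial approach: the colored Jones function satisfies a linear $q$--difference equation by $q$--holonomicity \cite{GL}, whose characteristic quantum $A$--polynomial $\widehat{A}_K(q;M,L)$ should, under the AJ conjecture, specialize to the classical $A$--polynomial $A_K(M,L)$. One would then show that the slopes of the Newton polygon of $\widehat{A}_K$ govern the leading coefficients $a(n)$, and invoke Culler--Shalen theory, by which the slopes arising at ideal points of the $SL_2(\C)$ character variety, equivalently on the Newton polygon of $A_K$, are realized by essential surfaces and hence lie in $bs(K)$. Matching these two families of slopes would yield $js(K)\subset bs(K)$ directly.

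The hard part, and the reason the statement remains a conjecture in full generality, is controlling \emph{cancellation} in the state sum and \emph{incompressibility} of the resulting surfaces for knots that are not adequate. For non-adequate diagrams the naive top-degree term can cancel, so the dominant state need not be geometrically meaningful and the surface it predicts may fail to be essential, or may not even close up to an embedded surface realizing the predicted slope; at the same time, both the AJ conjecture and the precise dictionary between Newton-polygon slopes of $\widehat{A}_K$ and the quasi-polynomial coefficients $a(n)$ are themselves open. I therefore expect a complete argument for \emph{arbitrary} $K$ to be out of reach with present techniques, and the viable program is to establish the correspondence under structural hypotheses that suppress the bad cancellations and then transport it along satellite operations — which is precisely the conditional route the remainder of this paper develops.
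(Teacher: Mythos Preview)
The statement you are attempting to prove is a \emph{conjecture}, not a theorem: the paper does not prove it, and indeed no proof is known in full generality. There is therefore no ``paper's own proof'' to compare your proposal against. You seem to recognize this yourself in your final paragraph, where you conclude that a complete argument for arbitrary $K$ is out of reach; but then the framing of the rest of the proposal as a proof strategy is misleading. What you have written is a survey of heuristics and partial approaches (state surfaces, the AJ conjecture and Newton polygons of the $A$--polynomial), each of which is either known to work only under restrictive hypotheses or depends on other open conjectures.

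Concretely, the gaps you identify are genuine and fatal for a general proof: cancellation in the state sum for non-adequate diagrams means the ``dominant state'' need not exist in any usable sense, the state surface it suggests need not be essential, and the AJ route requires both the AJ conjecture and a slope-matching statement that are themselves open. The paper's actual contribution is not a proof of Conjecture~\ref{slope conjecture} but the conditional Theorem~\ref{thm:SC_and_SSC_M}, which shows that the conjecture is preserved under Mazur doubling given certain hypotheses on $\delta_K(n)$; this is done by an explicit computation of $\delta_{M(K)}(n)$ (Proposition~\ref{maxdeg_Mazur}) together with a direct construction of essential surfaces in the two-bridge link exterior $[2,1,4]$ via the Floyd--Hatcher classification. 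Your closing sentence correctly anticipates this, but the body of the proposal does not constitute a proof attempt that can be evaluated as such.
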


Garoufalidis' Slope Conjecture concerns only the quadratic terms of $\delta_K(n)$.
Later Kalfagianni and Tran proposed the Strong Slope Conjecture which subsumes the Slope Conjecture and asserts that the topology of the surfaces whose boundary slopes are Jones slopes may be predicted by the linear terms of $\delta_K(n)$.   
Define
\[  jx(K) =  \{ 2b(n) \ |\ n \in \mathbb{N} \}. \]

Let $K$ be a knot in $S^3$ with $\delta_K(n) = a(n) n^2 + b(n) n+ c(n)$. 
For a given Jones slope $p/q \in js(K)$ ($q > 0$), 
we say that $p/q$ satisfies $SS(n)$ ($n \in \N$)
if there is an essential surface $F_n$ in the exterior of $K$ 
such that 
\begin{itemize}
\item $F_n$ has the boundary slope $4a(n) = p/q$, and
\item $\displaystyle 2b(n) = \frac{\chi(F_n)}{|\bdry F_n| q}$.
\end{itemize}	

We call such an essential surface $F_n$ a \textit{Jones surface} of $K$. 

\begin{conjecture}[\textbf{The Strong Slope Conjecture} \cite{KT,K-AMSHartford}]
\label{YSSC2}
For any knot in $S^3$, 
every Jones slope satisfies $SS(n)$ for some $n \in \mathbb{N}$. 
\end{conjecture}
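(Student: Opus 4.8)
The plan is to attack the statement through the established bridge between the $q$--holonomicity of the colored Jones function and the boundary slopes detected by the $SL_2(\C)$ character variety. First I would invoke the quasi-polynomial structure of $\delta_K(n)=a(n)n^2+b(n)n+c(n)$, already supplied by Garoufalidis's theorem, to isolate the two pieces of data that must be matched geometrically: the quadratic coefficient, whose quadruple $4a(n)$ I would try to identify with a slope, and the linear coefficient, whose double $2b(n)$ I would try to read off as a normalized Euler characteristic $\chi(F_n)/(|\bdry F_n|q)$.

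For the slope half of the statement (the underlying Slope Conjecture), the natural route is the AJ / quantum $A$--polynomial machinery. The recurrence annihilating $J_{K,n}(q)$ has a Newton polygon whose slopes should match those of the $A$--polynomial, and, by Culler--Shalen theory, the slopes arising from ideal points of the character variety are boundary slopes. Tracking the leading $q$--degree through such a recurrence would produce $4a(n)$ as the slope of an essential surface $F_n$ obtained by the standard spinning construction at the corresponding ideal point. For the strong half I would then need a geometric model refined enough to also recover $2b(n)$: the approach I would take is to realize the candidate surfaces as spun-normal surfaces in an ideal triangulation of $E(K)$ and to compute the colored Jones degree from a state sum adapted to that triangulation, so that the linear defect of the degree is forced to equal the normalized Euler characteristic of the carried surface. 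If the degree function and the surface invariants are extracted from the same combinatorial data, the identity $2b(n)=\chi(F_n)/(|\bdry F_n|q)$ should fall out by comparing linear terms.

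The hard part --- and the reason the statement is phrased as a conjecture rather than proved as a theorem --- is precisely this final matching step: there is no known \emph{uniform} construction producing, for every knot and every $n$, an essential surface whose slope and whose Euler-characteristic ratio are simultaneously pinned to the quadratic and linear coefficients of $\delta_K(n)$. In full generality the AJ conjecture itself is open, the identification of quantum Newton-polygon slopes with $A$--polynomial slopes is unproven, and even granting both, the surface supplied at an ideal point need not carry the Euler characteristic dictated by $b(n)$.

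I therefore do not expect a complete proof of the universal statement to be within reach of present methods; any honest "proof" can only be a strategy whose decisive step is unavailable in general. The realistic program --- and the one this paper in fact pursues --- is to establish the conjecture on \textbf{closed families} of knots, namely $B$--adequate knots, torus knots, and the iterated satellites built from them by cabling, Whitehead doubling, connected sum, and Mazur doubling, where the degree data and the Jones surfaces can both be controlled explicitly and the matching $2b(n)=\chi(F_n)/(|\bdry F_n|q)$ verified directly. The main obstacle to going beyond such families back to the full conjecture is exactly the absence of a canonical Jones surface attached functorially to the colored Jones degree.
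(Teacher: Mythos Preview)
You have correctly recognized that the statement is a \emph{conjecture}, not a theorem, and that the paper does not attempt a proof of it in general; the paper only establishes it for Mazur doubles under specific hypotheses (Theorem~\ref{thm:SC_and_SSC_M}). Your assessment that no uniform construction of Jones surfaces is presently available, and that the realistic program is to verify the conjecture on closed families, matches exactly what the paper does.

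One clarification on your sketch of a possible attack: the route through the AJ conjecture and Newton polygons of the $A$--polynomial is plausible heuristics but is not the mechanism the paper uses even for its special cases. The paper's method is entirely hands-on: compute $\delta_{M(K)}(n)$ explicitly from a skein-theoretic formula for $J'_{M(K),n}(q)$ (Proposition~\ref{CJP_M(K)}), then build the candidate Jones surface by gluing a Floyd--Hatcher surface in the pattern exterior to a Jones surface for the companion, and finally check the Euler-characteristic identity by direct calculation from Table~\ref{table:5/14paths}. So while your strategic overview is sound, the concrete techniques that actually succeed on the known families are combinatorial and surface-theoretic rather than character-variety-based.
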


It is convenient to say that 
$K$ satisfies the 
\textit{Strong Slope Conjecture with $SS(n_0)$}, 
if a Jones slope $p/q = 4a(n_0)$ satisfies $SS(n_0)$. 
(This is weaker than the Strong Slope Conjecture in the sense that 
we do not consider if another Jones slope $p'/q'$ other than $p/q$ satisfies $SS(n)$ for some other $n$.)

\begin{remark}
\label{constant}
Let $K$ be a knot with constant $a(n)$ \(i.e. the period of $a(n)$ is $1$\).  
Then it has a single Jones slope $p/q$ and 
if it satisfies $SS(n_0)$ for some $n_0$, then $K$ satisfies the Strong Slope Conjecture. 
If $b(n)$ is also constant, then we may take $n_0 = 1$ and the Strong Slope Conjecture 
implies the Strong Slope Conjecture with $SS(1)$. 
Presently, no knots are known for which the period of $a(n)$ is not $1$. 
\end{remark}

\medskip

\begin{example}
\label{earlyexamples}
Let $K$ be a knot which appears in the following list.
\begin{enumerate}
\item Torus knots \cite{Garoufalidis}, \cite[Theorem 3.9]{KT}. 
\item Adequate knots \cite{FKP}, \cite[Lemma 3.6, 3.8]{KT}, and hence alternating knots. 
\item Non-alternating knots with up to $9$ crossings except for  $8_{20}$, $9_{43}$, $9_{44}$ \cite{Garoufalidis}, \cite{KT,Ho}.   
($8_{20}$, $9_{43}$, $9_{44}$ satisfy the Strong Slope Conjecture, 
but for these knots the coefficient $b(n)$ has period $3$.)
\item Graph knots \cite{MT,BMT_graph}. 
\end{enumerate}
Then, writing 
$\delta_K(n) = a(n) n^2 + b(n) n + c(n)$,
we have that  
$a(n), b(n)$ are constant, 
and $c(n)$ has period at most two. 
Moreover, $K$ satisfies the Slope Conjecture, the unique Jones slope satisfies $SS(1)$, 
and hence $K$ satisfies the Strong Slope Conjecture \(Remark~\ref{constant}\).  
Note also that if $K$ is nontrivial, then $b(n) = b \le 0$. 
\end{example}

It should be mentioned that certain families of $3$-tangle pretzel knots 
and certain families of Montesinos knots are also known to satisfy the Slope Conjecture and the Strong Slope Conjecture; see \cite{LV} and \cite{GLV,LYL}.

\subsection{Main Results}
In this article we give further supporting evidence for the Slope Conjecture and the Strong Slope Conjecture by examining them for satellite knots with the Mazur pattern. 

Let $V$ be a standardly embedded solid torus in $S^3$ which contains a knot $k$ in its interior 
so that $k$ is not a core of $V$ and there is no $3$--ball in $V$ containing $k$. 
We call $k$ a \textit{pattern knot} and the pair $(V, k)$ a \textit{pattern}. 
Take an orientation preserving embedding $f \colon V \to S^3$ which sends 
a core of $V$ to a nontrivial knot $K \subset S^3$ and 
sends a longitude of $V$ to that of $K$.  
Such an embedding is often called a \textit{faithful embedding}. 
The image $f(k)$ is a \textit{satellite knot} with a companion knot $K$ and pattern $(V, k)$. 

The (unoriented) \textit{winding number} of $k$ in $V$ is the absolute value of the algebraic intersection number between $k$ and a meridian disk of $V$, 
which is denoted by $\wind_V(k) \ge 0$. 
The \textit{wrapping number} of $k$ in $V$ is the minimal geometric intersection number between $k$ and a meridian disk of $V$, which is denoted by $\wrap_V(k) \ge 1$. 
Note that $\wind_V(k)\leq \wrap_V(k)$ and they are equal $\pmod{2}$.
If $\wrap_V(k) = 1$, 
then the satellite operation is nothing but a connected sum with $k$. 
The Slope Conjecture and the Strong Slope Conjecture have been studied for connected sums 
in \cite{BMT_graph}. 
So we may assume $\wrap_V(k) \ge 2$.

Among satellite knots with $\wrap_V(k) \ge 2$, the Slope Conjecture and the Strong Slope Conjecture have been studied for
\begin{itemize}
	\item ``cables'' in which $0 < \wind_V(k) = \wrap_V(k)$ \cite{KT,BMT_graph}, and
	\item ``twisted generalized Whitehead doubles'' in which $0 = \wind_V(k) < \wrap_V(k)$ \cite{BMT_tgW}. 
\end{itemize}
We are interested in the situation between these two extremes where $0 < \wind_V(k) < \wrap_V(k)$. 

Perhaps the simplest and most well-known  pattern with $0<\wind_V(k) < \wrap_V(k)$ is the {\em Mazur pattern} $(V,k)$ depicted in Figure~\ref{fig:Mazur_double} (Left) which has $0 < 1 = \mathrm{wind}_V(k) < 3= \mathrm{wrap}_V(k)$. 
For a faithful embedding $f \colon V \to S^3$, 
the knot $f(k)$ is a {\em Mazur double} of $K$. 
For short we will denote a Mazur double of $K$ by $M(K)$.   
For some notable literature involving Mazur doubles, see \cite{Levine,FPR,PetWon}. 

\begin{figure}[!ht]
\includegraphics[width=0.6\linewidth]{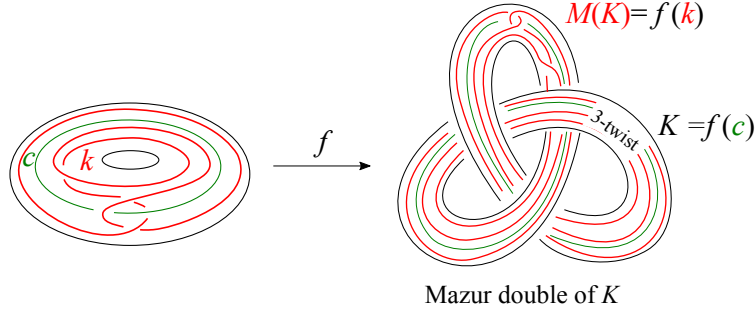}
\caption{A Mazur double of a knot $K$}
\label{fig:Mazur_double}
\end{figure}

In what follows we use the following abbreviation. 
For a quadratic quasi-polynomial $\delta_K(n) = a(n)n^2 + b(n) n + c(n)$ with period $\le 2$, 
we put $a_i = a(i),\ b_i = b(i)$ and $c_i= c(i)$, where $i = 1, 2$.

\begin{theorem}[The Slope Conjecture and Strong Slope Conjecture for Mazur Doubles]
\label{thm:SC_and_SSC_M}
Let $K$ be a knot. 
We assume that the period of $\delta_K(n)$ is less than or equal to $2$. 
Assume that $b_i \le 0$, and that $b_i = 0$ implies $a_i \ne 0$.
\begin{enumerate}
\item If $K$ satisfies the Slope Conjecture, 
then its Mazur double $M(K)$ also satisfies the Slope Conjecture. 
\item Further assume that $-\frac{1}{4} \le a_i$.  
If $K$ satisfies the Strong Slope Conjecture,  
then its Mazur double also satisfies the Strong Slope Conjecture. 
\end{enumerate}
\end{theorem}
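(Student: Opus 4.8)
The plan is to attack the two assertions through the standard two-sided strategy for satellite knots: first compute the degree quasi-polynomial $\delta_{M(K)}(n)$ from that of $K$ on the quantum side, and then build an essential (Jones) surface in $E(M(K))$ realizing the resulting slope and linear coefficient on the topological side.

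First I would establish a cabling formula for the colored Jones function of the Mazur double. Expanding the Mazur pattern $k$, colored by $n$, in the Kauffman bracket skein module of the solid torus $V$ writes it in the basis of cores colored by $m$; applying the faithful embedding $f$ then yields
\[
J_{M(K), n}(q) = \sum_{m} \alpha_{n,m}(q)\, J_{K, m}(q),
\]
where the coefficients $\alpha_{n,m}$ depend only on the pattern. Because $\wrap_V(k)=3$, the index $m$ ranges over an interval whose length grows linearly in $n$, and because $\wind_V(k)=1$ the companion genuinely contributes to the top degree, unlike the winding-number-zero situation of \cite{BMT_tgW}. I would compute each $d_+[\alpha_{n,m}]$ as an explicit quadratic quasi-polynomial in $(n,m)$ and then maximize $d_+[\alpha_{n,m}] + \delta_K(m)$ over the admissible $m$. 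The hypotheses $b_i \le 0$, that $b_i=0$ forces $a_i\ne 0$, and (for part (2)) $a_i \ge -\tfrac14$ are precisely what guarantee that this maximum is attained at a single, explicitly identifiable $m=m(n)$ with no cancellation of leading coefficients, so that
\[
\delta_{M(K)}(n) = d_+[\alpha_{n,m(n)}] + \delta_K(m(n)).
\]
Reading off the quadratic and linear parts then determines $js(M(K)) = \{4a_{M(K)}(n)\}$ together with the linear data $2b_{M(K)}(n)$.

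For part (1), I would realize each Jones slope $p/q = 4a_{M(K)}(n)$ as a boundary slope by assembling a surface in the decomposition $E(M(K)) = (V - N(k)) \cup_T E(K)$ along the companion torus $T = \bdry V$. Since $K$ satisfies the Slope Conjecture, there is an essential surface $S \subset E(K)$ with boundary slope $4a_K(m(n))$; I would glue a suitable number of parallel copies of $S$, matching the count and slope on $T$ dictated by the intersection of a fixed essential pattern surface in $V - N(k)$ with $T$. The resulting surface $F_n$ has all of its boundary on $\bdry E(M(K))$, and a direct slope computation should confirm its boundary slope is exactly $p/q$; essentiality follows because both pieces are essential and they meet $T$ in parallel essential curves, so no compressing or boundary-compressing disk can cross $T$.

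For part (2), I would upgrade $F_n$ to a Jones surface by tracking Euler characteristics. Beginning from a Jones surface $F'$ of $K$ realizing $SS(m(n))$, so that $2b_K(m(n)) = \chi(F')/(|\bdry F'|\, q')$, the gluing adds a fixed, pattern-determined Euler-characteristic contribution and a controlled number of boundary components; the arithmetic must then reproduce the linear coefficient $2b_{M(K)}(n)$ found above, and verifying this identity is the crux of the strong statement. I expect the main obstacle throughout to be the optimization step: identifying the dominant index $m(n)$ and excluding leading-term cancellation is exactly where the sign condition on $b_i$ and the threshold $a_i \ge -\tfrac14$ are forced, and the appearance of $-\tfrac14$ as the critical Jones slope (noted in the abstract) signals that outside this range a different term dominates and the surface produced by the pattern need no longer be related to a Jones surface of $K$.
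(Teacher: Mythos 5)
Your quantum-side plan coincides with the paper's (expand the colored Mazur pattern skein-theoretically, write $J_{M(K),n}$ as a sum of coefficients times $J_{K,l}$ with $l$ ranging up to $3n$, and use the hypotheses on $a_i, b_i$ to locate a unique dominant term with no cancellation), and at sketch level it is fine. The genuine gap is on the topological side. You propose to glue parallel copies of a Jones surface $S \subset E(K)$ to ``a fixed essential pattern surface in $V - N(k)$.'' But gluing along $T = \partial V$ forces the pattern surface's boundary slope on $T$ to equal the companion's Jones slope $4a_i$, a rational number that varies with $K$; no single fixed surface can accommodate this. What is required is a \emph{family} of essential surfaces in the Mazur pattern exterior realizing every admissible slope on $T$, together with their Euler characteristics, their boundary slopes on $\partial N(k)$, and their numbers of boundary components, since all of these enter the $SS(n)$ arithmetic in part (2). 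The paper supplies this by observing that the pattern exterior is the exterior of the two-bridge link $[2,1,4]$ (the link $\mathcal{L}_{5/14}$) and invoking the Floyd--Hatcher classification together with Hoste--Shanahan's boundary-slope computations: the weighted surfaces $F_{\gamma_1,\alpha,\beta}$ with slope pair $(3\tfrac{\beta}{\alpha}, 3\tfrac{\alpha}{\beta})$ and $\chi = -2\alpha-2\beta$ handle $a_i > 0$, while $F_{\gamma_2,\alpha,\beta}$ with slope pair $(-\tfrac{\beta}{\alpha}, -\tfrac{\alpha}{\beta})$ and $\chi = -2\alpha$ handle $-\tfrac14 \le a_i \le 0$, with $(\alpha,\beta)$ chosen so the slope on $T$ equals $4a_i$. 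Without this construction (or an equivalent one) neither part (1) nor part (2) can be completed; with it, your gluing, the matching condition $m|\partial S_{K,i}| = n|\partial S_i \cap T|$, and the Euler-characteristic bookkeeping go through as in the paper, including the fix of replacing a non-orientable glued surface by the boundary of a twisted $I$--bundle.

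A smaller but telling misdiagnosis: you attribute the $-\tfrac14$ threshold to the degree optimization (``outside this range a different term dominates''). In fact the dominant term, hence the formula $\delta_{M(K)}(n) = a_i n^2 + (b_i-1)n + c_i + 1$, is the same for \emph{all} $a_i \le 0$; the threshold arises purely on the surface side. For $-\tfrac14 \le a_i \le 0$ the slope $4a_i$ is carried by $F_{\gamma_2}$ with the pattern exterior viewed from one side of the two-bridge link, and the glued surface satisfies $\chi / (|\partial|\, q) = 2(b_i - 1) = 2b_{M,i}$; for $a_i < -\tfrac14$ it can only be carried from the other side, and the same computation yields $2b_i + 8a_i \ne 2b_{M,i}$, which is exactly the content of the paper's Addendum~\ref{add:mazurSSCfailure}. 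This distinction matters because it shows the hypothesis $a_i \ge -\tfrac14$ is not needed to identify $\delta_{M(K)}$ but is needed for the constructed surface to be a Jones surface.
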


Note that any torus knot and any $B$--adequate knot satisfies $a_i \ge 0$ \cite[Lemma 6]{FKP}. 
Furthermore, we see that the operation of cabling, untwisted $\omega$--generalized Whitehead doubling
with $\omega > 0$, connected sum,  and Mazur doubling 
preserve this property. 
So far we have no example with $a_i < 0$. 
It may be reasonable to ask: 

\begin{question}
\label{a_i<0}
Does there exist a knot $K$ with negative Jones slope?  That is, does there exist a knot for which $a_i < 0$? 
\end{question}

\begin{remark}
In the proof of 
Theorem~\ref{thm:SC_and_SSC_M}(2), 
 the Strong Slope Conjecture is affirmed for the Mazur double $M(K)$ through using a Jones surface for $K$ to construct a Jones surface for $M(K)$.
However, if there were a knot $K$ as in the hypotheses of Theorem~\ref{thm:SC_and_SSC_M}(2) but with $a_i <-\frac14$, then either the Mazur double $M(K)$ would provide a counterexample to the Strong Slope Conjecture, or it would have a Jones surface that is unrelated to any Jones surface of $K$.  See Addendum~\ref{add:mazurSSCfailure}.
\end{remark}
\medskip

\begin{remark}
\label{iteration}
Suppose $K$ satisfies the following conditions: 
\begin{enumerate}
\item
$\delta_K(n)$ has period less than or equal to $2$ and has $b_i \le 0$. 
\item
If $a_i = 0$, then $b_i \ne 0$ \(equivalently if $b_i =0$, then $a_i \ne 0$\). 
\item
$a_i \ge -\frac{1}{4}$.
\item 
$K$ satisfies the Strong Slope Conjecture.
\end{enumerate}

Then Proposition~\ref{maxdeg_Mazur} shows that 
$M(K)$ also satisfies conditions (1), (2), and (3), 
and Theorem~\ref{thm:SC_and_SSC_M}(2) shows that $M(K)$ also satisfies condition (4). 
Hence, we may conclude that arbitrarily many iterations of Mazur doubling preserves the property of satisfying the Strong Slope Conjecture.
\end{remark}

\medskip

We observe in Proposition~\ref{Normalized Sign Condition} that 
the Sign Condition introduced in \cite{BMT_tgW}
is preserved under taking Mazur doubles. 
See Subsection~\ref{computation_degrees}, 
for the definition of the Sign Condition. 
Given that 
$B$--adequate knots and torus knots satisfy $a_i \ge 0$, 
and the operations of cabling, untwisted $\omega$--generalized Whitehead doubling
with $\omega > 0$, connected sum, and Mazur doubling 
preserve this property of having $a_i \ge 0$ (as mentioned just after Theorem~\ref{thm:SC_and_SSC_M}(2)), 
we may extend \cite[Corollary~1.9]{BMT_tgW} to include Mazur doubles. 

\begin{corollary}
\label{family_SCC}
Any knot obtained by a finite sequence of cabling, untwisted $\omega$--generalized Whitehead doublings with $\omega > 0$, connected sums, and Mazur doubling
of $B$--adequate knots or torus knots satisfies the Slope Conjecture and the Strong Slope Conjecture.  \qed
\end{corollary}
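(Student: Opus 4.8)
The plan is to argue by induction on the number of satellite and connected-sum operations used to build the knot, taking the base class to be $B$--adequate knots and torus knots. The inductive invariant I would carry along is not merely ``satisfies the Strong Slope Conjecture'' but the full package of hypotheses appearing in Theorem~\ref{thm:SC_and_SSC_M}: namely that $\delta_K(n)$ has period at most $2$, that $b_i \le 0$ with $b_i = 0$ forcing $a_i \ne 0$, and that $a_i \ge 0$ (which in particular yields $a_i \ge -\tfrac14$). For the base case, Example~\ref{earlyexamples} records that torus knots and adequate knots (hence $B$--adequate knots) satisfy the Strong Slope Conjecture with constant $a(n)$ and $b(n)$, period at most $2$, and $b_i \le 0$; and $a_i \ge 0$ follows from \cite[Lemma~6]{FKP}. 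The nondegeneracy $b_i = 0 \Rightarrow a_i \ne 0$ holds for nontrivial knots in this class, since for them the relevant degree is genuinely quadratic.

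For the inductive step I would treat the four operations separately. Cabling, connected sum, and untwisted $\omega$--generalized Whitehead doubling with $\omega > 0$ are exactly the operations addressed in \cite[Corollary~1.9]{BMT_tgW} (building on \cite{KT,BMT_graph}): that corollary already shows these three operations preserve the property of satisfying the Strong Slope Conjecture, and the accompanying degree formulas for $a(n)$ and $b(n)$ show they preserve period $\le 2$, $b_i \le 0$, and $a_i \ge 0$, as noted in the paragraph following Theorem~\ref{thm:SC_and_SSC_M}. For Mazur doubling, the hypotheses of Theorem~\ref{thm:SC_and_SSC_M} are supplied by the inductive invariant---in particular $a_i \ge 0 \ge -\tfrac14$---so Theorem~\ref{thm:SC_and_SSC_M}(2) gives that $M(K)$ satisfies the Strong Slope Conjecture, while Proposition~\ref{maxdeg_Mazur} (see Remark~\ref{iteration}) shows that Mazur doubling propagates conditions (1)--(3) of the invariant. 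Thus each of the four operations sends a knot satisfying the full invariant to another such knot, and since any of the operations may be applied at each stage, the induction closes. Because satisfying the Strong Slope Conjecture implies satisfying the Slope Conjecture, this proves both conclusions of the corollary.

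The main obstacle I anticipate is not any single step but the bookkeeping needed to certify that all four operations \emph{simultaneously} preserve every clause of the inductive invariant---most delicately that $a_i \ge 0$ is maintained under each operation (so that the Theorem~\ref{thm:SC_and_SSC_M}(2) hypothesis $a_i \ge -\tfrac14$ is never violated) and that the nondegeneracy $b_i = 0 \Rightarrow a_i \ne 0$ survives. This requires assembling the explicit transformation rules for the leading coefficients $a(n)$ and $b(n)$ under cabling, connected sum, Whitehead doubling, and Mazur doubling, and checking the sign and vanishing conditions case by case; one also must confirm that the period of $\delta$ does not escape the range $\le 2$ (in practice $a(n)$ and $b(n)$ remain constant, cf.\ Remark~\ref{constant}, with only $c(n)$ acquiring period $2$), since a growth in period would take the resulting knot outside the scope of Theorem~\ref{thm:SC_and_SSC_M}.
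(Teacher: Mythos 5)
Your overall strategy---induction over the building sequence, carrying a package of hypotheses strong enough to re-enter the relevant theorems at each step---is exactly the paper's strategy, and your treatment of the Mazur steps (Theorem~\ref{thm:SC_and_SSC_M}(2) together with Proposition~\ref{maxdeg_Mazur} and Remark~\ref{iteration}) is correct. However, your inductive invariant is missing one clause that the argument cannot do without: the Sign Condition of \cite{BMT_tgW}. The results you invoke to handle a cabling or untwisted $\omega$--generalized Whitehead doubling step cannot be the bare statement of \cite[Corollary~1.9]{BMT_tgW}, since that corollary only applies to knots built from $B$--adequate knots and torus knots by those three operations and says nothing about a knot that has already undergone a Mazur doubling. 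What you must use instead are the underlying theorems of \cite{BMT_tgW}, and their hypotheses include the Sign Condition on the input knot: it is what rules out cancellation among top-degree terms in the colored Jones computations for cables and Whitehead doubles, and it is not implied by the clauses you do carry (period $\le 2$, $b_i \le 0$, $b_i = 0 \Rightarrow a_i \ne 0$, $a_i \ge 0$, and the Strong Slope Conjecture). So, as written, your induction does not close whenever a cabling or Whitehead doubling is applied after a Mazur doubling.

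The gap is fixable with material already in the paper, and filling it is precisely the paper's route: Propositions~\ref{Sign Condition} and~\ref{Normalized Sign Condition} show that Mazur doubling preserves the Sign Condition (indeed the paper states explicitly that this is ``used to prove'' the corollary), \cite[Proposition~6.6]{BMT_tgW} shows that connected sum, cabling, and untwisted $\omega$--generalized Whitehead doubling preserve it, and $B$--adequate knots and torus knots satisfy it. Adding ``$K$ satisfies the Sign Condition'' to your inductive invariant and citing these facts closes the induction and recovers Corollary~\ref{family_SCC}; without that clause, the appeal to \cite{BMT_tgW} in your inductive step is not justified.
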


\bigskip

\subsection{Minimum degree and mirror images}

The minimum degree $d_-[J_{K,n}(q)]$ is also known to be a quadratic quasi-polynomial for sufficiently large $n$ as well as the maximum degree 
$d_-[J_{K,n}(q)]$. 
We set the quadratic quasi-polynomial to be $\delta^*_K(n) = a^*(n) n^2 + b^*(n) n+ c^*(n)$
for rational valued periodic functions $a^*(n), b^*(n), c^*(n)$ with integral period.
We may define the sets of {\em Jones slopes} of $K$ for minimum degree as 
$ js^*(K) = \{ 4a^*(n) \ |\ n \in \mathbb{N} \}$. 
Then the Slope Conjecture with the minimum degree asserts that $ js^*(K) \subset bs(K)$ for any knot $K \subset S^3$. 
Similarly define $ jx^*_K =  \{ 2b^*(n) \ |\ n \in \mathbb{N} \}$ and 
for a given $p^*/q^* \in js^*(K)$ $(q^* > 0)$, 
we say that $p^*/q^*$ satisfies $SS^*(n)$ $(n \in \mathbb{N})$ if there is an essential surface $F^*_n$ in the exterior of $K$ such that 
\begin{itemize}
\item
$F^*_n$ has the boundary slope $4a^*(n) = p^*/q^*$, and 
\item
$2b^*(n) = - \dfrac{\chi(F^*_n)}{| \partial F^*_n|q^*}$. 
\end{itemize}
The essential surface $F^*_n$ is also called a \textit{Jones surface} of $K$.  
Then the Strong Slope Conjecture with the minimum degree asserts that 
every $p^*/q^* \in js^*(K)$ satisfies $SS^*(n)$ for some $n \in \mathbb{N}$. 

For a quadratic quasi-polynomial 
$\delta^*_K(n) = a^*(n)n^2 + b^*(n) n + c^*(n)$ with period $\le 2$, 
We put $a^*_i = a^*(i),\ b^*_i = b^*(i)$ and $c^*_i= c^*(i)$, where $i = 1, 2$. 

\begin{theorem}[The Slope  Conjecture and Strong Slope Conjecture for the minimum degree of Mazur doubles]
\label{strong_slope_conjecture_min_deg}
Let $K$ be a knot. 
We assume that the period of $\delta^*_K(n)$ is less than or equal to  $2$.  Assume that $b^*_i \ge 0$, 
that $b^*_i=0$ implies $a^*_i \neq 0$, 
and that $a^*_1 + b^*_1 + c^*_1 \ge 0$. 
Furthermore, assume that $d_{-}[J_{K, n}(q)] = \delta^*_K(n)$ for all $n \ge 2$. 
\begin{enumerate}
\item
If $K$ satisfies the Slope Conjecture  for the minimum degree, 
then its Mazur double also satisfies the Slope Conjecture for the minimum degree. 
\item
If $K$ satisfies the Strong Slope Conjecture  for the minimum degree, 
then its Mazur double also satisfies the Strong Slope Conjecture for the minimum degree. 
\end{enumerate}
\end{theorem}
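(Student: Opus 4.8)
The plan is to prove the minimum-degree statement as the mirror-image counterpart of the maximum-degree Theorem~\ref{thm:SC_and_SSC_M}, running the same construction with the roles of the two extreme degrees interchanged. The organizing principle is the mirror relation $J_{\bar K,n}(q)=J_{K,n}(q^{-1})$, which gives $d_-[J_{K,n}(q)]=-d_+[J_{\bar K,n}(q)]$ and hence $\delta^*_K(n)=-\delta_{\bar K}(n)$, so that $a^*_i=-a_{\bar K,i}$, $b^*_i=-b_{\bar K,i}$, $c^*_i=-c_{\bar K,i}$. Under this dictionary the hypotheses $b^*_i\ge0$ and ``$b^*_i=0\Rightarrow a^*_i\ne0$'' become the hypotheses $b_{\bar K,i}\le0$ and ``$b_{\bar K,i}=0\Rightarrow a_{\bar K,i}\ne0$'' of Theorem~\ref{thm:SC_and_SSC_M}; mirroring also carries an essential surface in $E(K)$ to one in $E(\bar K)$ with negated boundary slope and unchanged $\chi$ and $|\bdry F|$, so that a surface certifying $SS^*(n)$ for the minimum degree of $K$ is exactly a surface certifying $SS(n)$ for the maximum degree of $\bar K$. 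What prevents a one-line reduction is that the Mazur pattern is chiral: $\overline{M(K)}=\bar M(\bar K)$ uses the oppositely-clasped pattern $\bar M$, which is not the pattern to which Theorem~\ref{thm:SC_and_SSC_M} applies. I will therefore carry out the parallel computation directly, and the different hypotheses are precisely the domination conditions for this opposite pattern.

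The first step is to establish the minimum-degree analogue of Proposition~\ref{maxdeg_Mazur}, expressing $d_-[J_{M(K),n}(q)]$ as the minimum of the finitely many quadratic quasi-polynomials obtained by expanding the Mazur pattern in the skein module of the solid torus and substituting the cabled colored Jones invariants of the companion $K$. Because the pattern has $\wrap_V(k)=3$, the colors of $K$ entering the expansion range over values determined by $n$ that descend to small colors even for large $n$; the hypothesis that $d_-[J_{K,n}(q)]=\delta^*_K(n)$ holds for \emph{all} $n\ge2$ is used so that the companion's minimum-degree data is genuinely given by the quasi-polynomial at every color that appears, and the boundary inequality $a^*_1+b^*_1+c^*_1\ge0$, namely $\delta^*_K(1)\ge0$, is what forces the intended branch to attain the minimum at the low-color end. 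Selecting this branch produces explicit leading data $a^*_{M(K)}$, $b^*_{M(K)}$, $c^*_{M(K)}$ for the Mazur double.

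For part (1) I realize the Jones slope $4a^*_{M(K)}$ as a boundary slope by the standard satellite construction: writing $E(M(K))=E(K)\cup_{f(\bdry V)}f\bigl(V\setminus N(k)\bigr)$, I take an essential surface in the pattern piece $V\setminus N(k)$ whose slope on $\bdry N(k)$ is $4a^*_{M(K)}$ and cap off its curves on the companion torus $f(\bdry V)$ with parallel copies of an essential surface in $E(K)$ of the companion slope dictated by the minimizing branch. Since $\wind_V(k)=1$ the gluing is consistent and, after the usual incompressibility check, the assembled surface $F$ is essential. For part (2) I verify the identity $2b^*_{M(K)}=-\chi(F)/(|\bdry F|\,q^*)$ by computing $\chi(F)$ and $|\bdry F|$ additively from the two pieces: the pattern piece contributes fixed amounts while the companion copies contribute multiples of the data of a minimum-degree Jones surface of $K$, which exists by the assumed Strong Slope Conjecture for $K$. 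Matching these totals against the values $a^*_{M(K)}$, $b^*_{M(K)}$ read off from the degree computation certifies $SS^*(n)$.

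The step I expect to be the main obstacle is the branch selection — proving that the single quadratic quasi-polynomial I isolate really attains the minimum of the competing family for every relevant $n$. In the maximum-degree setting this domination was guaranteed by $-\tfrac{1}{4}\le a_i$; here that inequality is unavailable (it would translate to $a^*_i\le\tfrac{1}{4}$, which is \emph{not} assumed), and its role is distributed among $b^*_i\ge0$, the exact-degree hypothesis for all $n\ge2$, and $a^*_1+b^*_1+c^*_1\ge0$. Comparing the finitely many quadratics under exactly these hypotheses, and confirming that the minimizing branch is the one whose slope and Euler-characteristic ratio are matched by the satellite surface above, is the technical core; once it is settled, the additive bookkeeping of $\chi$ and $|\bdry F|$ follows the template of the proof of Theorem~\ref{thm:SC_and_SSC_M}.
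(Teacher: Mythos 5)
Your proposal follows essentially the same route as the paper: the paper first proves exactly the minimum-degree analogue you describe (Proposition~\ref{mindeg_Mazur}, via the branch-selection analysis of Proposition~\ref{mindeg_Mazur_normalized}, where precisely the hypotheses you single out --- $b^*_i \ge 0$, $a^*_1+b^*_1+c^*_1 \ge 0$, and exactness of $\delta^*_K(n)$ for all $n \ge 2$ --- are what pin down the minimizing branch at the low-color end), and then realizes the resulting slopes by gluing Floyd--Hatcher surfaces from Table~\ref{table:5/14paths} in the pattern piece to parallel copies of a minimum-degree Jones surface of $K$, with the additive $\chi$ and $|\partial F|$ bookkeeping you outline. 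One feature your sketch covers only implicitly: for $a^*_i \ge -\frac{1}{12}$ the minimizing branch yields the constant Jones slope $-5$, which the paper realizes by the pattern-intrinsic surface $F_{\gamma_7}$ (disjoint from the companion torus), so in that branch no Jones surface of $K$ enters at all and your ``cap off'' construction degenerates correctly to this case.
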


\begin{remark}
\label{min_condition}
\begin{enumerate}
\item
Every torus knot satisfies the condition of Theorem~\ref{strong_slope_conjecture_min_deg}. 
See \cite[4.8]{Garoufalidis} in which Garoufalidis uses $J_{K, n}(q)$ to mean the normalized colored Jones polynomial $J'_{K, n}(q)$. 

\item
$A$-adequate knots and $B$--adequate knots satisfies the condition of Theorem~\ref{strong_slope_conjecture_min_deg} \(\cite[Lemma~3.6]{KT}\). 
In particular, adequate knots and hence alternating knots satisfy the condition of Theorem~\ref{strong_slope_conjecture_min_deg}. 
\end{enumerate}
\end{remark}

Denote by $\overline{K}$ the mirror image of $K$. 
Note that the companion knot of $\overline{M(K)}$ is $\overline{K}$. 
The quadratic quasi-polynomial associated with $d_+[J_{\overline{K}, n}(q)]$ is denoted by 
$\delta_{\overline{K}}(n) = \overline{a}(n)n^2 + \overline{b}(n)n + \overline{c}(n)$.  
Recall that 
$\delta_{\overline{K}}(n) = -\delta^*_K(n)$, 
and hence $js(\overline{K}) = - js^*(K)$. 

\begin{lemma}
\label{mirror}
$K$ satisfies the Slope Conjecture and the Strong Slope Conjecture for the minimum degree if and only if 
$\overline{K}$ satisfies the Slope Conjecture and the Strong Slope Conjecture \(for the maximum degree\). 
\end{lemma}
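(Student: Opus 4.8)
The plan is to deduce everything about $\overline{K}$ from the corresponding minimum-degree statements for $K$ by transporting surfaces across the standard orientation-reversing homeomorphism induced by mirroring. Concretely, let $r\colon S^3 \to S^3$ be a reflection with $r(K) = \overline{K}$, giving a homeomorphism $\phi = r|_{E(K)} \colon E(K) \to E(\overline{K})$. First I would recall the two inputs already available. On the quantum side, mirroring sends $q \mapsto q^{-1}$, so $J_{\overline{K},n}(q) = J_{K,n}(q^{-1})$ and $d_+[J_{\overline{K},n}(q)] = -d_-[J_{K,n}(q)]$; this is the recalled relation $\delta_{\overline{K}}(n) = -\delta^*_K(n)$, which unpacks to $\overline{a}(n) = -a^*(n)$ and $\overline{b}(n) = -b^*(n)$, and hence $js(\overline{K}) = -js^*(K)$. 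On the topological side, since $\phi$ is orientation-reversing it fixes the meridian and reverses the longitude on $\bdry E(K)$ (up to overall sign), so it negates boundary slopes: a slope $p/q$ is carried to $-p/q = (-p)/q$, leaving the positive denominator unchanged. Because $\phi$ is a homeomorphism it also carries essential surfaces to essential surfaces while preserving both $\chi$ and $|\bdry F|$. In particular $bs(\overline{K}) = -bs(K)$.

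With these in hand the Slope Conjecture half is immediate: the statement $js(\overline{K}) \subset bs(\overline{K})$ reads $-js^*(K) \subset -bs(K)$, which holds if and only if $js^*(K) \subset bs(K)$, i.e. if and only if $K$ satisfies the Slope Conjecture for the minimum degree.

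For the Strong Slope Conjecture I would argue by transporting Jones surfaces. Suppose $p^*/q^* = 4a^*(n)$ (with $q^* > 0$) satisfies $SS^*(n)$ for $K$, witnessed by an essential $F^*_n \subset E(K)$ with boundary slope $p^*/q^*$ and $2b^*(n) = -\chi(F^*_n)/(|\bdry F^*_n|q^*)$. Set $\overline{F}_n = \phi(F^*_n) \subset E(\overline{K})$. Then $\overline{F}_n$ is essential, has $\chi(\overline{F}_n) = \chi(F^*_n)$ and $|\bdry \overline{F}_n| = |\bdry F^*_n|$, and has boundary slope $-p^*/q^* = 4\overline{a}(n)$ with positive denominator $\overline{q} = q^*$. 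Then
\[ \frac{\chi(\overline{F}_n)}{|\bdry \overline{F}_n|\,\overline{q}} = \frac{\chi(F^*_n)}{|\bdry F^*_n|\,q^*} = -2b^*(n) = 2\overline{b}(n), \]
which is exactly the condition that $4\overline{a}(n)$ satisfy $SS(n)$ for $\overline{K}$. Running the identical argument through $\phi^{-1}$ gives the converse, so $\overline{K}$ satisfies the Strong Slope Conjecture for the maximum degree if and only if $K$ satisfies it for the minimum degree.

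The only genuinely delicate point, and the one I would write out most carefully, is the sign bookkeeping. The defining formula for $SS^*(n)$ carries a minus sign in front of $\chi/(|\bdry F|q)$ while the one for $SS(n)$ does not, and independently $\overline{b}(n) = -b^*(n)$; the computation above works precisely because these two sign reversals cancel. I would therefore verify the orientation conventions on $\phi$ in detail, confirming that it negates slopes while leaving the positive denominator intact (so that $\overline{q} = q^*$ rather than $-q^*$), since any slip there would break the matching of the linear-term conditions.
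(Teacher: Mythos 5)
Your proposal is correct and follows essentially the same route as the paper's proof: both transport Jones surfaces through the mirror homeomorphism, use $\delta_{\overline{K}}(n) = -\delta^*_K(n)$ (hence $\overline{a}(n)=-a^*(n)$, $\overline{b}(n)=-b^*(n)$) together with slope negation and invariance of $\chi$ and $|\bdry F|$, and the sign cancellation you flag as the delicate point is exactly the computation the paper carries out. The only cosmetic difference is that you verify the Slope Conjecture half separately via $bs(\overline{K})=-bs(K)$, whereas the paper gets it simultaneously from the transported Jones surface realizing the Jones slope.
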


\begin{proof}
Assume that $K$ satisfies the Slope Conjecture and the Strong Slope Conjecture for minimum degree. 
Pick a Jones slope $\overline{p}/\overline{q} \in js(\overline{K})= - js^*(K)$. 
Then the Jones slope (for minimum degree) $(-\overline{p})/\overline{q} \in js^*(K)$ satisfies $SS^*(n)$ for some $n$, 
i.e. there exists an essential surface $F^*_n \subset E(K)$ such that 
the boundary slope of $F^*_n$ is $4a^*(n) = (-\overline{p})/\overline{q}$ and $2b^*(n) = -\frac{\chi(F^*_n)}{|\partial F^*_n| \overline{q}}$. 
Let us take the mirror images of $K$ and $F^*_n \subset E(K)$ to obtain $\overline{K}$ and $\overline{F^*_n} \subset E(\overline{K})$. 
Then the boundary slope of $\overline{F^*_n}$ is $-4a^*(n) = 4\overline{a}(n) = \overline{p}/\overline{q}$ and 
$2\overline{b}(n) = 2(-b^*(n)) = \frac{\chi(F^*_n}{| \partial F^*_n| \overline{q}} 
=  \frac{\chi(\overline{F^*_n})}{| \partial \overline{F^*_n}| \overline{q}}$. 
Thus the Jones slope $\overline{p}/\overline{q} \in js(\overline{K})$ satisfies $SS(n)$ and, 
by definition 
$\overline{K}$ satisfies the Slope Conjecture and the Strong Slope Conjecture. 

Conversely, suppose that $\overline{K}$ satisfies the Slope Conjecture and the Strong Slope Conjecture. 
Pick a Jones slope $p^*/q^* \in js^*(K)$ for minimum degree. 
Then the Jones slope $(-p^*)/q^* \in -js^*(K) = js(\overline{K})$ satisfies $SS(n)$ for some $n$, 
i.e. we have an essential surface $\overline{F_n} \subset E(\overline{K})$ such that 
the boundary slope of $\overline{F_n}$ is $4\overline{a}(n) = (-p^*)/q^*$ and 
$2\overline{b}(n) = \frac{\chi(\overline{F_n})}{|\partial \overline{F_n}| \overline{q}}$. 
Take the mirror images of $\overline{K}$ and $\overline{F_n} \subset E(\overline{K})$ 
to obtain $K$ and $F_n \subset E(K)$. 
Then the boundary slope of $F_n$ is $-4\overline{a}(n) = 4a^*(n) = p^*/q^*$ and 
$-2b^*(n) = 2\overline{b}(n) =  \frac{\chi(\overline{F_n})}{|\partial \overline{F_n}| q^*} 
= \frac{\chi(F_n)}{|\partial F_n| q^*}$. 
This means that the Jones slope $p^*/q^*$ satisfies $SS(n)$,  
and hence $K$ satisfies the Slope Conjecture and the Strong Slope Conjecture for minimum degree. 
\end{proof}

Now Theorem~\ref{strong_slope_conjecture_min_deg}
can be rephrased as 

\begin{corollary}
\label{strong_slopeconjectures_mirror}
Let $K$ be a knot in $S^3$ which satisfies the condition in Theorem~\ref{strong_slope_conjecture_min_deg}. 
Then $\overline{M(K)}$ also satisfies the Slope Conjecture and the Strong Slope Conjecture. 
\end{corollary}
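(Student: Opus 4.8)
The plan is to deduce Corollary~\ref{strong_slopeconjectures_mirror} as a purely formal consequence of the two results immediately preceding it, namely Theorem~\ref{strong_slope_conjecture_min_deg} and Lemma~\ref{mirror}, without reopening any topology or any degree computation. The essential observation is that the corollary is just the mirror-image restatement of the theorem, so the work consists entirely in chaining together the equivalence ``minimum-degree conjectures for $K$ $\Leftrightarrow$ maximum-degree conjectures for $\overline{K}$'' supplied by Lemma~\ref{mirror}.

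First I would invoke the hypothesis: $K$ satisfies the conditions of Theorem~\ref{strong_slope_conjecture_min_deg}, in particular $d_-[J_{K,n}(q)] = \delta^*_K(n)$ for all $n \ge 2$, together with $b^*_i \ge 0$, the implication $b^*_i = 0 \Rightarrow a^*_i \ne 0$, and $a^*_1 + b^*_1 + c^*_1 \ge 0$. Since adequate (or alternating, or torus) knots satisfy the Slope and Strong Slope Conjectures for minimum degree by Remark~\ref{min_condition} and Example~\ref{earlyexamples}, I would note that $K$ itself satisfies both conjectures for the minimum degree. Then Theorem~\ref{strong_slope_conjecture_min_deg}(1) and (2) upgrade this to the statement that the Mazur double $M(K)$ satisfies the Slope Conjecture and the Strong Slope Conjecture for the minimum degree.

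Next I would apply Lemma~\ref{mirror} to the knot $M(K)$ in place of $K$. The lemma asserts exactly that a knot satisfies the Slope Conjecture and the Strong Slope Conjecture for the minimum degree if and only if its mirror image satisfies the (maximum-degree) Slope Conjecture and Strong Slope Conjecture. Applying this equivalence to $M(K)$, and using the fact recorded in the excerpt that the mirror of the Mazur double is $\overline{M(K)}$ (whose companion is $\overline{K}$), we conclude that $\overline{M(K)}$ satisfies the Slope Conjecture and the Strong Slope Conjecture. This is precisely the assertion of the corollary.

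The only subtlety, and the step I would be most careful about, is the bookkeeping of signs and of which knot plays the role of ``$K$'' in each cited statement: Theorem~\ref{strong_slope_conjecture_min_deg} is phrased with hypotheses on the minimum-degree data $a^*_i, b^*_i, c^*_i$ of $K$, whereas Lemma~\ref{mirror} converts minimum-degree behavior of one knot into maximum-degree behavior of its mirror via the identities $\delta_{\overline{K}}(n) = -\delta^*_K(n)$ and $js(\overline{K}) = -js^*(K)$. I would therefore state explicitly that the output of Theorem~\ref{strong_slope_conjecture_min_deg} is a minimum-degree statement about $M(K)$, and that Lemma~\ref{mirror} (read with $M(K)$ substituted for its $K$) is what transports this to the desired maximum-degree statement about $\overline{M(K)}$. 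There is no genuine obstacle here beyond this clean matching of hypotheses; the corollary is a formal rephrasing, which is exactly why the excerpt introduces it with the words ``can be rephrased as.''
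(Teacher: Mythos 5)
Your overall route is exactly the paper's: apply Theorem~\ref{strong_slope_conjecture_min_deg} to conclude that $M(K)$ satisfies the Slope Conjecture and the Strong Slope Conjecture for the minimum degree, then apply Lemma~\ref{mirror} with $M(K)$ in the role of the knot to transport this to the (maximum-degree) Slope and Strong Slope Conjectures for $\overline{M(K)}$. That two-step chain is the paper's entire proof, and your attention to which knot plays the role of ``$K$'' in Lemma~\ref{mirror} is exactly the right bookkeeping.

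However, one intermediate step in your write-up is a non sequitur: you justify the claim that ``$K$ itself satisfies both conjectures for the minimum degree'' by citing Remark~\ref{min_condition} and Example~\ref{earlyexamples}, i.e.\ by the fact that adequate, alternating, and torus knots satisfy them. The corollary's $K$ is not assumed to be adequate, alternating, or a torus knot; it is an arbitrary knot subject to the degree hypotheses of Theorem~\ref{strong_slope_conjecture_min_deg}, and Remark~\ref{min_condition} only goes in the direction ``those families satisfy the hypotheses,'' not the converse. The fact you are trying to derive should instead be read as part of the hypothesis: parts (1) and (2) of Theorem~\ref{strong_slope_conjecture_min_deg} are conditionals whose antecedents are ``$K$ satisfies the Slope (resp.\ Strong Slope) Conjecture for the minimum degree,'' so the phrase ``satisfies the condition in Theorem~\ref{strong_slope_conjecture_min_deg}'' must be understood to include those antecedents --- this is how the paper's own proof tacitly reads it, since otherwise the theorem's conclusion about $M(K)$ cannot be invoked at all. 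Deleting your adequacy argument and appealing directly to the hypothesis repairs the proof; nothing else needs to change.
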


\begin{proof}
Theorem~\ref{strong_slope_conjecture_min_deg} asserts that 
$M(K)$ satisfies the Slope Conjecture and the Strong Slope Conjecture for the minimum degree. 
By Lemma~\ref{mirror} $\overline{M(K)}$ satisfies the Slope Conjecture and the Strong Slope Conjecture. 
\end{proof}

\begin{conjecture}
For any knot in $S^3$, 
$a(n) \ge 0$ and $a^*(n) \le 0$.
\end{conjecture}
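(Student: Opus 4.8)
The plan is to first exploit the mirror symmetry recorded just before Lemma~\ref{mirror} to collapse the two inequalities into one. Since $J_{\overline{K},n}(q)=J_{K,n}(q^{-1})$ we have $d_+[J_{\overline{K},n}(q)]=-d_-[J_{K,n}(q)]$, hence $\delta_{\overline{K}}(n)=-\delta^*_K(n)$ and in particular $\overline{a}(n)=-a^*(n)$. Because $K\mapsto\overline{K}$ is a bijection on the set of knots, the assertion ``$a^*(n)\le 0$ for every knot'' is equivalent to ``$\overline{a}(n)\ge 0$ for every knot,'' which is exactly ``$a(n)\ge 0$ for every knot.'' Thus it suffices to prove the single inequality $a(n)\ge 0$ for all $K\subset S^3$, which is precisely Question~\ref{a_i<0} promoted from a single Jones slope to all of $js(K)$.

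To attack $a(n)\ge 0$ I would seek a lower bound on the maximum degree $d_+[J_{K,n}(q)]$ whose quadratic term is non-negative. Fix a diagram $D$ of $K$, cable it with the color-$n$ Jones--Wenzl idempotent, and expand via the Kauffman bracket skein relation. The extremal (all-$B$) Kauffman state of the cabled diagram contributes a degree that is an explicit quadratic function of $n$ with non-negative leading coefficient; when $D$ is $B$-adequate this state is not cancelled and realizes $d_+$, which recovers the bound $a\ge 0$ cited from \cite[Lemma 6]{FKP}. For a general diagram the task is to show that this top contribution survives, i.e.\ to produce a diagram or skein-theoretic normal form in which cancellation does not lower the leading term. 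A complementary route is to prove that the integer sequence $d_+[J_{K,n}(q)]$ is eventually convex (non-negative second differences); once it agrees with a quadratic quasi-polynomial this forces a non-negative quadratic coefficient, cleanest when the period is $1$. Such convexity might be extracted from the $q$-holonomic recursion governing $J_{K,n}$, or from the formulas describing how $d_+$ changes under adding full twists.

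The main obstacle is exactly the regime where adequacy fails: in a non-adequate diagram the all-$B$ state can be annihilated by lower states, and no universal lower bound on $d_+$ is presently available. Proving $a(n)\ge 0$ unconditionally would settle Question~\ref{a_i<0}, which the authors flag as open with no known counterexample, so the genuine content lies here. Note moreover that even granting the Slope Conjecture gives no help with the sign: $4a(n)\in bs(K)$ only places the number among the boundary slopes, and boundary slopes of general knots take both signs, so positivity must come from a different input. I expect the crux to be either (i) a geometric positivity statement identifying the Jones surface realizing $4a(n)$ as one whose slope cannot be negative, for instance by tying it to a spanning or Seifert-type surface, or (ii) a purely combinatorial argument bounding cancellation in the state sum for arbitrary diagrams. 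A secondary difficulty is that the statement is phrased for the periodic function $a(n)$, whereas no knot with period greater than $1$ is known; a complete argument should either treat arbitrary period directly or first establish that the period is $1$.
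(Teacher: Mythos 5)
The statement you were asked to prove is labeled a \emph{conjecture} in the paper, and the paper does not prove it: it is open. What the paper offers immediately after the statement is only a consistency observation, namely that since $\delta_K(n) \ge \delta^*_K(n)$ for all sufficiently large $n$ (the maximum degree dominates the minimum degree), one gets $a(n) \ge a^*(n)$, so the two inequalities cannot fail ``in opposite directions'': any counterexample knot must have both $a(n), a^*(n) > 0$ or both $a(n), a^*(n) < 0$. So there is no paper proof for your attempt to be measured against, and your proposal is likewise not a proof --- which you correctly and explicitly acknowledge.

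That said, your first paragraph is a correct partial result, and it is complementary to the paper's remark rather than a repetition of it: mirroring gives $\delta_{\overline{K}}(n) = -\delta^*_K(n)$ (a fact the paper itself records just before Lemma~\ref{mirror}), hence $\overline{a}(n) = -a^*(n)$, so the conjecture is equivalent to the single inequality $a(n) \ge 0$ for every knot, i.e.\ to a negative answer to Question~\ref{a_i<0}. Combining your reduction with the paper's observation, one sees the conjecture holds if and only if no knot has a negative Jones slope. Beyond this reduction, however, your sketch carries no proof content: the all-$B$ state-sum argument only recovers the known bound for $B$--adequate knots (\cite[Lemma 6]{FKP}, already cited in the paper), and both of your proposed routes --- controlling cancellation of extreme states in non-adequate diagrams, or establishing eventual convexity of $d_+[J_{K,n}(q)]$ from the $q$-holonomic recursion --- are themselves the open problems, not steps you can presently execute. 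The honest summary is that your reduction is correct and worth stating, the remainder is a research program, and the statement remains a conjecture for both you and the authors.
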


Actually, if $a(n) < 0$, and $a^*(n) > 0$, 
then for sufficiently large $n$, 
$\delta_K(n) = a(n)n^2 + b(n) n + c(n)$ is strictly smaller than 
$\delta^*_K(n) = a^*(n)n^2 + b^*(n) n + c^*(n)$. 
This contradicts $\delta_K(n)$ is the maximum degree.
Hence such a knot should have $a(n),\  a^*(n) > 0$ or $a(n),\  a^*(n) < 0$.

\subsection{Crossing numbers of Mazur doubles}
Kalfagianni and Ruey Shan Lee have explicitly determined
the crossing number of untwisted Whitehead doubles of a nontrivial adequate knots with trivial writhe \cite{K-RSL}. 
Precisely, they show that $c(W_{\pm 1}^0(K)) = 4c(K) + 1$ for any nontrivial adequate knot with $wr(K) = 0$.  
This is the first instance of results that determine the crossing numbers for broad families of prime satellite knots.

Following their work, we use our calculations of the colored Jones polynomials of Mazur doubles to give strong constraints on their crossing numbers.  
\begin{theorem}
\label{crossing_Mazur}
Let $K$ be an adequate knot with crossing number $c(K)$ and the writhe $wr(K)$. 
Then we have the following. 
\begin{enumerate}
\item 
The Mazur double of $K$ is non-adequate. 
\item 
Assume that $c_+(K),\ c_-(K) \ne 0$. 
Then the crossing number of Mazur double of $K$ satisfies:
\[
9c(K) + 2 \le  c(M(K)) \le 9c(K)+3 + 6|wr(K)|.
\]
\item
If $wr(K) = 0$, 
then $c(M(K))$ is either $9c(K) + 2$ or $9c(K) + 3$. 
\end{enumerate}
\end{theorem}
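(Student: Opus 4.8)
The plan is to prove the three parts in the order (2)–lower bound, (2)–upper bound, (1), (3), since the lower bound and part (1) both rest on the colored Jones degree data for $M(K)$ already assembled in Proposition~\ref{maxdeg_Mazur} and Theorem~\ref{strong_slope_conjecture_min_deg}, whereas the upper bound is a diagrammatic construction and (3) is the specialization $wr(K)=0$ of (2). For the lower bound the engine is the Jones diameter inequality of Kalfagianni and Lee \cite{K-RSL}: writing $\mathrm{jd}(J) = \max js(J) - \min js^{*}(J)$ for the Jones diameter of a knot $J$, one has $\mathrm{jd}(J) \le 2\,c(J)$, with equality whenever $J$ is adequate. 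Since $K$ is adequate, equality holds for $K$; writing $a, a^{*}$ for the leading coefficients of $\delta_K$ and $\delta^{*}_K$ (both of period $1$ for an adequate knot), this reads $\mathrm{jd}(K) = 4(a-a^{*}) = 2\,c(K)$, so $c(K) = 2(a - a^{*})$. I would then read off the leading coefficients of $\delta_{M(K)}$ and $\delta^{*}_{M(K)}$ from Proposition~\ref{maxdeg_Mazur} and Theorem~\ref{strong_slope_conjecture_min_deg}; the hypotheses $c_{+}(K), c_{-}(K) \ne 0$ are exactly what make the relevant Sign Condition hold (Proposition~\ref{Normalized Sign Condition}) and keep these leading terms non-degenerate. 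The resulting computation should give $\mathrm{jd}(M(K)) = 18\,c(K) + 3$, the factor $9 = \wrap_V(k)^{2}$ arising from cabling and the additive $3$ from the Mazur pattern. Applying the diameter inequality to $M(K)$ and using that $c(M(K))$ is an integer then yields $c(M(K)) \ge \tfrac{1}{2}\,\mathrm{jd}(M(K)) = 9\,c(K) + \tfrac{3}{2}$, hence $c(M(K)) \ge 9\,c(K) + 2$.

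For the upper bound I would build an explicit diagram of $M(K)$ from a reduced adequate diagram $D$ of $K$ having $c(K)$ crossings. Thickening $D$ to an annulus carrying its blackboard framing and inserting the annular diagram of the Mazur pattern of Figure~\ref{fig:Mazur_double} produces $M(K)$. Since $\wrap_V(k) = 3$, three strands run along $D$, so each crossing of $D$ becomes a $3 \times 3$ block of $9$ crossings, contributing $9\,c(K)$ in total. Because the blackboard framing of $D$ equals $wr(K)$ while the faithful embedding requires the $0$–framing, one inserts $|wr(K)|$ full twists on the three strands; each full twist on three strands contributes $3 \cdot 2 = 6$ crossings, for $6\,|wr(K)|$. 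Finally the crossings intrinsic to the Mazur pattern (those not coming from the three parallel strands) contribute $3$. Summing gives $c(M(K)) \le 9\,c(K) + 3 + 6\,|wr(K)|$.

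For the non-adequacy in (1) I would argue via extreme coefficients, as this is robust and needs no restriction on $c_{\pm}(K)$. An adequate knot is both $A$– and $B$–adequate, and for such knots the trailing (respectively leading) coefficient of every $J_{K,n}(q)$ has absolute value $1$. From the explicit formula for $J_{M(K),n}(q)$ I would identify the relevant extreme coefficient and exhibit, for a suitable $n$, absolute value strictly greater than $1$; hence $M(K)$ is neither $A$– nor $B$–adequate, so it is non-adequate. (Under the hypotheses of (2) one may argue more cheaply from parity: $\mathrm{jd}(M(K)) = 18\,c(K) + 3$ is odd, whereas an adequate knot has even diameter $2\,c$.) Finally, part (3) is the case $wr(K) = 0$ of (2): then $c_{+}(K) = c_{-}(K) \ne 0$ automatically, the lower bound gives $9\,c(K) + 2$ and the upper bound gives $9\,c(K) + 3$, so $c(M(K)) \in \{\,9\,c(K)+2,\ 9\,c(K)+3\,\}$.

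The main obstacle is the precise determination of the additive constant in $\mathrm{jd}(M(K)) = 18\,c(K) + 3$: the leading factor $9$ is forced by the wrapping number, but the constant $3$ — equivalently the half-integer $\tfrac{3}{2}$ whose rounding produces the sharp lower bound $9\,c(K)+2$ — depends on a careful reading of the max– and min–degree quasi-polynomials of the Mazur satellite and on the Sign Condition, which is precisely where $c_{\pm}(K) \ne 0$ is used. A parallel delicacy is the crossing count in the upper bound, in particular the framing-correction term $6\,|wr(K)|$ and the tally of the pattern's intrinsic crossings; an off-by-one there would close or widen the gap $\{+2, +3\}$. Notably, $\tfrac{1}{2}\,\mathrm{jd}(M(K)) = 9\,c(K) + \tfrac{3}{2}$ is not an integer, so it falls strictly below the integer $c(M(K))$; this is exactly why — unlike the Whitehead case of \cite{K-RSL}, where the two bounds coincide — the crossing number is only localized to a gap of one rather than computed exactly.
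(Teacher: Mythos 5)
Your upper bound is essentially the paper's argument (nine crossings for each crossing of $D$, three crossings intrinsic to the pattern, and six crossings per writhe-correcting full twist, obtained by the same belt-trick count), and that part is fine. The lower bound, however, rests on an arithmetic error that breaks the whole argument. The Jones diameter of $M(K)$ is $18c(K)+2$, not $18c(K)+3$: from Proposition~\ref{maxdeg_Mazur} the max-degree Jones slope of $M(K)$ is $4\cdot 9a = 36a$, while from Proposition~\ref{mindeg_Mazur} (in the branch $a^* < -\tfrac{1}{12}$, which is where adequacy with $c_-(K)\neq 0$ places you, since $a^* = -c_-(D)/2 \le -\tfrac12$; this, rather than the Sign Condition, is the real role of the hypothesis $c_\pm(K)\neq 0$) the min-degree Jones slope is $4\bigl(9a^*-\tfrac12\bigr) = 36a^*-2$. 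Hence $dj_{M(K)} = 36a-(36a^*-2) = 9\cdot 4(a-a^*)+2 = 9\,dj_K+2 = 18c(K)+2$, which is \emph{even}. Consequently: (i) the Kalfagianni--Lee inequality $dj_{M(K)} \le 2c(M(K))$ plus integrality yields only $c(M(K)) \ge 9c(K)+1$ --- there is no half-integer to round up; and (ii) your parenthetical ``cheap parity'' proof of non-adequacy (odd diameter versus even $2c$) is void.

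Because of this, the sharp bound $9c(K)+2$ cannot be extracted from the diameter inequality alone; one needs the strictness clause of \cite[Theorem 1.1]{K-RSL}, namely that $dj < 2c$ for \emph{non-adequate} knots. So part (1) must be proved \emph{first} and then used to upgrade $c(M(K)) \ge 9c(K)+1$ to $c(M(K)) > 9c(K)+1$; your proposed order (lower bound before non-adequacy) cannot be carried out. Moreover, your sketch of (1) via extreme coefficients is both undeveloped and risky: by Addendum~\ref{B_adequate_Mazur_B_adequate}, $M(K)$ can actually be $B$-adequate (e.g.\ when $wr(D)=0$), so the leading coefficient is $\pm 1$ there, and failure of $A$-adequacy does not by itself force a trailing coefficient of modulus greater than $1$, so the coefficient you need to exhibit may not exist. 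The paper's Lemma~\ref{MazurDoubleNotAadequate} argues instead from degrees: $A$-adequacy of $M(K)$ would force $d_-[J_{M(K),n}(q)]$ to agree with its quadratic quasi-polynomial for all $n\ge 1$, hence to vanish at $n=1$ (coefficient sum $0$, since $J_{M(K),1}(q)=1$), whereas Proposition~\ref{mindeg_Mazur} gives coefficient sum $a^*+b^*+c^*+\tfrac12 = \tfrac12$ when $c_-(D)>0$ (and a non-integer value in the degenerate case $c_-(D)=0$), a contradiction. With that lemma in hand, your part (3) specialization is correct as stated.
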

This is proven in Section~\ref{sec:crossing}.

\begin{example}
\label{fig-eight}
Let $K$ be the figure-eight knot, whose minimal diagram with $c(K) = 4$ is alternating (hence adequate) and $wr(K) = 0$. 
So Theorem~\ref{crossing_Mazur} shows that the crossing number of its Mazur double is either $38$ or $39$. 
\end{example}

\bigskip

\subsection{Acknowledgments}
KLB was partially supported by a grant from the Simons Foundation (grant \#523883  to Kenneth L.\ Baker).
KM was partially supported by JSPS KAKENHI Grant Number JP19K03502, 21H04428, 
and Joint Research Grant of Institute of Natural Sciences at Nihon University for 2021.

\bigskip
\section{Colored Jones polynomials of Mazur doubles and preliminary degree computations}  
\label{Jones_generalized_M}

The goal of this section is to establish the common foundations for the proofs of Propositions~\ref{maxdeg_Mazur} and \ref{mindeg_Mazur} which give the maximum and minimum degrees of the colored Jones function of a Mazur double of a knot $K$ under the hypotheses of 
Theorems~\ref{thm:SC_and_SSC_M} and \ref{strong_slope_conjecture_min_deg}.

\medskip

\subsection{Normalizations and their transition.}
\label{transition}
For knot $K$ and a nonnegative integer $n$, set
\[J'_{K, n}(q):=\frac{J_{K, n+1}(q)}{J_{\bigcirc, n+1}(q)}\] 
so that $J'_{\bigcirc, n}(q)=1$ for the unknot $\bigcirc$ and 
$J'_{K, 1}(q)$ is the ordinary Jones polynomial of a knot $K$.

To derive Proposition~\ref{maxdeg_Mazur} 
from Proposition~\ref{maxdeg_Mazur_normalized} 
we apply the transformation with respect to normalization. 
Recall from the Introduction that $J_{\bigcirc, n+1}(q) = [n+1] = (-1)^{n}\langle n \rangle$ (where $[n+1]$ and $\langle n \rangle$ are defined in Equation~(\ref{def:<n>})). 
Then, in general, we have 
\begin{equation}
\label{KT-G-relation.formula}
\langle n \rangle J'_{K, n}(q) 
= \langle n \rangle \frac{J_{K, n+1}(q)}{J_{\bigcirc, n+1}(q)}
= \langle n \rangle  \frac{J_{K, n+1}(q)}{(-1)^n \langle n \rangle}
= (-1)^n J_{K, n+1}(q),\ J'_{K, 0}(q) =1,\ J_{K, 1}(q) =1. 
\end{equation}

This implies: 
\[
d_{\pm}\left[ \frac {q^{(n+1)/2}-q^{-(n+1)/2}}{q^{1/2}-q^{-1/2}}\right]+ d_{\pm} [J'_{K, n}(q)] = 
d_{\pm} [ J_{K, n+1}(q)]. 
\]
Since $\displaystyle d_{\pm}\left[ \frac {q^{(n+1)/2}-q^{-(n+1)/2}}{q^{1/2}-q^{-1/2}}\right] 
= \pm \frac{n}{2}$, 
we have
\[
\delta'_K(n) = \delta_K(n+1) - \frac{1}{2}n, \quad 
\delta'^*_K(n) = \delta^*_K(n+1) + \frac{1}{2}n
\]
and thus  
\[
\delta_{K}(n)=\delta'_{K}(n-1)+\frac{1}{2}n-\frac{1}{2}, 
\quad
\delta^*_{K}(n)=\delta'^*_{K}(n-1)-\frac{1}{2}n+\frac{1}{2}.
\]

Note also that 
comparing the coefficients of the quadratic quasipolynomial $\delta_K$ and its normalization $\delta'_K$ for which
\begin{align*}
\alpha (n) n^2 + \beta (n) n + \gamma(n) 
&= \delta'_K(n)\\
& = \delta_K(n+1)-\frac{1}{2}n \\
&= a(n+1) n^2 + b(n+1) n + c(n+1)  -\frac{1}{2}n,
\end{align*}
since we are assuming their periods are at most $2$, we have
\begin{center}
\begin{tabular}{lll}
 $\alpha_0=a_1$, & $\beta_0=2a_1+b_1-\frac{1}{2}$, & $\gamma_0=a_1+b_1+c_1$,\\
 $\alpha^*_0=a^*_1$, & $\beta^*_0=2a^*_1+b^*_1+\frac{1}{2}$, &$\gamma^*_0=a^*_1+b^*_1+c^*_1$\\
 \\

$\alpha_1=a_0$, & $\beta_1=2a_0+b_0-\frac{1}{2}$, &$\gamma_1=a_0+b_0+c_0$,\\
$\alpha^*_1=a^*_0$, &$ \beta^*_1=2a^*_0+b^*_0+\frac{1}{2}$, &$\gamma^*_1=a^*_0+b^*_0+c^*_0$, 
\end{tabular}
\end{center}
where 
\begin{center}
\begin{tabular}{llllll}
 $a_0=a(2m)$, & $b_0=b(2m)$, & $c_0=c(2m)$, &   
 $a_1=a(2m+1)$, & $b_1=b(2m+1)$, & $c_1=c(2m+1)$\\ 
 $\alpha_0=\alpha(2m)$, & $\beta_0=\beta(2m)$, & $\gamma_0=\gamma(2m)$, &  
 $\alpha_1=\alpha(2m+1)$, & $\beta_1=\beta(2m+1)$, & $\gamma_1=\gamma(2m+1)$. 
\end{tabular}
\end{center}

\bigskip

\subsection{Computations of colored Jones polynomials of $M(K)$}

Recall that $M(K)$ is the Mazur double of a knot $K$, 
which has the following diagram depicted in Figure~\ref{Mazur_double_diagram}. 
We use $D_3(K)$ to denote three parallels of a diagram of $K$ whose blackboard framing is $0$. 

\begin{figure}[!ht]
\begin{center}
\includegraphics[width=0.33\linewidth]{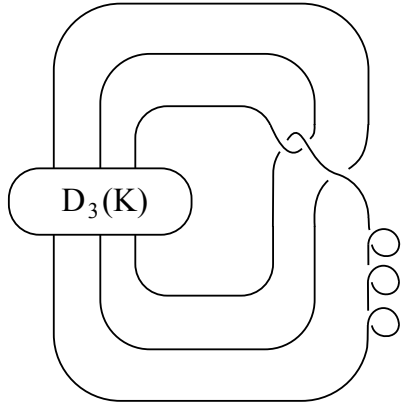}
\caption{A diagram of the Mazur double $M(K)$ of $K$ with trivial writhe}
\label{Mazur_double_diagram}
\end{center}
\end{figure}

We begin by recalling the following functions with respect to $q$ for  non-negative integers $s,t,u$. 
See \cite{MV}.

\begin{eqnarray}
\label{def:<n>}
&& \langle s \rangle  := (-1)^s [s+1], \quad [s]= \frac{q^{s/2} - q^{-s/2}}{q^{1/2} - q^{-1/2}},
\quad [s]! = \prod_{t=1}^s[t] 
\end{eqnarray}

\begin{eqnarray}
&& \langle s,t,u \rangle := (-1)^{i+j+k} 
\frac {[i+j+k+1]![i]![j]![k]!}{[s]![t]![u]!}, 
\end{eqnarray}
where $i=\frac {t+u-s}{2}$, $j=\frac {u+s-t}{2}$, and $k=\frac {s+t-u}{2}$,
\begin{eqnarray}\label{delta}
&& \delta(u;s,t):=(-1)^{\frac {s+t+u} 2} q^{-\frac{1}{8}(u^2-s^2-t^2+2u-2s-2t)},
\end{eqnarray}
and

\begin{equation}\label{tetsymb}
\left\langle \begin{array}{ccc} A & B & E \\ D & C & F \end{array}\right\rangle
=\frac {\prod_{i=1}^3 \prod_{j=1}^4 [b_i-a_j]!}
  {[A]![B]![C]![D]![E]![F]!}
  \sum_{\max\{a_j\} \le s \le \min \{b_i \}}
  \frac {(-1)^s [s+1]!} {\prod_{i=1}^3 [b_i-s]!\prod_{j=1}^4 [s-a_j]!}, 
\end{equation}
where  
$a_1=\frac {A+B+E} 2$, $a_2=\frac {B+D+F} 2$, $a_3=\frac {C+D+E} 2$, 
$a_4=\frac {A+C+F} 2$, $\Sigma=A+B+C+D+E+F$, 
$b_1=\frac {\Sigma-A-D} 2$, $b_2=\frac {\Sigma-E-F} 2$, and  
$b_3=\frac {\Sigma-B-C} 2$. 

\medskip

By definition, we have the following. 

\begin{remark}
\label{6j}
$\langle s,t,u \rangle = \langle \sigma(s), \sigma(t), \sigma(u) \rangle$ for any permutation $\sigma$, 
and  
\[\left\langle \begin{array}{ccc} A & B & E \\ D & C & F \end{array}\right\rangle
= 
\left\langle \begin{array}{ccc} E & C & D \\ F & B & A \end{array}\right\rangle\]
\end{remark}

\medskip

We will also use the following equalities introduced by Masbaum and Vogel \cite{MV}.  equalities  (refer \cite{MV}).

\begin{equation}
\label{pall}
 \bpc{(82,25)
  \put(10,-1.5){\pcr{para}{0.4}}
  \put(42,16){\nos $s$}
  \put(42,-17){\nos $t$} } 
  =\sum_u \frac {< u >}{<s,t,u>} 
  \bpc{(82,25)
  \put(0,-1){\pcr{flip-h}{0.4}}
  \put(11,29){\nos $s$}
  \put(11,-26){\nos $t$} 
   \put(38,10){\nos $u$}
   \put(65,28){\nos $s$}
  \put(64,-26){\nos $t$}} .
\end{equation}

\vskip 5truemm

Here the sum is over those colors $u$ such that the triple $(s,t,u)$ satisfies 
$s+t+u \equiv 0 \pmod 2$ and $|s-t| \le u \le s+t$. 
\begin{equation}
\label{twist-tri}
\hskip 5truemm 
 \bpc{(62,25)
  \put(3,-2){\pcr{twist-n}{0.4}}
  \put(4,20){\nos $s$}  
  \put(4,-22){\nos $t$}
   \put(55,10){\nos $u$} } \quad 
  =\delta (u; s, t)^{-1} 
  \bpc{(62,25)
  \put(0,-1){\pcr{tri}{0.4}}
  \put(6,28){\nos $s$}
  \put(6,-28){\nos $t$} 
   \put(45,10){\nos $u$}}, \quad
    \bpc{(52,25)
     \put(3,-2){\pcr{twist-p}{0.4}}
  \put(5,20){\nos $s$}  
  \put(5,-22){\nos $t$}
   \put(55,10){\nos $u$} } \quad \quad
  =\delta (u; s, t) 
  \bpc{(62,25)
  \put(0,-1){\pcr{tri}{0.4}}
  \put(6,28){\nos $s$}
  \put(6,-28){\nos $t$} 
   \put(45,10){\nos $u$}}
  \end{equation}  

\vskip 5truemm

\begin{equation}
\label{curl}
\hskip 5truemm 
 \bpc{(45,25)
  \put(3,-2){\pcr{r1m_n}{0.5}}
  \put(30,20){\nos $s$}} 
  =\delta (0; s, s)^{-1} 
  \bpc{(22,25)
  \put(0,-1){\pcr{1-string}{0.5}}
  \put(10,3){\nos $s$}}, \quad \quad 
  \bpc{(45,25)
  \put(3,-2){\pcr{r1m_p}{0.5}}
  \put(30,20){\nos $s$}} 
  =\delta (0; s, s) 
  \bpc{(82,25)
  \put(0,-1){\pcr{1-string}{0.5}}
  \put(10,3){\nos $s$}} 
\end{equation}

\vskip 0.5truecm

\begin{equation}\label{tri-open}
  \bpc{(52,25)
  \put(5,-2){\pcr{tri-open}{0.3}}
  \put(22,22){\nos $u$}
  \put(3,0){\nos $s$}
  \put(32,0){\nos $t$}
  \put(22,-22){\nos $u$}} 
  =\frac {<s,t,u>} {< u >} 
  \bpc{(82,25)
  \put(0,-1){\pcr{1-string}{0.5}}
  \put(10,3){\nos $u$}} 
\end{equation}

\vskip 1truecm

\begin{equation}\label{tetra-s}
\bpc{(42,25)
  \put(-20,15){\pcr{tetra-s}{0.3}}
  \put(-5,10){\nos $B$}
  \put(-10,-9){\nos $F$} 
   \put(15,26){\nos $A$}
   \put(23,10){\nos $E$}
  \put(10,-9){\nos $D$} 
  \put(30,-9){\nos $C$}} 
   =\frac {\left\langle \begin{array}{ccc} A & B & E \\ D & C & F \end{array}\right\rangle}
    {<A,F,C>} \quad
\bpc{(42,25)
  \put(-10,12){\pcr{tetra-s-red}{0.3}}
 \put(5,-7){\nos $F$} 
   \put(25,13){\nos $A$}
   \put(35,-7){\nos $C$} } 
\end{equation}

\medskip

\begin{lemma}
\label{lem:collapse}

\quad 

\[
		\hskip 1.75truecm 
		\bpc{(30,30)
			\put(0,0){\pcr{WD-CJP2c}{0.4}}
			\put(12,37){\nos $2k$}
			\put(11,15){\nos $n$}
			\put(11,-12){\nos $n$}
			\put(18,9){ $2j$}
			\put(35,15){\nos $n$}
			\put(35,-12){\nos $n$}
			\put(12,-34){\nos $2k$}
		}   
	\quad \quad = \quad 
		\frac {\left\langle \begin{array}{ccc} n & n & 2j \\ n & n & 2k \end{array}\right\rangle}
		{\langle n,2k,n \rangle} \quad 
		\bpc{(52,25)
			\put(5,-2){\pcr{tri-open}{0.3}}
			\put(22,22){\nos $2k$}
			\put(2,0){\nos $n$}
			\put(30,0){\nos $n$}
			\put(22,-22){\nos $2k$}}
	= \quad 
		\frac {\left\langle \begin{array}{ccc} n & n & 2j \\ n & n & 2k \end{array}\right\rangle}
		{\langle 2k \rangle} 
		\bpc{(82,25)
			\put(0,-1){\pcr{1-string}{0.5}}
			\put(10,3){\nos $2k$}} 
\]
\end{lemma}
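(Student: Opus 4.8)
The plan is to read the left-hand network as a colored tetrahedron and to collapse it in two steps, using only the tetrahedral evaluation (\ref{tetra-s}) and the bubble evaluation (\ref{tri-open}) recorded above. First I would establish the first equality. The left-hand diagram is a tetrahedral graph: four of its edges carry the color $n$, the internal edge carries $2j$, and the strand exiting at the top and at the bottom carries $2k$. Matching this against the tetrahedron on the left of (\ref{tetra-s}) under the assignment $A=B=C=D=n$, $E=2j$, $F=2k$, formula (\ref{tetra-s}) replaces the network by
\[
\frac{\left\langle \begin{array}{ccc} n & n & 2j \\ n & n & 2k \end{array}\right\rangle}{\langle A,F,C\rangle}
\]
times the reduced triangle, whose three external colors are $F=2k$, $A=n$, $C=n$. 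Since $\langle A,F,C\rangle=\langle n,2k,n\rangle$, this is exactly the middle expression, the triangle being the tri-open diagram with top and bottom colored $2k$ and the two sides colored $n$.

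Next I would establish the second equality. In the middle tri-open diagram the colors at the top and at the bottom agree (both $2k$), while the two sides are colored $n$; this is precisely the configuration of (\ref{tri-open}) with $u=2k$ and $s=t=n$. Applying (\ref{tri-open}) evaluates the tri-open as $\frac{\langle n,n,2k\rangle}{\langle 2k\rangle}$ times a single $2k$-colored strand. Substituting this into the middle expression and using the permutation symmetry $\langle n,2k,n\rangle=\langle n,n,2k\rangle$ from Remark~\ref{6j}, the two triangle factors cancel, leaving the coefficient
\[
\frac{\left\langle \begin{array}{ccc} n & n & 2j \\ n & n & 2k \end{array}\right\rangle}{\langle 2k\rangle}
\]
in front of the $2k$-strand, which is the right-hand side.

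The only genuinely delicate point is the diagrammatic bookkeeping in the first step: one must confirm that the planar embedding of the left-hand diagram really is the tetrahedron of (\ref{tetra-s}) under the stated coloring, so that the $2k$ strands occupy the slot $F$ (and the external color of the reduced triangle), the internal edge is $E=2j$, and the four $n$-strands fill $A,B,C,D$. Should the figure present the tetrahedron in a different but isotopic position, I would first normalize it using the tetrahedral symmetry recorded in the second display of Remark~\ref{6j} before invoking (\ref{tetra-s}). Once the labels are correctly identified, both equalities are direct substitutions and require no further computation.
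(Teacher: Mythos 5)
Your proposal is correct and follows exactly the paper's own proof, which simply says to apply formula (\ref{tetra-s}) and then (\ref{tri-open}); your identification of the labels ($A=B=C=D=n$, $E=2j$, $F=2k$), the application of (\ref{tri-open}) with $u=2k$, $s=t=n$, and the cancellation via the permutation symmetry $\langle n,2k,n\rangle=\langle n,n,2k\rangle$ just spell out the details the paper leaves implicit.
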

\vskip 1truecm
\begin{proof}
	Apply formula (\ref{tetra-s}) and then (\ref{tri-open}).
\end{proof}

\begin{lemma}
\label{graphicalcoloredjones}
For a $0$ framed diagram of any knot $K$:

\quad

 \[ 
 \langle n \rangle J'_{K,n}(q) = 
 \bpc{(30,30)
  \put(0,0){\pcr{calcK3}{0.4}}
  \put(12,0){ $K$}
   \put(9,37){\nos $n$}
     }
     \]
\end{lemma}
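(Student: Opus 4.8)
The plan is to read the right-hand side as the Kauffman bracket skein evaluation of the displayed $0$-framed diagram of $K$ with its strand cabled by the color $n$ (that is, decorated by the $n$-th Jones--Wenzl idempotent and closed up in the skein module of $S^3$), and then to match this evaluation against the algebraic quantity $\langle n\rangle J'_{K,n}(q)$ via the normalization relation (\ref{KT-G-relation.formula}). First I would recall that, by the standard Masbaum--Vogel construction of the colored Jones invariant, the scalar in $\mathbb{Z}[q^{\pm 1/2}]$ obtained from a $0$-framed knot diagram colored by $n$ is exactly the unnormalized colored Jones invariant at color $n$; applied to the $0$-framed unknot it returns the $n$-colored loop value $\langle n\rangle = (-1)^n[n+1]$ of (\ref{def:<n>}). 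Dividing by this loop value produces the normalized invariant $J'_{K,n}(q)$, so the displayed graphical quantity is $\langle n\rangle$ times the normalized invariant, which is the assertion once conventions are identified.

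The single point requiring care is the dictionary between the skein variable and $q$ together with the sign $(-1)^n$, and I would pin it down by comparison with (\ref{KT-G-relation.formula}). Since $J_{\bigcirc,n+1}(q)=[n+1]=(-1)^n\langle n\rangle$, that relation gives $\langle n\rangle J'_{K,n}(q)=(-1)^n J_{K,n+1}(q)$, so it suffices to check that the graphical evaluation equals $(-1)^n J_{K,n+1}(q)$. The factor $(-1)^n$ is precisely the discrepancy between the loop value $[n+1]$ implicit in the $J_{K,n}$ normalization of the Introduction and the loop value $\langle n\rangle=(-1)^n[n+1]$ built into the bracket conventions used in (\ref{def:<n>})--(\ref{tetra-s}). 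Because the graphical evaluation defines a knot invariant that is uniquely determined by its loop value and framing, matching these on the unknot forces agreement with $(-1)^n J_{K,n+1}(q)$ for every knot $K$, so no case-by-case computation is needed.

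Finally I would confirm that no framing anomaly intervenes. The diagram is taken with blackboard framing $0$, which is the Seifert ($0$) framing, so the evaluation carries no extra twist factor of the type $\delta(0;n,n)^{\pm 1}$ appearing in (\ref{curl}); a nonzero writhe would contribute such a power, but it is absorbed into the stipulation that the diagram be $0$-framed. With the loop value, the variable, and the framing all matched, the graphical quantity is literally the unnormalized colored Jones invariant of $K$ at color $n$, and the lemma follows. I expect the only genuine obstacle to be this convention bookkeeping---the $(-1)^n$ sign and the framing normalization---after which the statement reduces to the definition and requires no further calculation.
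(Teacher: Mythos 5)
Your proposal follows essentially the same route as the paper: read the right-hand side as the skein-theoretic evaluation of the $0$-framed diagram cabled by the $n$-th Jones--Wenzl idempotent, identify this evaluation with the colored Jones invariant via the Masbaum--Vogel construction, and then conclude from the normalization relation (\ref{KT-G-relation.formula}), i.e.\ $\langle n\rangle J'_{K,n}(q)=(-1)^n J_{K,n+1}(q)$. The paper's proof is exactly this two-step argument: it cites \cite[Section~5]{Mas} for the statement that the graphical evaluation equals $(-1)^n J_{K,n+1}(q)$, and then applies (\ref{KT-G-relation.formula}).

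One step of your write-up does not hold up as literally stated: the claim that ``the graphical evaluation defines a knot invariant that is uniquely determined by its loop value and framing.'' Knot invariants are not determined by their value on the unknot (together with framing behavior), so matching the two quantities on the unknot cannot, by itself, force agreement on all knots. What actually closes this step is that the graphical evaluation and $(-1)^n J_{K,n+1}(q)$ arise from the same skein/ribbon construction applied to the same colored diagram, so they agree identically; this identification, \emph{including} the sign $(-1)^n$, is precisely the content of the result you and the paper both cite (Masbaum, \cite[Section~5]{Mas}), not something recovered from unknot-matching. Since your first paragraph already invokes that construction, the repair is immediate: take the identification with its sign directly from the reference rather than deriving the sign by the uniqueness heuristic.
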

\vskip 1truecm
\begin{proof}
It follows from \cite[Section~5]{Mas} that the right hand side describes 
$(-1)^{n}J_{K, n+1}(q)$.  
On the other hand, 
(\ref{KT-G-relation.formula}) shows that 
\[
\langle n \rangle  J'_{K, n}(q) 
= (-1)^n J_{K, n+1}(q).
\]
Thus we obtain the desired equality.
\end{proof}

\vskip 0.5truecm

Now we are ready to compute the normalized colored Jones polynomial $J_{M(K), n}'(q)$. 

\begin{proposition}[\textbf{Normalized colored Jones polynomial of Mazur doubles}]  
\label{CJP_M(K)}

\begin{equation}\label{doublesum}
 J'_{M(K), n}(q) = \frac 1 {\langle n \rangle} \sum_{\substack{j,k=0 \\ j\le 2k}}^n \left( \sum_{\substack{|2k-n| \le l \le 2k+n \\ n+l:even}}
g(j,k,l;q) \right)
\end{equation}
where we define
\begin{multline}
g(j,k,l;q) := \\
\frac {\langle 2k \rangle}{\langle n,n,2k\rangle} \frac {\langle 2j\rangle}{\langle n,n,2\rangle} 
\frac { \left\langle 
\begin{array}{ccc} 2k & 2k & 2j \\ n & n & n \end{array}\right\rangle}
{\langle 2k,2k,2j \rangle}
\frac{ \delta(2j;n,n)^2 }{\delta(2k;n,n)} q^{-\frac{3}{4} n(n+2)} 
 \frac {\langle l \rangle}{\langle2k,n,l\rangle}
\left\langle 
\begin{array}{ccc} 2k & 2k & 2j \\ n & n & l \end{array}\right\rangle
J'_{K,l}(q).
\end{multline} 
\end{proposition}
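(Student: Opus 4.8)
The plan is to compute $\langle n\rangle J'_{M(K),n}(q)$ directly from the diagram of $M(K)$ in Figure~\ref{Mazur_double_diagram} by the Kauffman bracket recoupling calculus of Masbaum and Vogel, reducing the $n$-colored Mazur tangle to a $\Z[q^{\pm 1/2}]$-linear combination of single loops colored by $l$ that run along $K$, and then evaluating each such loop by Lemma~\ref{graphicalcoloredjones}. Concretely, decorate the pattern knot with the $n$-th Jones--Wenzl idempotent; by Lemma~\ref{graphicalcoloredjones} the quantity $\langle n\rangle J'_{M(K),n}(q)$ is then the evaluation of the $n$-colored Mazur pattern cabled along the framing-zero parallels $D_3(K)$. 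An $l$-colored loop around the $0$-framed $K$ evaluates to $\langle l\rangle J'_{K,l}(q)$, again by Lemma~\ref{graphicalcoloredjones}, which accounts for both the factor $\langle l\rangle$ and the factor $J'_{K,l}(q)$ appearing in $g(j,k,l;q)$; the remaining factors will be exactly the coefficient accumulated while reducing the tangle, and the global prefactor $\frac{1}{\langle n\rangle}$ converts $\langle n\rangle J'_{M(K),n}(q)$ back to $J'_{M(K),n}(q)$.

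The reduction itself proceeds in three stages. First, since $\wrap_V(k)=3$, the decorated pattern presents three parallel $n$-colored strands running along $K$ together with the Mazur clasp. I would fuse pairs of these strands using the fusion identity (\ref{pall}) with $s=t=n$: because the triple $(n,n,u)$ is admissible only for even $u$, the two applications of (\ref{pall}) introduce exactly the summation colors $u=2k$ and $u=2j$ and the coefficients $\langle 2k\rangle/\langle n,n,2k\rangle$ and $\langle 2j\rangle/\langle n,n,2j\rangle$. The ranges $0\le k\le n$ and $0\le j\le 2k$ are forced as the admissibility conditions on the resulting trivalent vertices; equivalently, they are the conditions for the tetrahedral coefficient $\left\langle\begin{smallmatrix} 2k & 2k & 2j \\ n & n & n\end{smallmatrix}\right\rangle$ to be nonzero.

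Second, the Mazur clasp and the framing data of the parallels $D_3(K)$ are resolved with the twist and curl identities (\ref{twist-tri}) and (\ref{curl}). These contribute the twist coefficients $\delta(2j;n,n)^2$ and $\delta(2k;n,n)^{-1}$ together with the global framing normalization $q^{-\frac34 n(n+2)}$ coming from the three $0$-framed parallels. Third, the remaining trivalent graph is collapsed using the tetrahedral identity (\ref{tetra-s}) and the triangle identity (\ref{tri-open}), packaged by Lemma~\ref{lem:collapse}: these produce the two tetrahedral coefficients $\left\langle\begin{smallmatrix} 2k & 2k & 2j \\ n & n & n\end{smallmatrix}\right\rangle$ and $\left\langle\begin{smallmatrix} 2k & 2k & 2j \\ n & n & l\end{smallmatrix}\right\rangle$, the normalizing theta-symbols $\langle 2k,2k,2j\rangle$ and $\langle 2k,n,l\rangle$, and finally a single $l$-colored loop along $K$. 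Summing over the admissible colors $l$ with $|2k-n|\le l\le 2k+n$ and $n+l$ even (the admissibility of the vertex with colors $2k,n,l$) assembles (\ref{doublesum}).

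The main obstacle will be the precise bookkeeping of the framing and twist contributions: one must fuse and orient the three parallel strands so that the two fusion colors $2k$ and $2j$ emerge with exactly the stated admissibility constraints, and track every application of (\ref{twist-tri}) and (\ref{curl}) so that the $\delta$-factors and the global power $q^{-\frac34 n(n+2)}$ combine exactly, rather than off by a framing unit or a sign. I expect the cleanest route is to first reduce the part of the tangle lying inside $V$---which is independent of $K$---completely to the coefficient of an $l$-colored core via (\ref{pall})--(\ref{tetra-s}) and Lemma~\ref{lem:collapse}, and only then insert the companion and apply Lemma~\ref{graphicalcoloredjones}.
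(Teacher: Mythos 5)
Your proposal is correct and follows essentially the same route as the paper's proof: both reduce the $n$-colored Mazur pattern by the Masbaum--Vogel calculus, using the fusion identity (\ref{pall}) to introduce the even colors $2k$ and $2j$, the twist identities to produce $\delta(2j;n,n)^2\delta(2k;n,n)^{-1}q^{-\frac{3}{4}n(n+2)}$, the tetrahedral collapses (\ref{tetra-s}), (\ref{tri-open}) and Lemma~\ref{lem:collapse}, a further fusion along the companion giving the sum over $l$ with coefficient $\langle l\rangle/\langle 2k,n,l\rangle$, and Lemma~\ref{graphicalcoloredjones} to evaluate the resulting $l$-colored copy of $K$ as $\langle l\rangle J'_{K,l}(q)$. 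The only difference is organizational and harmless: you propose fully reducing the tangle inside $V$ before inserting the companion, whereas the paper carries $K$ through each graphical equality.
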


\begin{proof}

Using the above formulas, we can compute

\begin{eqnarray*}
 &&\langle n \rangle J_{M(K), n}'(q)\\
 &=&  \sum_{k=0}^n \left(
\frac {\langle 2k \rangle}{\langle n,n,2k\rangle} q^{-\frac 3 4 n(n+2)}
\pcr{calculus1}{0.25} \right)\\
 && \\
 &&\\
 &&\\
 &&\\
&=&  \sum_{j,k=0}^n \left(
\frac {\langle  2k \rangle}{\langle n,n,2k\rangle}\frac {\langle 2j \rangle}{\langle n,n,2j\rangle} q^{-\frac 3 4 n(n+2)}
\pcr{calculus2}{0.25} \right) \\
 &&\\
 &=&  \sum_{j,k=0}^n \left(
\frac {\langle 2k \rangle}{\langle n,n,2k\rangle}\frac {\langle 2j \rangle}{\langle n,n,2j\rangle} q^{-\frac 3 4 n(n+2)}
\delta(2j;n,n)^{2}\delta(2k;n,n)^{-1}
\pcr{calculus3}{0.25} \right)\\
 &&\\
 &=&  \sum_{\substack{j,k=0 \\ j\le 2k}}^n \left(
\frac {\langle 2k\rangle}{\langle n,n,2k\rangle}\frac {\langle  2j \rangle}{\langle n,n,2j\rangle} q^{-\frac 3 4 n(n+2)}
\delta(2j;n,n)^{2}\delta(2k;n,n)^{-1} \right. \\
& & \left. \hskip 6.5truecm \times 
\frac {\left\langle 
       \begin{array}{ccc} 2j & n & n \\ n & 2k & 2k \end{array}\right\rangle}
      {\langle 2j,2k,2k\rangle}
      \pcr{calculus4}{0.25} \right)\\
 &&\\
  &=&  \sum_{\substack{j,k=0 \\ j\le 2k}}^n \left(
\frac {\langle  2k \rangle}{\langle n,n,2k\rangle}\frac {\langle  2j \rangle}{\langle n,n,2j\rangle} q^{-\frac 3 4 n(n+2)}
\delta(2j;n,n)^{2}\delta(2k;n,n)^{-1}
\frac {\left\langle 
       \begin{array}{ccc} 2k & 2k & 2j \\ n & n & n \end{array}\right\rangle}
      {\langle 2k,2k,2j\rangle} \right. \\
 & & \left. \hskip 5.5truecm \times 
\sum_{\substack{|2k-n|\le l \le 2k+n \\ n+l:even}} \frac {\langle l \rangle}{\langle 2k,n,l\rangle}
\pcr{calculus5}{0.25} \right)\\
 &&\\
   &=&  \sum_{\substack{j,k=0 \\ j\le 2k}}^n \left(
\frac {\langle  2k \rangle}{\langle n,n,2k\rangle}\frac {\langle  2j \rangle}{\langle n,n,2j\rangle} q^{-\frac 3 4 n(n+2)}
\delta(2j;n,n)^{2}\delta(2k;n,n)^{-1}
\frac {\left\langle 
       \begin{array}{ccc} 2k & 2k & 2j \\ n & n & n \end{array}\right\rangle}
      {\langle 2k,2k,2j\rangle}\right.\\
 & &  \left. \hskip 5truecm \times 
\sum_{\substack{|2k-n|\le l \le 2k+n \\ n+l:even}} \frac {\langle  l \rangle}{\langle 2k,n,l\rangle}
\frac {\left\langle 
       \begin{array}{ccc} l & n & 2k \\ 2j & 2k & n \end{array}\right\rangle}
      {\langle  l \rangle}
      \pcr{calculus6}{0.25}\right)\\
 &&\\
   &=&  \sum_{\substack{j,k=0 \\ j\le 2k}}^n \left(
\frac {\langle  2k \rangle}{\langle n,n,2k\rangle}\frac {\langle  2j \rangle}{\langle n,n,2j\rangle} q^{-\frac 3 4 n(n+2)}
\delta(2j;n,n)^{2}\delta(2k;n,n)^{-1}
\frac {\left\langle 
       \begin{array}{ccc} 2k & 2k & 2j \\ n & n & n \end{array}\right\rangle}
      {\langle 2k,2k,2j\rangle}\right.\\
 & & \left.\hskip 6truecm \times 
\sum_{\substack{|2k-n|\le l \le 2k+n \\ n+l:even}} \frac {\langle  l \rangle}{\langle 2k,n,l\rangle}
 \left\langle 
       \begin{array}{ccc} 2k & 2k & 2j \\ n & n & l \end{array}\right\rangle
  J_{K,l}'(q)\right)\\
   &=&  \sum_{\substack{j,k=0 \\ j\le 2k}}^n \left( \sum_{\substack{|2k-n|\le l \le 2k+n \\ n+l:even}}
\frac {\langle  2k \rangle}{\langle n,n,2k\rangle}\frac {\langle  2j \rangle}{\langle n,n,2j\rangle} 
\frac {\left\langle 
       \begin{array}{ccc} 2k & 2k & 2j \\ n & n & n \end{array}\right\rangle}
      {\langle 2k,2k,2j\rangle}
\frac{\delta(2j;n,n)^{2}}{\delta(2k;n,n)} q^{-\frac 3 4 n(n+2)}
      \right.\\
 & & \left.\hskip 8truecm \times 
 \frac {\langle  l \rangle}{\langle 2k,n,l\rangle}
 \left\langle 
       \begin{array}{ccc} 2k & 2k & 2j \\ n & n & l \end{array}\right\rangle
  J_{K,l}'(q)\right)\\
  &=& \sum_{\substack{j,k=0 \\ j\le 2k}}^n \left( \sum_{\substack{|2k-n| \le l \le 2k+n \\ n+l:even}}
g(j,k,l;q) \right)
\end{eqnarray*}
\end{proof}

\subsection{Preliminary computations of the maximum and minimum degree}
\label{preliminary_degree_computation}

For a polynomial  $f(q)\in \Q [q^{\pm \frac{1}{4}}]$, its maximum degree is denoted by $d_+[f(q)]$ and its minimum degree is denoted by $d_-[f(q)]$.
We extend the maximum and minimum degrees to a rational function $f(q) = \frac{f_1(q)}{f_2(q)}$ with $f_1(q), f_2(q)\in \Q [q^{\pm \frac{1}{4}}]$ and $f_2(q)\ne 0$, by setting $d_+[f(q)] =d_+[f_1(q)]-d_+[f_2(q)]$ and $d_-[f(q)] = =d_-[f_1(q)]-d_-[f_2(q)]$.

Propositions~\ref{maxdeg_Mazur_normalized} and \ref{mindeg_Mazur_normalized} 
give the maximum and minimum degree of 
$J'_{M(K), n}(q) \in \mathbb{Z}[q^{\pm 1}]$ under various hypotheses.   
For convenience we recall the maximum and minimum degrees of the functions which appear in the expression 
of $J'_{M(K), n}(q)$ given in Proposition~\ref{CJP_M(K)}.

By Definition \ref{def:<n>} we have: 

\begin{lemma}
\label{<n>}
$d_{\pm}[\langle n \rangle] = \pm\dfrac{1}{2}n$.
\end{lemma}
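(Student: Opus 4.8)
The plan is to read off the claim directly from Definition~\ref{def:<n>}, where $\langle n \rangle = (-1)^n [n+1]$ and $[n+1] = \frac{q^{(n+1)/2} - q^{-(n+1)/2}}{q^{1/2} - q^{-1/2}}$. Since the sign $(-1)^n$ is a unit in $\Q[q^{\pm\frac12}]$ and contributes nothing to the degree, it suffices to compute $d_{\pm}[[n+1]]$.

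First I would compute the degrees of the numerator and denominator of $[n+1]$ separately, using the extension of $d_{\pm}$ to rational functions recorded just before the lemma (namely $d_{\pm}[f_1/f_2] = d_{\pm}[f_1] - d_{\pm}[f_2]$). For the numerator $q^{(n+1)/2} - q^{-(n+1)/2}$, the maximum degree is $\frac{n+1}{2}$ and the minimum degree is $-\frac{n+1}{2}$; for the denominator $q^{1/2} - q^{-1/2}$, the maximum degree is $\frac12$ and the minimum degree is $-\frac12$. Subtracting gives $d_+[[n+1]] = \frac{n+1}{2} - \frac12 = \frac{n}{2}$ and $d_-[[n+1]] = -\frac{n+1}{2} - (-\frac12) = -\frac{n}{2}$, which is exactly the assertion $d_{\pm}[\langle n \rangle] = \pm\frac12 n$.

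The only point requiring a word of care is that the leading (respectively trailing) terms do not cancel when forming the quotient, so that the degree of the quotient really is the difference of degrees rather than merely bounded by it. This is immediate here because $[n+1]$ is the balanced quantum integer $q^{n/2} + q^{(n-2)/2} + \cdots + q^{-n/2}$, a genuine Laurent polynomial whose top term $q^{n/2}$ and bottom term $q^{-n/2}$ survive; equivalently, the unknot colored Jones value $J_{\bigcirc, n+1}(q) = [n+1]$ already appears in the Introduction with these extreme terms manifest. Thus there is no cancellation obstacle, and the computation is self-contained.

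I do not anticipate any genuine difficulty in this lemma; it is a purely computational observation used to bookkeep the degrees of the constituent functions in the expression for $J'_{M(K),n}(q)$ from Proposition~\ref{CJP_M(K)}. The mild ``obstacle,'' if any, is purely notational: one must keep straight that $\langle n \rangle$ carries the shift $[n+1]$ rather than $[n]$, so that the answer is $\pm\frac{n}{2}$ and not $\pm\frac{n+1}{2}$. Beyond that, the proof is a one-line application of the definition together with the additivity of degrees under products and quotients.
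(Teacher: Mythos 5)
Your computation is correct and matches the paper, which states this lemma without proof as an immediate consequence of Definition~(\ref{def:<n>}): since $\langle n \rangle = (-1)^n[n+1]$ and $[n+1] = q^{n/2} + q^{(n-2)/2} + \cdots + q^{-n/2}$, the degrees are $\pm\frac{n}{2}$ exactly as you argue. Your extra care about the quotient convention and non-cancellation is sound, though the paper treats the whole matter as definitional.
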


\begin{lemma}[\cite{GV}] 
\label{deglem1}
The maximum and minimum degrees of $\langle s,t,u \rangle$ are  
 given by 
\[d_{\pm}[\langle s,t,u \rangle]= \pm\frac {s+t+u} 4.\]
\end{lemma}

\begin{lemma}[\cite{GV}] 
\label{deglem2} 
The maximum and minimum degrees of 
$\dis{
\left\langle \begin{array}{ccc} A & B & E \\ D & C & F \end{array}\right\rangle
}$ are given by 
\begin{eqnarray*}
&& d_{\pm} \left[\left\langle \begin{array}{ccc} A & B & E \\ D & C & F \end{array}\right\rangle \right]\\
&&= \pm\frac 1 2[-\Sigma^2-\frac 1 2 (A^2+B^2+C^2+D^2+E^2+F^2-\Sigma )
+\frac 3 2 \sum_{i=1}^3 b_i(b_i-1) +\sum_{j=1}^4 a_j(a_j+1)\\
&& \quad -3M^2+M(1+2\Sigma)], 
\end{eqnarray*}
where $\Sigma, a_j, b_i$ are as in (\ref{tetsymb}) and  $M=\min b_i$. 
\end{lemma}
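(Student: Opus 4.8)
The plan is to compute the maximum degree $d_{+}$ directly and to obtain $d_{-}$ for free by symmetry. Each quantum integer $[t]=q^{(t-1)/2}+q^{(t-3)/2}+\cdots+q^{-(t-1)/2}$ is palindromic, so $[t](q^{-1})=[t](q)$; hence every factorial $[m]!$, and therefore the entire tetrahedral symbol \eqref{tetsymb}, is invariant under $q\mapsto q^{-1}$. For such a Laurent polynomial one has $d_{-}=-d_{+}$, which produces the $\pm$ in the statement, so it suffices to treat $d_{+}$.

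First I would record the degrees of the building blocks. From the expansion of $[t]$ we get $d_{\pm}[[t]]=\pm\frac{t-1}{2}$ with unit leading and trailing coefficient, and summing over $t$ gives $d_{\pm}[[m]!]=\pm\frac{m(m-1)}{4}$, again with unit leading coefficient. Applying the convention $d_{+}[f_1/f_2]=d_{+}[f_1]-d_{+}[f_2]$ to the prefactor $\prod_{i,j}[b_i-a_j]!\big/\big([A]!\cdots[F]!\big)$ yields the contribution $\frac14\big(\sum_{i,j}(b_i-a_j)(b_i-a_j-1)-\sum_{X}X(X-1)\big)$, where $X$ runs over $A,B,C,D,E,F$.

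Next I would analyze the sum over $s$. The generic summand $(-1)^s[s+1]!\big/\big(\prod_i[b_i-s]!\prod_j[s-a_j]!\big)$ has formal maximum degree
\[
D_{+}(s)=\frac{(s+1)s}{4}-\sum_{i=1}^{3}\frac{(b_i-s)(b_i-s-1)}{4}-\sum_{j=1}^{4}\frac{(s-a_j)(s-a_j-1)}{4},
\]
a quadratic in $s$. Using the identities $\sum_{i}b_i=\sum_{j}a_j=\Sigma$ (immediate from the definitions of $a_j,b_i$), its leading coefficient is $-\frac32$ and its vertex sits at $s^{\ast}=\frac{1+2\Sigma}{6}$. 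Since each $b_i\ge 0$ we have $\Sigma=\sum_i b_i\ge 3M$ with $M=\min b_i$, so $s^{\ast}\ge M+\frac16>M$. Thus $D_{+}$ is concave and strictly increasing on the whole summation range $\max_j a_j\le s\le M$, and it attains its maximum over the consecutive integers in this range uniquely at $s=M$.

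Finally I would rule out cancellation and assemble the answer. Because $s=M$ is the strict maximizer of $D_{+}$ and every factorial has unit leading coefficient, the $q\to\infty$ expansion of the tetrahedral sum has its top term supplied solely by the $s=M$ summand; that coefficient is $(-1)^{M}$ times the unit leading coefficient of the prefactor, hence nonzero. As the tetrahedral symbol is a genuine Laurent polynomial, its honest maximum degree therefore equals the prefactor degree plus $D_{+}(M)$ (the factor $[0]!=1$ for the minimizing index drops out). Substituting $s=M$ and consolidating the cross terms using $\sum_i b_i=\sum_j a_j=\Sigma$ yields the displayed closed form. The main obstacle is precisely this no-cancellation step together with pinning the maximizer at $s=M$: one must verify in full generality that $D_{+}$ is concave with vertex strictly beyond $M$, which is exactly where $b_i\ge 0$ and $\sum_i b_i=\sum_j a_j=\Sigma$ are indispensable, so that the formal per-summand degree is not an overestimate spoiled by cancellation. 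The remaining passage to the exact formula is routine but lengthy algebra.
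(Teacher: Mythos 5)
Your proposal is correct: I checked the algebra, and assembling the prefactor degree $\frac14\bigl(\sum_{i,j}(b_i-a_j)(b_i-a_j-1)-\sum_X X(X-1)\bigr)$ with $D_+(M)$ does reproduce the displayed formula, and the key structural points (concavity of $D_+$ with leading coefficient $-\frac32$, vertex at $\frac{2\Sigma+1}{6}>M$ since $\Sigma=\sum_i b_i\ge 3M$, hence a unique maximizer at $s=M$ and no top-degree cancellation, with $d_-=-d_+$ by palindromy of the quantum integers) are all sound. Note, however, that the paper itself contains no proof of this lemma -- it is quoted verbatim from the reference \cite{GV} -- so there is no in-paper argument to compare against; your reconstruction is essentially the same quadratic-optimization-over-the-summation-range argument used in that reference, so nothing genuinely different is being done, and the proof stands on its own.
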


Let us unravel the terms of $d_\pm[g(j,k,l;q)]$.
From Lemma \ref{deglem1}, 
we have that 
\begin{equation}
\label{pdeg3j}
 d_{\pm}\left[ \frac {\langle 2k \rangle}{\langle n,n,2k\rangle} \right]
 = \pm \left( -\frac n 2+\frac k 2 \right) 
 \qquad \mbox{and} \qquad
  d_{\pm}\left[ \frac {\langle 2j \rangle}{\langle n,n,2j\rangle} \right]
 = \pm \left(-\frac n 2+\frac j 2 \right).
\end{equation}
Equation (\ref{delta}), the definition of $\delta$, implies:
\begin{equation}
\label{pdegdelta}
d_{\pm}[\delta (2j; n,n)]
 =-\frac {j(j+1)} 2+\frac 1 4 n(n+2) 
\end{equation}
where $d_+ =d_-$ since $\delta(u;s,t)$ is a monomial.
Following Lemma \ref{deglem2}, we have
\begin{eqnarray}
\label{pdeg6ja}
 & & d_{\pm}\left[\left\langle \begin{array}{ccc} 2k & 2k & 2j \\ n & n & l \end{array}\right\rangle \right]\\
& & = \left\{\begin{array}{ll}
           \pm \frac 1 2 (j+k+\frac l 2+\frac n2) & (j\le \frac {n-l} 2+k), \\
           \pm \frac 1 2 (-j^2+2jk-k^2+2k-jl+kl+jn-kn-\frac {l^2} 4+ \frac n 2 l-\frac{n^2} 4+n) 
              & (j\ge \frac {n-l} 2+k),
            \end{array}
      \right. \nonumber
\end{eqnarray}
and in particular (when $l=n$), 
\begin{equation}
\label{pdeg6jb}
 d_{\pm}\left[ \left\langle \begin{array}{ccc} 2k & 2k & 2j \\ n & n & n \end{array}\right\rangle\right]
 = \left\{\begin{array}{ll}
           \pm \frac 1 2 (j+k+n) & (j\le k), \\
            \pm \frac 1 2 (-j^2+2jk-k^2+2k+n) 
              & (j\ge k).
            \end{array}
      \right. 
\end{equation}

From the equalities (\ref{pdeg3j}), (\ref{pdegdelta}), (\ref{pdeg6ja}),  
and (\ref{pdeg6jb}),  and the definition of $g(j,k,l;q)$ of Proposition~\ref{CJP_M(K)}, one then computes that 
\begin{lemma}
\label{lem:partialdegreecomputation}
\begin{multline*}
d_\pm[g(j,k,l;q)]  = \frac{1}{2} \left( -2j - 2j^2 + k + k^2 - 2n - n^2 \pm \left( -2k - \frac{5n}{2}+ \frac{l}{2}\right) \right) + d_\pm[J'_{K,l}(q)] \\
+  d_{\pm}\left[\left\langle \begin{array}{ccc} 2k & 2k & 2j \\ n & n & l \end{array}\right\rangle \right]
+ d_{\pm}\left[ \left\langle \begin{array}{ccc} 2k & 2k & 2j \\ n & n & n \end{array}\right\rangle\right]. \qed
\end{multline*} 
\end{lemma}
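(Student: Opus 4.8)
The plan is to treat $g(j,k,l;q)$ as the product of the eight elementary factors appearing in its definition in Proposition~\ref{CJP_M(K)}, and to exploit the fact that, under the extended degree conventions of this subsection, $d_\pm$ is additive over products and subtractive over quotients. Concretely, I would write
\[
d_\pm[g(j,k,l;q)] = d_\pm\!\left[\frac{\langle 2k\rangle}{\langle n,n,2k\rangle}\right] + d_\pm\!\left[\frac{\langle 2j\rangle}{\langle n,n,2j\rangle}\right] + \cdots + d_\pm[J'_{K,l}(q)],
\]
so that the entire computation reduces to substituting the degrees of the individual pieces and collecting terms.

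First I would record the degrees already available. Equation~(\ref{pdeg3j}) handles the two theta-quotients $\langle 2k\rangle/\langle n,n,2k\rangle$ and $\langle 2j\rangle/\langle n,n,2j\rangle$; Lemma~\ref{deglem1} gives $d_\pm[\langle 2k,2k,2j\rangle]=\pm(k+\tfrac{j}{2})$ and $d_\pm[\langle 2k,n,l\rangle]=\pm\tfrac{2k+n+l}{4}$, while Lemma~\ref{<n>} gives $d_\pm[\langle l\rangle]=\pm\tfrac{l}{2}$; Equation~(\ref{pdegdelta}) gives the (monomial, hence $d_+=d_-$) degree of each $\delta$, from which $d_\pm[\delta(2j;n,n)^2/\delta(2k;n,n)]$ follows; and $q^{-\frac34 n(n+2)}$ contributes its exponent directly. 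The two remaining factors, the tetrahedral symbols with bottom rows $(n,n,l)$ and $(n,n,n)$, I would deliberately leave unexpanded, carrying their degrees as symbols; this is precisely why they appear as separate summands in the statement, their values being given only piecewise by Equations~(\ref{pdeg6ja}) and (\ref{pdeg6jb}), so that expanding them here would force a premature case analysis.

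Then I would split the sum into the symmetric ($\pm$-independent) contributions and the $\pm$-coefficient contributions. Since every theta symbol and every $\delta$ has $d_+=\pm d_-$ of the stated forms, the only factors carrying a nonzero symmetric part are the $\delta$-quotient and the monomial $q^{-\frac34 n(n+2)}$; adding these should yield $\tfrac12(-2j-2j^2+k+k^2-2n-n^2)$. Summing the $\pm$-coefficients of the two theta-quotients, of $1/\langle 2k,2k,2j\rangle$, and of $\langle l\rangle/\langle 2k,n,l\rangle$ should yield $\pm\tfrac12(-2k-\tfrac{5n}{2}+\tfrac{l}{2})$; combined with the two retained tetrahedral terms and $d_\pm[J'_{K,l}(q)]$ this reproduces the claimed identity exactly.

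The computation is entirely mechanical, so there is no genuine conceptual obstacle; the only points needing care are bookkeeping ones. I would verify that additivity of $d_\pm$ across the product is legitimate, i.e. that no leading or trailing coefficients cancel, which holds because each factor is a nonzero Laurent polynomial (or a quotient handled by definition), so top- and bottom-degree terms simply multiply. I would also watch the sign of the $1/\langle 2k,2k,2j\rangle$ contribution, where the quotient convention flips the $\pm$, and the consolidation of the $n$-coefficients $-\tfrac12-\tfrac12-\tfrac14=-\tfrac54$ in the $\pm$-part, as these are the places where an arithmetic slip is most likely.
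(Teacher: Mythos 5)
Your proposal is correct and is essentially the paper's own argument: the paper proves this lemma simply by citing the degree formulas \((\ref{pdeg3j})\), \((\ref{pdegdelta})\), Lemmas~\ref{<n>} and \ref{deglem1}, and the definition of \(g(j,k,l;q)\), and asserting that ``one then computes'' the stated identity, which is precisely the additive bookkeeping (with the two tetrahedral symbols deliberately left symbolic) that you carry out. Your split into the symmetric part \(\tfrac12(-2j-2j^2+k+k^2-2n-n^2)\), coming only from the \(\delta\)-quotient and the monomial \(q^{-\frac34 n(n+2)}\), and the antisymmetric part \(\pm\bigl(-k-\tfrac{5n}{4}+\tfrac{l}{4}\bigr)\), coming from the theta-quotients and \(1/\langle 2k,2k,2j\rangle\), checks out exactly.
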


\section{The maximum degree of a Mazur double}

\subsection{Computations of the maximum degree}
\label{computation_degrees}

\begin{proposition}[\textbf{max-degree of $J'_{M(K), n}(q)$}]
\label{maxdeg_Mazur_normalized}
Let $K$ be a knot in $S^3$ and 
$N'_K$ the smallest nonnegative integer such that $d_+[J'_{K,n}(q)]$ is a quadratic quasi-polynomial 
$\delta'_K(n)=\alpha(n) n^2 +\beta(n) n+\gamma(n)$ for $n \ge 2N'_K$. 
We define $i \in \{0,1 \}$ by $i \equiv n \pmod 2$ and 
 put $\alpha_i:=\alpha (2N'_K+i)$, $\beta_i:=\beta (2N'_K+i)$, and $\gamma_i:=\gamma (2N'_K+i)$. 
We assume that the period of $\delta'_K(n)$ is less than or equal to $2$ and that 
$-2\alpha_i+\beta_i+\frac 1 2 \le 0$. 
Assume further that if $\alpha_i = 0$, then $\beta_i \ne -\frac 1 2$. 
Then, for suitably large $n$,  the maximum degree of the colored Jones polynomial 
of its Mazur double 
is given by 
\begin{equation}
  \delta'_{M(K)}(n)=
   \left\{ \begin{array}{ll} 
             9\alpha_i n^2 +3\beta_i n+\gamma_i & (\alpha_i> 0) \\
             \alpha_i n^2 +(\beta_i-1) n+\gamma_i  &  (\alpha_i \le 0).
       \end{array} \right. 
\end{equation}
\end{proposition}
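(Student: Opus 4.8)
The plan is to read off $d_+\!\left[J'_{M(K),n}(q)\right]$ from Proposition~\ref{CJP_M(K)} by bounding the maximum degree of the double sum by $\max_{j,k,l}d_+[g(j,k,l;q)]$, locating the maximizer, checking that its top-degree coefficient survives in the sum, and finally subtracting $d_+[\langle n\rangle]=\tfrac12 n$ (Lemma~\ref{<n>}) to account for the prefactor $1/\langle n\rangle$. First I would make $d_+[g(j,k,l;q)]$ fully explicit: substituting the degree formulas (\ref{pdeg3j})--(\ref{pdeg6jb}) and $d_+[J'_{K,l}(q)]=\alpha_i l^2+\beta_i l+\gamma_i$ into Lemma~\ref{lem:partialdegreecomputation} — legitimate because $n+l$ even forces $\alpha(l)=\alpha_i$, $\beta(l)=\beta_i$, $\gamma(l)=\gamma_i$, and $l\ge 2N'_K$ for large $n$ — turns $d_+[g]$ into a piecewise-quadratic function of $(j,k,l)$ on the polytope $\{0\le j\le\min(n,2k),\ 0\le k\le n,\ |2k-n|\le l\le 2k+n\}$, the pieces being cut out by the thresholds $j=k$ and $j=\tfrac{n-l}{2}+k$ coming from the two tetrahedral symbols.

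The maximization I would carry out in stages. A short check across the pieces shows $d_+[g]$ is maximized at $j=0$ (in the generic piece the positive linear contribution $+j$ of the two tetrahedra cancels the $-j$ from the other factors, leaving $-j^2\le 0$). With $j=0$ the objective becomes $\tfrac{k^2}{2}+\tfrac k2+\alpha_i l^2+(\beta_i+\tfrac12)l-\tfrac{n^2}{2}-\tfrac{3n}{2}+\gamma_i$, a sum of a $k$-part and an $l$-part coupled only through $|2k-n|\le l\le 2k+n$. The hypothesis $-2\alpha_i+\beta_i+\tfrac12\le0$ forces the vertex of the $l$-quadratic to $l^*\le0$, and the supplementary hypothesis ($\beta_i\ne-\tfrac12$ when $\alpha_i=0$) makes the $l$-slope strictly negative in the degenerate case $\alpha_i=0$; hence for $\alpha_i>0$ convexity sends the $l$-maximum to $l=2k+n$, both parts are then increasing in $k$, and the unique maximizer is $(j,k,l)=(0,n,3n)$ with value $9\alpha_i n^2+(3\beta_i+\tfrac12)n+\gamma_i$. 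Subtracting $\tfrac n2$ gives the first case.

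For $\alpha_i\le0$ the $l$-maximum instead sits at $l=|2k-n|$, and substituting this reduces the problem to maximizing a single-variable function $F(k)$ over $k\in[0,n]$; the expected answer is the right endpoint $k=n$ (so $l=n$), giving $\alpha_i n^2+(\beta_i-\tfrac12)n+\gamma_i$ and, after subtracting $\tfrac n2$, the stated $\alpha_i n^2+(\beta_i-1)n+\gamma_i$. I expect this last step to be the main obstacle. The decisive quantity is $F'(n)$, whose leading term is $(1+4\alpha_i)n$: for $\alpha_i\ge-\tfrac14$ it is nonnegative and $F$ is increasing up to $k=n$, pinning the maximum at the corner, whereas for $\alpha_i<-\tfrac14$ the maximizer migrates into the interior (near $k=\tfrac{2n}{3}$, $l=\tfrac n3$) and the leading coefficient jumps strictly above $\alpha_i$. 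Thus the argument genuinely rests on the bound $\alpha_i\ge-\tfrac14$ of Theorem~\ref{thm:SC_and_SSC_M}(2); the sign change of $1+4\alpha_i$ is exactly the source of the breakdown flagged in the Remark following that theorem and in Addendum~\ref{add:mazurSSCfailure}, and ruling out interior maxima of $F$ in the admissible range is the technical heart of the proof.

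Finally I would upgrade the inequality $d_+[\mathrm{sum}]\le\max_{j,k,l}d_+[g]$ to an equality by checking there is no cancellation at top degree. In each case the maximizer $(j,k,l)$ is unique, so the leading coefficient of the whole sum is just that of the single dominant $g(j,k,l;q)$; being a product of the nonzero leading coefficients of its constituent factors together with that of $J'_{K,l}(q)$, it is nonzero. This uniqueness makes non-cancellation automatic and completes the identification of $\delta'_{M(K)}(n)$.
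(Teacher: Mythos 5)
Your overall strategy is exactly the paper's: make $d_+[g(j,k,l;q)]$ explicit from Lemma~\ref{lem:partialdegreecomputation}, locate a unique maximizing triple over the summation domain, use uniqueness to rule out top-degree cancellation, and subtract $d_+[\langle n\rangle]=\tfrac n2$. Your handling of $j$ (the $+j$ from the two tetrahedral symbols cancels the $-j$, leaving $-j^2$) and your $\alpha_i>0$ case (unique maximizer $(0,n,3n)$, value $9\alpha_i n^2+(3\beta_i+\tfrac12)n+\gamma_i$) agree with the paper, which merely maximizes in the other order ($k$ first, then $l$). One repair you need throughout: the substitution $d_+[J'_{K,l}(q)]=\alpha_i l^2+\beta_i l+\gamma_i$ is \emph{not} legitimate on the whole polytope, only for $l\ge 2N'_K$; lattice points with bounded $l$ do occur (e.g.\ $(j,k,l)=(0,\tfrac n2,0)$ for even $n$, where $J'_{K,0}=1$ and the degree is $-\tfrac38 n^2-\tfrac54 n$) and must be dominated separately. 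This is harmless whenever the candidate maximum has leading coefficient exceeding $-\tfrac38$, which covers every case you actually treat, but it should be said.

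The substantive point is your $\alpha_i\le 0$ case, and here the situation is double-edged. As a proof of Proposition~\ref{maxdeg_Mazur_normalized} \emph{as stated} your argument has a gap: the proposition assumes only $-2\alpha_i+\beta_i+\tfrac12\le 0$ plus nondegeneracy, so $\alpha_i\le-\tfrac14$ is allowed, yet your maximization genuinely requires $\alpha_i>-\tfrac14$; you cannot import that bound from Theorem~\ref{thm:SC_and_SSC_M}(2), since the paper invokes this proposition (via Proposition~\ref{maxdeg_Mazur}) precisely in the regime $a_i<-\tfrac14$, e.g.\ in Theorem~\ref{thm:SC_and_SSC_M}(1) and Addendum~\ref{add:mazurSSCfailure}. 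Moreover your borderline claim is off: at $\alpha_i=-\tfrac14$ the hypothesis forces $\beta_i\le 2\alpha_i-\tfrac12=-1$, so $F'(n)=(1+4\alpha_i)n+2\beta_i+\tfrac32\le-\tfrac12<0$ and the corner $(0,n,n)$ already loses; your argument needs strict inequality. On the other hand, the obstruction you isolated is real and it exposes a flaw in the paper's own proof rather than in your approach. The paper asserts that for fixed $l$ the maximum over $k$ is at $k=n$, imposing only $\max\{0,\tfrac{l-n}2\}\le k\le n$; this silently discards the triangle-inequality constraint $2k-n\le l$ from the summation range, which binds exactly when $l<n$ (the point $(0,n,l)$ with $l<n$ is not in the domain at all). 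Keeping that constraint, as you do, the relevant boundary family is $k=\tfrac{n+l}2$ with $0\le l\le n$, whose degree is $\phi(l)=(\alpha_i+\tfrac18)l^2+(\tfrac n4+\beta_i+\tfrac34)l-\tfrac38n^2-\tfrac54n+\gamma_i$; for large $n$ one has $\max_{[0,n]}\phi=\phi(n)$ exactly when $\alpha_i>-\tfrac14$, while for $\alpha_i<-\tfrac14$ the maximum sits at an interior $l^*\approx\tfrac{n}{-8\alpha_i-1}$ and exceeds $\phi(n)=\alpha_i n^2+(\beta_i-\tfrac12)n+\gamma_i$ by a positive multiple of $n^2$. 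So in that regime $(0,n,n)$ is not even a maximizer, the uniqueness/no-cancellation argument collapses, and the displayed formula for $\delta'_{M(K)}$ is itself unproven (and, absent miraculous cancellation, false). The correct outcome of your analysis is therefore: prove and state the result for $\alpha_i>-\tfrac14$, where your argument (with the small-$l$ repair) is complete and the maximizer is unique, and flag explicitly that the case $\alpha_i\le-\tfrac14$ is established neither by your argument nor by the paper's.
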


\begin{proof}
From Proposition~\ref{CJP_M(K)} we have that 
\begin{equation*}
\langle n \rangle J'_{M(K), n}(q) =\sum_{\substack{j,k=0 \\ j\le 2k}}^n \left( \sum_{\substack{|2k-n| \le l \le 2k+n \\ n+l:even}}
g(j,k,l;q) \right).
\end{equation*}
Thus our goal now is to prove that, 
for each fixed integer $n$ above some bound of at least $2N'_K$,
in each case of $\alpha>0$ and $\alpha \le 0$ there exists a unique triple  $(j_0,k_0,l_0)$ such that 
\[
  \max_{\substack{0\le j,k \le n \\ |2k-n|\le l \le 2k+n \\ n+l:even}} d_+[g(j,k,l;q)]
    =d_+[g(j_0,k_0,l_0;q)].
\]
Then, due to the uniqueness, there can be no cancellation among the highest degree terms of the double sum Equation~(\ref{doublesum}) shown above so that $\delta'_{M(K)}(n)$ may be identified.

\medskip

From Lemma~\ref{lem:partialdegreecomputation}, 
we have that 
\begin{multline*}
d_+[g(j,k,l;q)]  = \frac{1}{2}\left(-2j-2j^2-k + k^2 + \frac{l}{2} - \frac{9n}{2} - n^2\right) + d_+[J'_{K,l}(q)] \\
+  \left\{\begin{array}{ll}
            \frac 1 2 (j+k+\frac l 2+\frac n2) & (j\le \frac {n-l} 2+k), \\
            \frac 1 2 (-j^2+2jk-k^2+2k-jl+kl+jn-kn-\frac {l^2} 4+ \frac n 2 l-\frac{n^2} 4+n) 
              & (j\ge \frac {n-l} 2+k),
            \end{array}
      \right. \\
+ \left\{\begin{array}{ll}
            \frac 1 2 (j+k+n) & (j\le k), \\
            \frac 1 2 (-j^2+2jk-k^2+2k+n) 
              & (j\ge k).
            \end{array}
      \right. 
\end{multline*}

To handle these last two summands, we will accordingly partition the $(j,k,l)$ domain and consider the following four cases: 

\medskip

\noindent
\textbf{Case (1).}\quad 
$j\le k$ and  $j\le \frac {n-l} 2+k$, 

\medskip

\noindent
\textbf{Case (2).}\quad  
$j\le k$ and $j\ge \frac {n-l} 2+k$, 

\medskip

\noindent
\textbf{Case (3).}\quad 
$j\ge k$ and  $j\le \frac {n-l} 2+k$, 

\medskip

\noindent
\textbf{Case (4).}\quad  
$j\ge k$ and $j\ge \frac {n-l} 2+k$.

\medskip
\noindent
We will see that the overall maximum of $d_+[g(j,k,l;q)]$ occurs in the domain of Case (1) and occurs uniquely.
The maxima on the domains of Cases (2) and (3) occur at their boundaries with the domain of Case (1), and the maxima on the domain of Case (4) occurs at its boundary with the domains of Cases (2) and (3).

\bigskip

\noindent 
\textbf{Case (1).} \quad  
$j\le k$ and  $j\le \frac {n-l} 2+k$.

From the equalities (\ref{pdeg3j}), (\ref{pdegdelta}), (\ref{pdeg6ja}),  
and (\ref{pdeg6jb}), 
one can  see that 
\[
d_+[g(j,k,l;q)]=\frac 1 2(-2j^2+k+k^2+l-n^2-3n)+d_+[J_{K,l}'(q)]. 
\]
This is maximized at $j=0$ for fixed $k$ and $l$ ($n$ is already fixed). 
Then we have 
\[
  \max_{0\le j \le \min \{k,\frac {n-l} 2+k \}}d_+[g(j,k,l;q)] 
    =d_+[g(0,k,l;q)] =\frac 1 2(k+k^2+l-n^2-3n)+d_+[J_{K,l}'(q)]. 
    \]
Since $k\le n$, 
this is maximized at $k = n$ for a fixed $l$. 
Then, as $2k-n \le l \le 2k+n$, 
putting $k = n$ gives $n \le l \le 3n$.  Hence $l \geq n \geq 2N'_K$ so that $d_+[J'_{K,l}(q)] = \delta'_K(l)$.  Thus
\[
  \max_{\max \{0,\frac {l-n} 2 \}\le k \le n}d_+[g(0,k,l;q)]
        =d_+[g(0,n,l;q)] =\frac 1 2(l-2n)+\delta_K'(l), 
\]
where the condition that both $0 \le k$ and $0 \le \frac{n-l}{2} + k$ is equivalent to $\mathrm{max}\{ 0, \frac{l -n}{2} \} \le k$.

Since $\delta_K'(l) = \alpha_i l^2 + \beta_i l + \gamma_i$, 
\[
d_+[g(0,n,l;q)] =
\frac 1 2(l-2n)+\delta_K'(l)
                            =\alpha_i l^2+ (\beta_i+\frac 1 2) l +\gamma_i-n.
\]
For a nonzero $\alpha_i$, this may be rewritten as
\[  
d_+[g(0,n,l;q)] =\alpha_i \left(l+\frac {2\beta_i +1}{4\alpha_i} \right)^2
                  - \frac {(2\beta_i +1)^2}{16\alpha_i}+\gamma_i-n. 
\]

If $\alpha_i >0$, since $-\frac {2\beta_i +1}{4\alpha_i} < n $ for a suitably large $n$, 
 then this is maximized uniquely at $l=3n$, and we have that 
\[
  \max_{n\le l\le 3n} d_+[g(0,n,l;q)]=d_+[g(0,n,3n;q)]=9\alpha_i n^2+(3\beta_i+\frac 1 2)n+\gamma_i. 
\]

\medskip

Now assume $\alpha_i \le 0$. 
Then from the assumption of Proposition~\ref{maxdeg_Mazur_normalized} that 
$-2\alpha_i+\beta_i+\frac 1 2 \le 0$, 
we have $\beta_i +\frac 1 2 \le 0$. 

\medskip

If $\alpha_i=0$, from the assumption that $\beta_i\ne -\frac 1 2$, 
then $\beta_i +\frac 1 2 < 0$. 
So, we get 
\[
  \max_{n\le l\le 3n} d_+[g(0,n,l;q)]=d_+[g(0,n,n;q)]=(\beta_i-\frac 1 2)n+\gamma_i. 
\]
If $\alpha_i<0$, since $\beta_i +\frac 1 2 \le 0$, then $-\frac {2\beta_i +1}{4\alpha_i} \le 0 $. 
So, we get 
\[
  \max_{n\le l\le 3n} d_+[g(0,n,l;q)]=d_+[g(0,n,n;q)]=\alpha_i n^2+(\beta_i-\frac 1 2)n+\gamma_i. 
\]

\smallskip 

\noindent 
\textbf{Case (2).}\quad 
$j\le k$ and  $j\ge \frac {n-l} 2+k$.

 We note that $n-l \le 0$ in this case, so $l \ge n \geq 2N'_K$.  (Also $2k+n \geq l \ge n$.)
 From the equalities (\ref{pdeg3j}), (\ref{pdegdelta}), (\ref{pdeg6ja}),  and (\ref{pdeg6jb}), 
one can  see that 

\begin{align*}
d_+[g(j,k,l;q)]
&=\frac 1 2\left(-j -3j^2 + 2k + 2jk - jl + kl + jn - kn - \frac {l^2}{4} + \frac {l(n+1)}{2}
      - \frac 5 4n^2 - \frac 5 2n\right)+\delta_K'(l)\\
   &=-\frac 3 2 \left(j-\frac 1 6(-1+2k-l+n)\right)^2+g_1(k,l,n),
\end{align*} 
where $g_1(k,l,n)$ is a function of $k$, $l$, and $n$.

Since
\[ j \ge \frac {n-l} 2+k \ge \frac 1 2 (2k-l+n) \ge \frac 1 6(2k-l+n) > \frac 16(-1+2k-l+n)\]
 we have that,  with these constraints,  $d_+[g(j,k,l;q)]$ is maximized at $j=\frac {n-l} 2+k \geq 0$. 
Therefor, this case is contained in the case (1).

\smallskip 

\noindent 
\textbf{Case (3).}\quad 
$j\ge k$ and  $j\le \frac {n-l} 2+k$.

 We note that $n-l \ge 0$ in this case. 
 From the equalities (\ref{pdeg3j}), (\ref{pdegdelta}), (\ref{pdeg6ja}),  and (\ref{pdeg6jb}), 
one can  see that 
\begin{eqnarray*}
d_+[g(j,k,l;q)]&=&\frac 1 2(-j-3j^2+2k+2jk+l-n^2-3n)+d_+[J_{K,l}'(q)],\\
      &=&-\frac 3 2(j-\frac 1 6(-1+2k))^2+g_2(k,l,n),
\end{eqnarray*} 
where $g_2(k,l,n)$ is a function of $k$, $l$, and $n$. 
Since $\frac  1 6(-1+2k) < k$ and $j\ge k$, 
 with these constraints, $d_+[g(j,k,l;q)]$ is maximized at $j=k$.
Therefore this case is contained in the case (1).

\smallskip 

\noindent 
\noindent 
\textbf{Case (4).}\quad 
$j\ge k$ and  $j\ge \frac {n-l} 2+k$.

 From the equalities (\ref{pdeg3j}), (\ref{pdegdelta}), (\ref{pdeg6ja}),  and (\ref{pdeg6jb}), 
one can  see that 
\begin{eqnarray*}
 & & d_+[g(j,k,l;q)]\\
&=&\frac 1 2(-2j-4j^2+3k+4jk-k^2-jl+kl+jn-kn-\frac {l^2} 4+\frac {l(n+1)} 2
      -\frac 5 4n^2-\frac 5 2n)+d_+[J_{K,l}'(q)],\\
      &=&-2(j-\frac 1 8(2+4k-l+n) )^2+g_3(k,l,n),
\end{eqnarray*} 
where $g_3(k,l,n)$ is a function of $k$, $l$, and $n$. 

When $k=0$, the condition $j\le 2k$ implies that $j=0$.  Then since $j=k$,  
this is contained in the case (2). 

So we assume that $k\ge 1$. 

\medskip

\noindent
(i) If $n\ge l$, then $j\ge \frac {n-l} 2+k\ge k$. Moreover, we have that
\[
(\frac {n-l} 2+k)-\frac 1 8(2+4k-l+n)=-\frac 1 4+\frac k 2+\frac 3 8(n-l)>0,
\]
since $k\ge 1$ and $n\ge l$. 
Thus, with these constraints, $d_+[g(j,k,l;q)]$ is maximized at $j= \frac {n-l} 2+k$.  Therefore this case is contained in 
the case (3). 

\medskip

\noindent
(ii) If $n\le l$, then $j\ge k \ge  \frac {n-l} 2+k$. Moreover, we have that
\[
k-\frac 1 8(2+4k-l+n)=-\frac 1 4+\frac k 2+\frac 1 8(l-n)>0,
\]
since $k\ge 1$ and $n\le l$. 
Thus, with these constraints, $d_+[g(j,k,l;q)]$ is maximized at $j= k$.  Therefore this case is contained in 
the case (2). 

\bigskip

Finally, considering the contribution to the maximum degree of $\frac 1 {\langle n \rangle}$, the maximization of $d_+[(j,k,l;q)]$ determined in Case (1) above gives the following.
\[ 
\delta'_{M(K)}(n)=\begin{cases} 
d_+[g(0,n,3n;q)]-\frac 1 2 n=9\alpha_i+3\beta_i n+\gamma_i & \mbox{ if } \alpha_i >0,\\
d_+[g(0,n,n;q)]-\frac 1 2 n=\alpha_i+(\beta_i-1) n+\gamma_i & \mbox{ if } \alpha_i \leq 0.
\end{cases}
\]
\end{proof}

\begin{proposition}[\textbf{max-degree of $J_{M(K), n}(q)$}]
\label{maxdeg_Mazur} 
Let $K$ be a knot in $S^3$, and let 
$N_K$ be the smallest nonnegative integer such that 
$d_+[J_{K,n}(q)]$ is a quadratic quasi-polynomial $\delta_K(n)=a(n) n^2 +b(n) n +c(n)$ 
for $n \ge 2N_K+1$. 
We assume that the period of $\delta_K(n)$ is at most $2$.
Then define $i \in \{0,1 \}$ by $i \equiv n \pmod 2$ and put $a_i:=a (2N_K + i)$, $b_i:=b (2N_K + i)$, and $c_i:=c (2N_K + i)$. 
Assume that $b_i \le 0$, and if $a_i = 0$, 
then $b_i \ne 0$. 
Then, for suitably large $n$, the maximum degree of the colored Jones polynomial 
of its Mazur  double 
is given by 
\begin{equation}
  \delta_{M(K)}(n)=
   \left\{ \begin{array}{ll} 
             9a_i n^2 +(- 12a_i+3b_i -1) n+4a_i-2b_i+c_i+1 & (a_i> 0), \\
             a_i n^2+(b_i-1) n+c_i+1  &  (a_i \le 0).
       \end{array} \right. 
\end{equation}
\end{proposition}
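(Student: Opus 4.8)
The plan is to derive this statement about the unnormalized polynomial $J_{M(K),n}(q)$ directly from its normalized counterpart, Proposition~\ref{maxdeg_Mazur_normalized}, via the normalization transition recorded in Subsection~\ref{transition}. The key relation is $\delta_{M(K)}(n) = \delta'_{M(K)}(n-1) + \frac{1}{2}n - \frac{1}{2}$, which reduces the computation to evaluating the piecewise-quadratic formula for $\delta'_{M(K)}$ at $n-1$ and adding the monomial correction $\frac{1}{2}n - \frac{1}{2}$. Since $d_+[J'_{K,n}(q)] = d_+[J_{K,n+1}(q)] - \frac{n}{2}$, the stabilization thresholds agree ($N'_K = N_K$), so the ``for suitably large $n$'' regime is inherited, and if $n$ is large then so is $n-1$.

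First I would check that the hypotheses of Proposition~\ref{maxdeg_Mazur_normalized} are exactly the translates of the hypotheses here under the coefficient dictionary of Subsection~\ref{transition}. From $\alpha_0 = a_1$ and $\beta_0 = 2a_1 + b_1 - \frac{1}{2}$ (and their parity-swapped analogues), one gets $-2\alpha + \beta + \frac{1}{2} = b$ at each parity, so the assumption $-2\alpha_i + \beta_i + \frac{1}{2} \le 0$ becomes precisely $b \le 0$; likewise $\alpha = 0$ together with $\beta = -\frac{1}{2}$ forces $a = 0$ and $b = 0$, so the condition ``$\alpha_i = 0 \Rightarrow \beta_i \ne -\frac{1}{2}$'' becomes ``$a = 0 \Rightarrow b \ne 0$.'' Since we assume $b_i \le 0$ and ($a_i = 0 \Rightarrow b_i \ne 0$) for both parities, the hypotheses of Proposition~\ref{maxdeg_Mazur_normalized} hold, and we may invoke it.

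It then remains to substitute. Writing $i \equiv n$ and $i' \equiv n-1 \pmod 2$, the shift $n \mapsto n-1$ flips the parity class, while the dictionary ($\alpha_0 = a_1$, $\alpha_1 = a_0$) also flips it, so the two flips cancel and $\alpha_{i'} = a_i$; in particular the case division $\alpha_{i'} > 0$ versus $\alpha_{i'} \le 0$ agrees with $a_i > 0$ versus $a_i \le 0$. In the case $a_i > 0$ I would expand $9\alpha_{i'}(n-1)^2 + 3\beta_{i'}(n-1) + \gamma_{i'}$, add $\frac{1}{2}n - \frac{1}{2}$, and replace $\alpha_{i'}, \beta_{i'}, \gamma_{i'}$ by $a_i$, $2a_i + b_i - \frac{1}{2}$, $a_i + b_i + c_i$; collecting terms gives $9a_i n^2 + (-12a_i + 3b_i - 1)n + (4a_i - 2b_i + c_i + 1)$. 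In the case $a_i \le 0$ the same procedure applied to $\alpha_{i'}(n-1)^2 + (\beta_{i'}-1)(n-1) + \gamma_{i'}$ yields $a_i n^2 + (b_i - 1)n + (c_i + 1)$, matching the claimed $\delta_{M(K)}(n)$.

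There is no deep obstacle here, as the statement is essentially a change of normalization layered on Proposition~\ref{maxdeg_Mazur_normalized}; the step demanding the most care is the index/parity bookkeeping of the preceding paragraph, namely confirming that the normalized data appearing in $\delta'_{M(K)}(n-1)$ is indexed by the parity of $n-1$ yet, after the dictionary substitution, is governed by the coefficient $a_i$ of $K$ at the parity of $n$. Everything else is routine polynomial algebra requiring no further geometric or quantum-topological input.
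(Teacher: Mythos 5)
Your proposal is correct and takes essentially the same approach as the paper: the paper's proof of Proposition~\ref{maxdeg_Mazur} consists precisely of invoking Proposition~\ref{maxdeg_Mazur_normalized} and applying the transition formula $\delta_{M(K)}(n)=\delta'_{M(K)}(n-1)+\frac{1}{2}n-\frac{1}{2}$ from Subsection~\ref{transition}. Your verification of the hypothesis dictionary ($-2\alpha_i+\beta_i+\frac{1}{2}=b$, etc.) and the cancelling parity flips simply makes explicit the bookkeeping that the paper leaves to the reader.
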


\begin{proof}
This 
is obtained from Proposition~\ref{maxdeg_Mazur_normalized} by using the formula \[
\delta_{M(K)}(n)={\delta'}_{M(K)}(n-1)+\frac{1}{2} n-\frac{1}{2}. 
\]
given in Subsection~\ref{transition}.
\end{proof}

\bigskip

\subsection{Mazur doubles and the Sign Condition}

Let us recall the Sign Condition introduced in \cite{BMT_tgW}, 
which will be used to prove Corollary~\ref{family_SCC}.   

\begin{definition}[\textbf{Sign Condition}]
\label{sign}
Let $\varepsilon_n(K)$  be the sign of the coefficient of the term of the maximum degree  of $J_{K, n}(q)$. 
A knot $K$ satisfies the \textit{Sign Condition} if 
$\varepsilon_m(K) = \varepsilon_n(K)$ for $m \equiv n\ \mathrm{mod}\ 2$. 
\end{definition}

Torus knots and $B$--adequate knots are known to satisfy the Sign Condition \cite{BMT_tgW}. 
Furthermore, this condition is preserved under taking connected sum, cables, untwisting $\omega$-generalized Whitehead doubles; see \cite[Proposition~6.6]{BMT_tgW}.  

We close this section by observing that the Sign Condition is preserved under taking Mazur doubles as well. 

\begin{proposition}
\label{Sign Condition}
If $K$ satisfies the Sign Condition, 
then $M(K)$ also satisfies the Sign Condition.
\end{proposition}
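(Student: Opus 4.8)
The plan is to track the sign of the leading coefficient through the \emph{single} surviving summand of the double sum of Proposition~\ref{CJP_M(K)}, reusing the unique-maximum structure already established in the proof of Proposition~\ref{maxdeg_Mazur_normalized}.

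First I would pass to the normalized polynomial $J'$. Writing $\varepsilon'_n(K)$ for the sign of the leading coefficient of $J'_{K,n}(q)$ and taking leading coefficients in the identity $\langle n\rangle J'_{K,n}(q)=(-1)^n J_{K,n+1}(q)$ of Equation~(\ref{KT-G-relation.formula}), the fact that $\mathrm{sgn}(\mathrm{lead}\,\langle n\rangle)=(-1)^n$ (Lemma~\ref{<n>}, since $[n+1]$ has positive leading coefficient) yields $\varepsilon'_n(K)=\varepsilon_{n+1}(K)$. As the Sign Condition only compares indices of equal parity, shifting the index by one preserves it, so $K$ (resp. $M(K)$) satisfies the Sign Condition if and only if $\varepsilon'_m=\varepsilon'_n$ whenever $m\equiv n\pmod 2$. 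Thus it suffices to control $\varepsilon'_n(M(K))$. I would then invoke the proof of Proposition~\ref{maxdeg_Mazur_normalized}: for suitably large $n$ the maximum-degree term of $\langle n\rangle J'_{M(K),n}(q)=\sum g(j,k,l;q)$ is attained at the \emph{unique} triple $(j_0,k_0,l_0)=(0,n,3n)$ when $\alpha_i>0$, and $(0,n,n)$ when $\alpha_i\le 0$. Uniqueness forbids cancellation among the highest-degree terms, so the leading coefficient of $\langle n\rangle J'_{M(K),n}(q)$ equals that of $g(j_0,k_0,l_0;q)$, whence
\[
(-1)^n\,\varepsilon'_n(M(K))=\mathrm{sgn}\bigl(\mathrm{lead}\,g(j_0,k_0,l_0;q)\bigr).
\]

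Next I would compute the right-hand sign factor by factor. From the definitions one reads off $\mathrm{sgn}(\mathrm{lead}\,\langle s\rangle)=(-1)^s$, $\mathrm{sgn}(\mathrm{lead}\,\langle s,t,u\rangle)=(-1)^{(s+t+u)/2}$, and $\mathrm{sgn}(\mathrm{lead}\,\delta(u;s,t))=(-1)^{(s+t+u)/2}$, while the monomial $q^{-\frac34 n(n+2)}$ and all quantum factorials $[m]!$ contribute $+1$. The only delicate factors are the two tetrahedral symbols, whose defining sum in Equation~(\ref{tetsymb}) is alternating and could a priori cancel. The key simplification is that $j_0=0$ forces the entry $E=2j_0=0$; checking the bounds $\max_j a_j\le s\le\min_i b_i$ then shows the summation range collapses to the single value $s_0=\tfrac{2k_0+n+l_0}{2}$, so
\[
\mathrm{sgn}\left(\mathrm{lead}\left\langle \begin{array}{ccc} 2k_0 & 2k_0 & 0 \\ n & n & l_0 \end{array}\right\rangle\right)=(-1)^{s_0},
\]
with all remaining factorials positive. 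Substituting $(j_0,k_0,l_0)=(0,n,3n)$ or $(0,n,n)$, the universal factors combine to a power of $-1$ equal to $+1$ in both cases, and the only $K$-dependent factor $J'_{K,l_0}(q)$ contributes $\varepsilon'_{l_0}(K)$; hence $\mathrm{sgn}(\mathrm{lead}\,g(j_0,k_0,l_0;q))=\varepsilon'_{l_0}(K)$, giving $\varepsilon'_n(M(K))=(-1)^n\varepsilon'_{l_0}(K)$. Since $l_0\in\{n,3n\}$ always satisfies $l_0\equiv n\pmod 2$, the Sign Condition for $K$ gives $\varepsilon'_{l_0}(K)=\varepsilon'_n(K)$, so $\varepsilon'_n(M(K))=(-1)^n\varepsilon'_n(K)$ depends only on the parity of $n$; translating back through $\varepsilon'_m=\varepsilon_{m+1}$ yields the Sign Condition for $M(K)$.

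The hard part will be the tetrahedral-symbol sign: the alternating sum defining it makes the sign of its leading term opaque in general, and the whole argument rests on the observation that the maximizing triple has $j_0=0$, degenerating each tetrahedral symbol into a one-term sum with transparent sign. A secondary point to flag is that the unique-maximum input is only available under the hypotheses of Proposition~\ref{maxdeg_Mazur_normalized} (equivalently Proposition~\ref{maxdeg_Mazur}); these hold for the $B$--adequate knots and torus knots, and for the iterated cables, doublings, and connected sums thereof, that are the subject of Corollary~\ref{family_SCC}, so the proposition applies in the setting in which it is used.
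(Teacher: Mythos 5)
Your proposal is correct and follows essentially the same route as the paper: both reduce to the Normalized Sign Condition via $\langle n\rangle J'_{K,n}(q)=(-1)^nJ_{K,n+1}(q)$, both invoke the unique maximizing triple $(0,n,3n)$ (for $\alpha_i>0$) or $(0,n,n)$ (for $\alpha_i\le 0$) from the proof of Proposition~\ref{maxdeg_Mazur_normalized} to rule out cancellation, and both conclude $\varepsilon'_n(M(K))=(-1)^n\varepsilon'_{l_0}(K)$ with $l_0\equiv n\pmod 2$. Your factor-by-factor sign bookkeeping, with the collapse of the tetrahedral symbols to a one-term sum when $2j_0=0$, is exactly the content of the paper's explicit evaluations such as $\left\langle \begin{array}{ccc} 2n & 2n & 0 \\ n & n & 3n \end{array}\right\rangle=(-1)^{3n}[3n+1]$, and your closing caveat about the hypotheses of Proposition~\ref{maxdeg_Mazur_normalized} is a fair reading of an assumption the paper leaves implicit.
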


To prove this we show that the ``Normalized'' Sign Condition, analogous to the Sign Condition, 
is preserved under taking Mazur doubles.

\begin{definition}[\textbf{Normalized Sign Condition}]
\label{sign'}
Let $\varepsilon'_n(K)$  be the sign of the coefficient of the term of the maximum degree  of $J'_{K, n}(q)$. 
A knot $K$ satisfies the \textit{Normalized Sign Condition} if 
$\varepsilon'_m(K) = \varepsilon'_n(K)$ for $m \equiv n\ \mathrm{mod}\ 2$. 
\end{definition}

Since 
\[J'_{K, n}(q):=\frac{J_{K, n+1}(q)}{J_{\bigcirc, n+1}(q)}\] 
Proposition~\ref{Normalized Sign Condition} below immediately implies the desired result. 

\begin{proposition}
\label{Normalized Sign Condition}
If $K$ satisfies the Normalized Sign Condition, 
then $M(K)$ also satisfies the Normalized Sign Condition.
\end{proposition}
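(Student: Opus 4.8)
The plan is to track the sign of the leading coefficient of $J'_{M(K), n}(q)$ through the double-sum formula of Proposition~\ref{CJP_M(K)}. First I would invoke the analysis carried out in the proof of Proposition~\ref{maxdeg_Mazur_normalized}: under its hypotheses, for each suitably large $n$ the maximum degree of $\langle n \rangle J'_{M(K), n}(q)$ is attained by a single summand $g(j_0,k_0,l_0;q)$, with $(j_0,k_0,l_0) = (0,n,3n)$ when $\alpha_i > 0$ and $(j_0,k_0,l_0) = (0,n,n)$ when $\alpha_i \le 0$. Since this maximizing triple is \emph{unique}, no cancellation can occur among the top-degree terms of Equation~(\ref{doublesum}), so $\varepsilon'_n(M(K))$ equals the sign of the leading coefficient of $\frac{1}{\langle n \rangle}\,g(j_0,k_0,l_0;q)$.

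Next I would compute that sign as a product over the factors of $g$. The leading coefficient of a product or quotient of nonzero Laurent polynomials is the product or quotient of the leading coefficients, so it suffices to record the sign contributed by each factor. From the definitions (\ref{def:<n>}) and (\ref{delta}), the sign of $\langle s \rangle$ is $(-1)^s$, the sign of $\langle s,t,u \rangle$ is $(-1)^{(s+t+u)/2}$ (every factorial $[m]!$ having positive leading coefficient), and $\delta(u;s,t)$ is a monomial of sign $(-1)^{(s+t+u)/2}$. The only $K$-dependent factor is $J'_{K,l_0}(q)$, whose leading-coefficient sign is $\varepsilon'_{l_0}(K)$ by definition. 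Because $l_0 \in \{n, 3n\}$ satisfies $l_0 \equiv n \pmod 2$, the Normalized Sign Condition for $K$ gives $\varepsilon'_{l_0}(K) = \varepsilon'_n(K)$, which will be the mechanism that transports the condition from $K$ to $M(K)$.

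The one genuinely delicate step, and the main obstacle, is the sign of the leading coefficient of the two tetrahedral symbols in $g$, namely $\left\langle \begin{array}{ccc} 2k_0 & 2k_0 & 2j_0 \\ n & n & n \end{array}\right\rangle$ and $\left\langle \begin{array}{ccc} 2k_0 & 2k_0 & 2j_0 \\ n & n & l_0 \end{array}\right\rangle$. In general the alternating sum in (\ref{tetsymb}) could produce an ambiguous sign, but at the relevant arguments this difficulty evaporates: substituting $(j_0,k_0) = (0,n)$ and $l_0 \in \{n,3n\}$ into the quantities $a_j, b_i$ of (\ref{tetsymb}), one finds $\max_j a_j = \min_i b_i$, so the summation index $s$ is forced to a single value and each symbol reduces to one explicit term of sign $(-1)^{\max_j a_j}$. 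This exponent is linear in $n$, hence its sign is $2$-periodic. Carrying out the bookkeeping over all factors in each of the two cases, I expect the universal factors together with the prefactor $1/\langle n \rangle$ to combine to the single uniform relation
\[ \varepsilon'_n(M(K)) = (-1)^n\, \varepsilon'_n(K). \]

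Finally, since both $(-1)^n$ and $\varepsilon'_n(K)$ depend only on the parity of $n$, so does $\varepsilon'_n(M(K))$; that is, $\varepsilon'_m(M(K)) = \varepsilon'_n(M(K))$ whenever $m \equiv n \pmod 2$, which is exactly the Normalized Sign Condition for $M(K)$. The entire argument hinges on the uniqueness of the degree-maximizing triple (so that the leading coefficient is a single unambiguous product) and on the collapse of the tetrahedral sums to single terms; beyond verifying those two points and the sign bookkeeping, I expect no further subtleties.
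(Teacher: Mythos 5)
Your proposal is correct and follows essentially the same route as the paper: reduce to the unique degree-maximizing summand $g(0,n,3n;q)$ (for $\alpha_i>0$) or $g(0,n,n;q)$ (for $\alpha_i\le 0$) supplied by the proof of Proposition~\ref{maxdeg_Mazur_normalized}, evaluate the signs of the factors at those arguments, and transport the sign of $J'_{K,l_0}(q)$ via $l_0\equiv n \pmod 2$. Your observation that the tetrahedral sums collapse because $\max_j a_j=\min_i b_i$ at these arguments is exactly why the paper can evaluate them as $[2n+1]$ and $(-1)^{3n}[3n+1]$, and the resulting relation $\varepsilon'_n(M(K))=(-1)^n\varepsilon'_n(K)$ matches the paper's conclusion in both cases.
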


\begin{proof}
Recall that 

\begin{equation*}
\langle n\rangle J'_{M(K), n}(q) =\sum_{\substack{j,k=0 \\ j\le 2k}}^n \sum_{\substack{|2k-n| \le l \le 2k+n \\ n+l:even}}
g(j,k,l;q), 
\end{equation*}
where
\begin{eqnarray*}
g(j,k,l;q)&:=&\left(
\frac {\langle  2k \rangle }{\langle n,n,2k\rangle } \frac {\langle  2j \rangle }{\langle n,n,2j\rangle } 
\frac { \left\langle 
\begin{array}{ccc} 2k & 2k & 2j \\ n & n & n \end{array}\right\rangle}
{\langle 2k,2k,2j\rangle }
\delta(2j;n,n)^2 \delta(2k;n,n)^{-1}q^{-\frac{3}{4} n(n+2)} \right.\\
 & & \phantom{justasteptotheright} \left.
 \frac {\langle  l \rangle }{\langle 2k,n,l\rangle }
\left\langle 
\begin{array}{ccc} 2k & 2k & 2j \\ n & n & l \end{array}\right\rangle
\right) J'_{K,l}(q) .
\end{eqnarray*} 

Given a polynomial $f \in \Q[q^{\pm 1/4}]$, 
let $\ell_+[f]$ be the coefficient of the term of highest degree.
Since $\ell_+[\langle n \rangle^2] = 1$ and 
\begin{equation*}
\langle n\rangle ^2J'_{M(K), n}(q) 
= \langle n\rangle  \sum_{\substack{j,k=0 \\ j\le 2k}}^n \sum_{\substack{|2k-n| \le l \le 2k+n \\ n+l:even}}
g(j,k,l;q)
= \sum_{\substack{j,k=0 \\ j\le 2k}}^n \sum_{\substack{|2k-n| \le l \le 2k+n \\ n+l:even}}
\langle n\rangle  g(j,k,l;q), 
\end{equation*}
we have that 
\[\ell_+[  J'_{M(K), n}(q) ] 
= \ell_+\left[ \langle n \rangle^2 J'_{M(K), n}(q) \right] 
= \ell_+\left[ \sum_{\substack{j,k=0 \\ j\le 2k}}^n \sum_{\substack{|2k-n| \le l \le 2k+n \\ n+l:even}}
\langle n\rangle  g(j,k,l;q)\right].\]
It follows from the proof of Proposition~\ref{maxdeg_Mazur_normalized} that we have:
\begin{eqnarray*}
d_+[\langle n \rangle J'_{M(K), n}(q)] 
&=& d_+\left[ \sum_{\substack{j,k=0 \\ j\le 2k}}^n \sum_{\substack{|2k-n| \le l \le 2k+n \\ n+l:even}}
\langle n\rangle  g(j,k,l;q)\right] \\
&=& \left\{\begin{array}{ll}
d_+[ \langle  n \rangle  g(0, n, 3;q)] & \textrm{if}\ \alpha_1 >  0\\
d_+[ \langle  n \rangle  g(0, n, n;q)] & \textrm{if}\ \alpha_1\le 0
 \end{array}
      \right. \nonumber
\end{eqnarray*} 
Therefore, in the case of $\alpha_1 >  0$, 
we need to compute $\ell_+[g(0, n, 3n;q)]$, 
and in the case of $\alpha_1 \le 0$, 
we need to compute $\ell_+[g(0, n, n;q)]$. 
To do so, we first observe:
\begin{eqnarray*}
\langle  n \rangle  g(0, n, 3n;q)&:=&\langle  n \rangle  \left(
\frac {\langle  2n \rangle }{\langle n,n,2n\rangle } \frac {\langle  0 \rangle }{\langle n, n, 0\rangle } 
\frac { \left\langle 
\begin{array}{ccc} 2n & 2n & 0 \\ n & n & n \end{array}\right\rangle}
{\langle 2n, 2n, 0\rangle }
\delta(0; n, n)^2 \delta(2n; n, n)^{-1}q^{-\frac{3}{4} n(n+2)} \right.\\
 & &\phantom{justasteptotheright} \left.
 \frac {\langle  3n \rangle }{\langle 2n, n, 3n\rangle }
\left\langle 
\begin{array}{ccc} 2n & 2n & 0 \\ n & n & 3n \end{array}\right\rangle
\right) J'_{K, 3n}(q) 
\end{eqnarray*} 
and
\begin{eqnarray*}
\langle  n \rangle  g(0, n, n;q)&:=&\langle  n \rangle  \left(
\frac {\langle  2n \rangle }{\langle n,n,2n\rangle } \frac {\langle  0 \rangle }{\langle n, n, 0\rangle } 
\frac { \left\langle 
\begin{array}{ccc} 2n & 2n & 0 \\ n & n & n \end{array}\right\rangle}
{\langle 2n, 2n, 0\rangle }
\delta(0; n, n)^2 \delta(2n; n, n)^{-1}q^{-\frac{3}{4} n(n+2)} \right.\\
 & & \phantom{justasteptotheright}\left.
 \frac {\langle  n \rangle }{\langle 2n, n, n\rangle }
\left\langle 
\begin{array}{ccc} 2n & 2n & 0 \\ n & n & n \end{array}\right\rangle
\right) J'_{K, n}(q) .
\end{eqnarray*} 
Simple computation gives us the following: 
\[
\delta(0; n, n) = (-1)^nq^{\frac{1}{4}n^2 + 2n},\ 
\delta(2n; n, n) = q^{-\frac{1}{4}n^2}, 
\]
\[
\langle n\rangle  = (-1)^n[n+1],\ \langle 2n\rangle  = \langle n, n, 2n\rangle  = \langle 2n, n, n\rangle  = \langle 2n, 2n, 0\rangle  = [2n+1], 
\]
\[
 \langle 3n \rangle  = \langle 2n, n, 3n\rangle  = (-1)^{3n}[3n+1],\ \langle n, n, 0\rangle  = (-1)^n[n+1], 
\]
\[
\left\langle \begin{array}{ccc} 2n & 2n & 0 \\ n & n & n \end{array}\right\rangle 
= [2n+1],\quad 
\left\langle \begin{array}{ccc} 2n & 2n & 0 \\ n & n & 3n \end{array}\right\rangle
= (-1)^{3n}[3n+1].
\]
Hence we have: 
\begin{align*}
 \ell_+[ J'_{M(K), n}(q) ]  
 &= \ell_+\left[\langle  n \rangle  g(0, n, 3n;q) \right] \\
 &= (-1)^{3n}\ell_+\left[J'_{K, 3n}(q)\right] \quad \mathrm{for}\quad  \alpha_i >  0, 
  \end{align*} 
and 
\begin{align*}
 \ell_+[ J'_{M(K), n}(q) ]  
 &= \ell_+\left[\langle  n \rangle  g(0, n, n;q) \right] \\
 &= (-1)^n \ell_+\left[J'_{K, n}(q)\right]\quad \mathrm{for}\quad  \alpha_i \le 0. 
  \end{align*}
Since $K$ satisfies the Sign Condition and the parity of $3n$ and $n$ are the same, 
$M(K)$ also satisfies the Sign Condition. 
\end{proof}

\section{Computations of slopes and Euler characteristics for Mazur doubles}
\label{essential_surfaces}

\subsection{Exteriors of Mazur doubles knots and those of two-bridge links}
\label{exterior}

We start with a $2$--bridge link $k_1 \cup k_2$, which is expressed as 
$[2, 1, 4]$ depicted in Figure~\ref{2_1_4} below. 
Then $k_2$ lies in an unknotted solid torus $V = S^3 - \mathrm{int}N(k_1)$. 

\begin{figure}[!ht]
\includegraphics[width=0.6\linewidth]{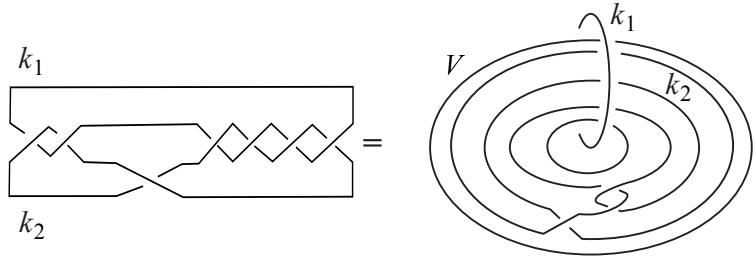}
\caption{$k_1 \cup k_2$ is a two bridge link $[2, 1, 4] $.}
\label{2_1_4}
\end{figure}

Let us take  preferred meridian-longitude pairs 
$(\mu_1, \lambda_1)$, $(\mu_2, \lambda_2)$ of $k_1$, $k_2$, respectively. 
Then take a faithful embedding 
$f \colon V \to S^3$ which sends the core of $V$ to a knot $K$ and 
$f(\mu_1) = \lambda_K$ and $f(\lambda_1) = \mu_K$, 
where $(\mu_K, \lambda_K)$ is a preferred meridian-longitude pair of $K$. 
The image $f(k_2)$ is $M(K)$, 
the Mazur double of $K$. 

Hence,  
the exterior of $M(K)$ is the union of 
the exterior $E(K)$ and $V - \mathrm{int}N(k_2)$; 
the latter is the exterior of the two-bridge link $k_1 \cup k_2$, 
which is expressed as $[2, 1, 4]$. 

In the following subsections~\ref{sec:two-bridge_link}--\ref{sec:Mazurgorithm}, 
we will investigate essential surfaces in the exterior $E(k_1 \cup k_2)$ of a two-bridge link 
$[2, 1, 4]$ in details. 

\subsection{Essential surfaces in two-bridge link exteriors}
\label{sec:two-bridge_link}
Here we catalogue all the properly embedded essential surfaces in the exterior of the two-bridge link $[2, 1, 4]$ for integers.

Hatcher-Thurston show how a certain collection of ``minimal edge paths'' in the Farey diagram from $1/0$ to $p/q$ are in correspondence with the properly embedded incompressible and $\bdry$--incompressible surfaces with boundary in the exterior of the two bridge knot $\calL_{p/q}$ \cite{HT}.
Floyd-Hatcher extend this to two-bridge links of two components \cite{FH} from which Hoste-Shanahan discern the boundary slopes of such surfaces \cite{HS}, building upon work of Lash \cite{Lash}.  

Here, for use with satellite constructions, 
we use the works of Floyd-Hatcher \cite{FH} and Hoste-Shanahan\cite{HS} to catalog all the properly embedded essential surfaces in the exterior of the Mazur double $M(K)$, 
their Euler characteristics, their boundary slopes, 
and number of boundary components.

\begin{remark}
While \cite{FH} uses the continued fraction convention $[x_1, x_2, \dots, x_n] = 1/(x_1 + 1/(x_2 + \dots 1/x_n))$, \cite{HS} appears to use the convention $[x_0, x_1, x_2, \dots, x_n] = x_0 + 1/(x_1 + 1/(x_2 + \dots 1/x_n))$.  
To remain consistent with this notation and the depiction of $\calL_{5/14}$ in \cite[Figure 1]{HS}. 
\end{remark}

We refer the reader to both the original paper \cite{FH} and Hoste-Shanahan's recounting of it \cite[Section 2]{HS} for details on the Floyd-Hatcher algorithm.  
Here we briefly recall the algorithm and quickly work through the application of it for 
the Mazur link $\calL_{5/14}$.

\subsection{The Algorithm}
\label{algorithm}
Figure~\ref{fig:FareyDiagrams} shows three diagrams. 
The diagram $D_1$ is the common Farey diagram.  
Pair adjacent triangles into quadrilaterals containing a diagonal so that a vertex is an endpoint of either all or none of the diagonals of the incident quadrilaterals.  
The diagram $D_0$ is obtained by switching the diagonal in each of the quadrilaterals.   
The diagram $D_t$ is obtained by replacing these diagonals with inscribed quadrilaterals.  
Actually, $D_t$ represents a parameterized family of diagrams for $t \in [0,\infty]$: 
with appropriate parameterizations of the edges of the quadrilaterals by $[0,1]$ the vertices of the inscribed quadrilaterals in $D_t$ are located at either $t$ or $1/t$.  
The diagrams $D_0=D_\infty$ and $D_1$ arise as limits where the inscribed quadrilaterals degenerate to diagonals.  
The edges of $D_1$ are labeled $A$ and $C$, 
the edges of $D_0=D_\infty$ are labeled $B$ and $D$, 
and these induce labels on $D_t$.   
Orientations are chosen on a basic set of edges in $D_t$ and passed to the rest of the edges of $D_t$ by the action of the M\"obius transformations in which the ideal triangle with vertices $\{1/0,0/1,1/1\}$ is a fundamental domain.  
We omit the orientations in Figure~\ref{fig:FareyDiagrams}; see \cite{HS} for details.
\begin{figure}[H] 
	\centering
	\includegraphics[width=.9\textwidth]{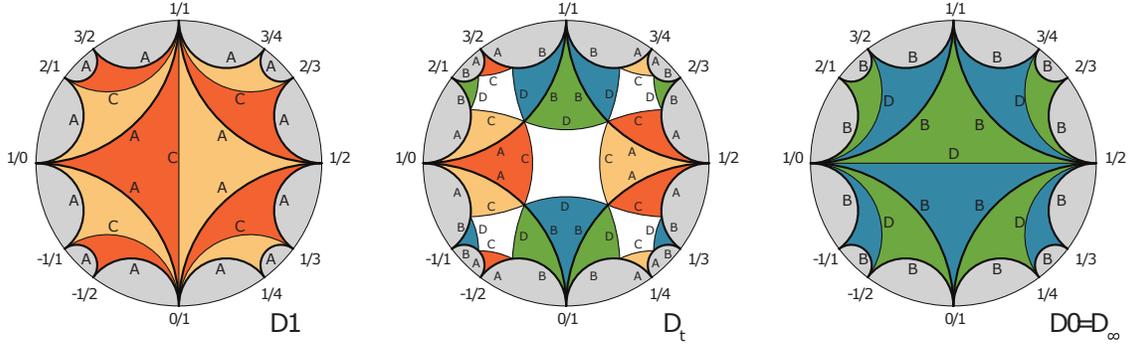}
	\caption{The diagrams $D_1$, $D_t$, and $D_0=D_\infty$, cf. \cite[Figures 3 and 4]{HS}. }
	\label{fig:FareyDiagrams}
\end{figure}
For a two bridge link $\calL_{p/q}$ (where $q$ is even), 
Floyd-Hatcher show that a properly embedded essential surface in the exterior of the link is carried by one of finitely many branched surfaces associated to ``minimal edge paths'' in $D_t$ from $1/0$ to $p/q$.  
A {\em minimal edge path} in $D_t$ is a consecutive sequence of edges of $D_t$ 
(ignoring their orientations) such that the boundary of any face of $D_t$ contains at most one edge of the path. 
Then for each minimal edge path, a branched surface is assembled from the sequence of edges by stacking four blocks of basic branched surface $\Sigma_A, \Sigma_B, \Sigma_C, \Sigma_D$ corresponding to the labels $A,B,C,D$ that are positioned according to the endpoints and orientation of its edge and whether $t< 1$ or 
$t > 1$.  
These blocks of basic branched surfaces are illustrated in Figure~\ref{fig:branchtypes} for 
$t > 1$
(cf.\ \cite[Figure 2]{HS} and \cite[Figure 3.1]{FH}) and are weighted in terms of the parameters 
$\alpha > \beta>0$ where  $t = \alpha/\beta$
and the extra integral parameter $n$ between $0$ and $\beta$ for $\Sigma_A$ or between $0$ and $\alpha-\beta$ for $\Sigma_D$. 
(This extra parameter $n$ allows for the construction of homeomorphic but non-isotopic surfaces with the same boundary slopes, see \cite{HS,FH}.) 
For $t < 1$, the blocks are rotated $180^\circ$ corresponding to an exchange of the components of $\calL_{p/q}$ and the parameters $\alpha$ and $\beta$ are swapped in the figure.  

\begin{figure}[H] 
\centering
\includegraphics[width=4.75in]{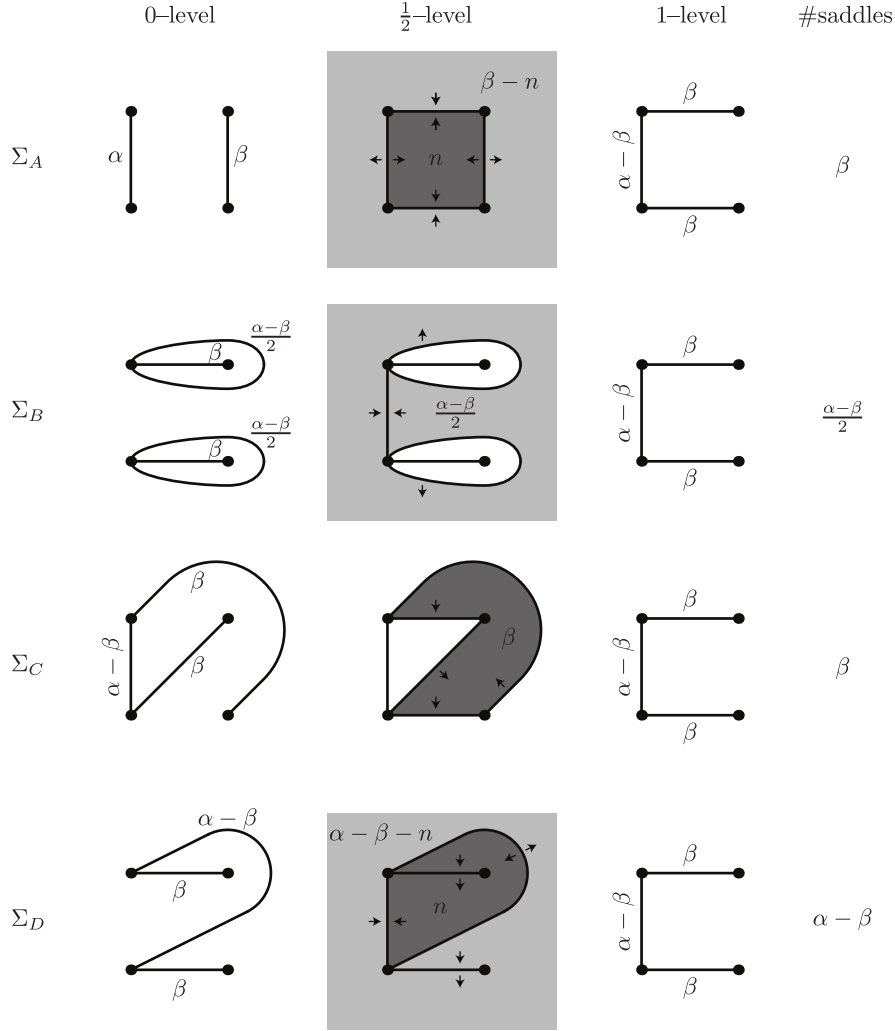}
\caption{The four basic weighted branched surfaces (reproduced from \cite[Figure 2]{HS}, see also \cite[Figure 3.1]{FH}) along with the corresponding number of saddles for the carried surface, when $\alpha\geq \beta$.  
When $\alpha < \beta$, rotate the images $180^\circ$ and swap $\alpha$ and $\beta$.}
\label{fig:branchtypes}
\end{figure}

 In this manner, every minimal edge path in $D_t$ for $t \in (0,1) \cup (1, \infty)$ produces a weighted branched surface, 
 with weights in terms of the parameters $\alpha$ and $\beta$ such that $t=\alpha/\beta$ 
 (along with auxiliary parameters for instances of the blocks $\Sigma_A$ and $\Sigma_D$). 
These minimal edge paths $\gamma$ in $D_t$ with their parameters $\alpha, \beta$ describe specific surfaces $F_{\gamma, \alpha, \beta}$ 
which may have multiple components and may be non-orientable. 
If it is non-orientable, 
then we may replace $F_{\gamma, \alpha, \beta}$ by the boundary of a tubular neighborhood (a twisted $I$--bundle over $F_{\gamma, \alpha, \beta}$), 
which is orientable and associated with parameters $2\alpha, 2\beta$; 
so the resulting orientable essential surface is associated with $F_{\gamma, 2\alpha, 2\beta}$. 
In the following we omit parameters $\alpha, \beta$ and assume that $F_{\gamma}$ is orientable, 
but it may have multiple components.

Taking the limits $t \to 0$ or $t \to \infty$ so that $\alpha=0$ or $\beta=0$ produces surfaces associated to minimal edges paths in $D_0 = D_\infty$.   
Taking the limits $t \to 1$ so that $\alpha = \beta$ also produces surfaces associated to minimal edge paths in $D_1$.  
However, since $\alpha-\beta=0$ in this case, 
the basic surface $\Sigma_A$ with its extra parameter $n$ may be used in place of $\Sigma_D$ to produce more surfaces.   

\medskip

Floyd and Hatcher \cite{FH} establish the following classification of essential surfaces in the exterior of two-bridge links. 

\begin{theorem}[\cite{FH}]
\label{FloydHatcher}
Let $\calL_{p/q}$ be a two-bridge link (with $q$ even). 
The orientable incompressible and meridionally incompressible surfaces in $S^3 - \nbhd(\calL_{p/q})$ without peripheral components are (up to isotopy) exactly the orientable surfaces carried by the collection of branched surfaces associated to minimal edge paths in $D_t$ from $1/0$ to $p/q$ for $t \in [0,\infty]$. 
\end{theorem}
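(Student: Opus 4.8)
The plan is to follow the Hatcher--Thurston strategy for two-bridge knots \cite{HT} and adapt it to the link setting as in Floyd--Hatcher \cite{FH}. First I would place $\calL_{p/q}$ in plat (bridge) position with respect to a Morse height function $h\colon S^3 \to \R$, so that the exterior $S^3 - \nbhd(\calL_{p/q})$ is cut by the preimages $h^{-1}(t)$ into a stack of four-punctured spheres (Conway spheres) whose successive gluings are dictated by the continued fraction expansion of $p/q$. The key combinatorial input is that isotopy classes of essential arc systems in a four-punctured sphere are parametrized by slopes in $\Q \cup \{1/0\}$, i.e.\ by vertices of the Farey diagram $D_1$, and that an elementary saddle move between two consecutive levels changes the recorded slope across a single edge. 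The goal is then to encode a given essential surface as an edge path running from the slope $1/0$ at one end of the plat to the slope $p/q$ prescribed by the link.

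Next I would take an arbitrary orientable surface $S$ that is incompressible and meridionally incompressible in the exterior and isotope it into standard position relative to $h$: transverse to the level spheres except at finitely many index-one (saddle) tangencies, and meeting each level four-punctured sphere in a taut family of arcs and circles. Here incompressibility is used to remove all trivial and compressible circles, while meridional incompressibility removes $\bdry$-parallel arcs and forbids the boundary-compressions that would permit an isotopy reducing the number of saddles. After this reduction the slopes recorded on the successive level spheres trace out an edge path $\gamma$ from $1/0$ to $p/q$, and the very tautness that prevents further simplification is exactly the minimality condition: no face of the diagram meets $\gamma$ in more than one edge. This normalization step is the main obstacle, since one must show that $S$ cannot be simplified further precisely when the recorded path is minimal, and this equivalence between a topological efficiency property and a purely combinatorial one is where the real work lies.

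With $\gamma$ in hand I would reassemble a branched surface $B_\gamma$ by stacking, for each edge of $\gamma$, the corresponding basic block $\Sigma_A,\Sigma_B,\Sigma_C$, or $\Sigma_D$ according to the edge's label and orientation, and verify that $S$ is fully carried by $B_\gamma$. The passage from the rigid diagram $D_1$ to the one-parameter family $D_t$ (with the degenerate limits $D_0 = D_\infty$ and $D_1$) is the device that records the extra freedom present for links but not for knots: the two cusps of $\calL_{p/q}$ allow the weights on the two sides of each block to differ, and writing $t = \alpha/\beta$ tracks this relative weighting, with the limits producing the diagonal exchanges. When $S$ is only carried in non-orientable form, I would replace it by the boundary of a twisted $I$-bundle neighborhood, which is orientable and carried with the doubled weights $2\alpha, 2\beta$, exactly as in the discussion preceding Theorem~\ref{FloydHatcher}.

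Finally I would prove the converse, that every orientable surface carried by one of these branched surfaces is genuinely essential. The standard route is to verify that each assembled $B_\gamma$ is an \emph{incompressible branched surface}: it carries no disk or half-disk of contact, no monogon, and no Reeb-type complementary region, and its complementary regions are $\bdry$-incompressible. Minimality of the edge path is once more exactly what rules out these forbidden disks, so the two directions of the classification are governed by the same condition. Checking these branched-surface hypotheses block-by-block along an arbitrary minimal path, and confirming that the slope recorded at the terminal end matches $p/q$ for the link as drawn, would complete the argument.
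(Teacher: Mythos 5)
First, a point of comparison: the paper itself does not prove this statement. Theorem~\ref{FloydHatcher} is quoted from Floyd--Hatcher \cite{FH} and used as a black box; the paper's own work begins only afterward, when the classification is applied to the specific link $[2,1,4]$. So your proposal can only be measured against the argument of \cite{FH} (and its ancestor \cite{HT}), and against that standard it is an outline with genuine gaps rather than a proof.

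Two gaps are substantive. First, your opening parametrization is the knot-case one: you assert that the relevant arc and curve systems on a level four-punctured sphere are parametrized by slopes in $\Q\cup\{1/0\}$, i.e.\ by vertices of $D_1$, and you bring in the family $D_t$ only in your third paragraph as a ``device'' recording extra freedom. For a two-component link this is backwards: the four punctures come in two pairs belonging to the two components, a curve system carries two independent weights $(\alpha,\beta)$, and the diagrams $D_t$ with $t=\alpha/\beta$ are precisely the parameter space of projective weighted curve systems. The edge path attached to a surface must therefore be constructed in $D_t$ from the outset; there is no well-defined path in $D_1$ that one can ``correct'' later, and an argument organized around $D_1$ does not specialize to the link case. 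Second, and more seriously, both directions of the theorem are deferred exactly at their crucial points. In the forward direction you concede that the equivalence between ``cannot be simplified further'' and ``the recorded path is minimal'' is ``where the real work lies'' --- but that equivalence \emph{is} the theorem, so conceding it leaves nothing proved. In the converse direction, your plan to verify the Floyd--Oertel incompressible-branched-surface criteria ``block-by-block'' cannot succeed as stated: the blocks $\Sigma_A,\Sigma_B,\Sigma_C,\Sigma_D$ appearing along a non-minimal path are locally identical to those along a minimal one, yet branched surfaces assembled from non-minimal paths do carry compressible surfaces. Whatever rules out compressing disks must use minimality globally (the absence of a face of $D_t$ meeting the path in two edges), so a purely local, block-by-block verification is structurally incapable of finishing this step. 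Finally, your gloss of meridional incompressibility as forbidding $\bdry$-compressions conflates two different conditions: meridional incompressibility concerns disks in $S^3$ meeting the link once, and its role in the normalization is to let one remove meridionally bounding circles of intersection with the level spheres; $\bdry$-incompressibility is a consequence, not a hypothesis (compare the remark following the theorem in the paper).
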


\begin{remark}  
Let $S$ be a properly embedded surface in the exterior of a link $L$ in $S^3$.
Then $S$ is {\em meridionally incompressible}  if for any embedded disk $D$ in $S^3$ with $D \cap S = \bdry D$ such that $L$ intersects $D$ transversally in a single interior point, 
there is an annulus embedded in $S$ whose boundary is $\bdry D$ and a component of $\bdry S$ that is a meridian of $L$.
A component of $S$ is {\em peripheral} if it is isotopic through the exterior of $L$ into $\bdry \nbhd(L)$.
If $S$ has a $\bdry$--compressing disk, 
then either $L$ is a split link or the $\bdry$-compressible component of $S$ is either compressible or peripheral.  
Hence the surfaces in Theorem~\ref{FloydHatcher} are also $\bdry$--incompressible.
\end{remark}

\subsection{Euler characteristics of carried surfaces}

The Euler characteristic of a surface carried by one of these weighted branched surfaces associated to an edge path in $D_t$ may be calculated from the branch pattern associated to the edge path and the weights $\alpha$ and $\beta$. 

\begin{lemma}[\cite{BMT_tgW}]
Let $S$ be the surface carried by the weighted branched surface associated to an edge path $\gamma$ in $D_t$ where $t = \alpha/\beta$. 
If $\alpha \geq \beta$, then  \[\chi(S) = (\alpha+\beta) - \sum s_i(\alpha,\beta)\] where $s_i(\alpha,\beta)$ is the number of saddles of the surface carried by the basic branched surface associated to the label of the $i$th edge of $\gamma$ and weighted by $\alpha$ and $\beta$ as shown in Figure~\ref{fig:branchtypes}.  
If $\alpha<\beta$, exchange $\alpha$ and $\beta$.
\end{lemma}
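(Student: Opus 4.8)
The plan is to equip the carried surface $S$ with a handle decomposition read off from the vertical stacking of the basic blocks $\Sigma_A,\Sigma_B,\Sigma_C,\Sigma_D$ that assemble the branched surface along the edge path $\gamma$, and then to compute $\chi(S)$ as an alternating count of handles. I would use the stacking direction as a height (Morse) function on $S$: away from the branch locus each block carries a product of its cross-sectional $1$--manifold with an interval, and the only critical behavior occurs at the saddles recorded in Figure~\ref{fig:branchtypes}.

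First I would analyze the cross-section near the initial vertex $1/0$ of $\gamma$ and show that, under the weight conventions $t=\alpha/\beta$ with $\alpha\ge\beta$, the surface $S$ begins as exactly $\alpha+\beta$ parallel sheets; each such sheet is a disk (a $0$--handle) contributing $+1$ to the Euler characteristic, for a total initial contribution of $\alpha+\beta$. This is the step where the precise branch weights attached to the labels $A,B,C,D$ enter, and I expect it to require the bookkeeping from Hoste--Shanahan's description of how the weights $\alpha,\beta$ distribute across the four block types.

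Next I would verify that each saddle of each basic block is an index-$1$ handle, i.e. a band attached to the sheets already constructed, so that passing through a single saddle decreases $\chi$ by $1$, and that no local maxima (index-$2$ handles) or further local minima occur, since between consecutive saddles the carried surface is a product $(\text{cross-section})\times I$ and hence contributes nothing to $\chi$. Summing over the blocks of $\gamma$, the through-going product part contributes the initial $\alpha+\beta$ while the saddles contribute $-\sum_i s_i(\alpha,\beta)$, giving $\chi(S)=(\alpha+\beta)-\sum_i s_i(\alpha,\beta)$. Equivalently, one may argue purely additively: cutting $S$ along the horizontal interfaces between consecutive blocks yields pieces $S_1,\dots,S_m$ glued along collections of arcs, whence $\chi(S)=\sum_i\chi(S_i)-\sum_i(\#\,\text{arcs in the }i\text{th interface})$ telescopes to the same formula.

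The case $\alpha<\beta$ then follows verbatim after the $180^\circ$ rotation of the blocks and the exchange of $\alpha$ and $\beta$ prescribed in Figure~\ref{fig:branchtypes}, since rotation is a homeomorphism preserving $\chi$ and the counts $s_i$ are defined symmetrically in the two parameters. The main obstacle is the first step: matching the saddle counts $s_i(\alpha,\beta)$ drawn in Figure~\ref{fig:branchtypes} to genuine index-$1$ handles and confirming that the initial sheet count is exactly $\alpha+\beta$ regardless of the label carried by the first edge of $\gamma$. Once the local model of each block is pinned down as a product region punctuated by band (saddle) attachments, the global count is a routine consequence of the additivity of the Euler characteristic.
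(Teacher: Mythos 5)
Note first that this paper does not prove the lemma; it is quoted from \cite{BMT_tgW}, and the argument there (following the structure of \cite{FH}) is precisely the additive computation you offer as your second formulation: cut $S$ along the level $4$--punctured spheres, observe that the piece carried in each block consists of $s_i(\alpha,\beta)$ saddle disks together with product rectangles, so that it has Euler characteristic $(\alpha+\beta)-s_i(\alpha,\beta)$ and meets each interface in $\alpha+\beta$ arcs (the surface has $\alpha$ sheets running along one component of $\mathcal{L}_{p/q}$ and $\beta$ along the other, giving $2\alpha+2\beta$ arc endpoints per level), and then inclusion--exclusion telescopes to $(\alpha+\beta)-\sum s_i(\alpha,\beta)$. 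That part of your proposal is correct and is essentially the cited proof.

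Your primary Morse-theoretic narrative, however, has a genuine error: the claim that beyond the initial $\alpha+\beta$ disks ``no local maxima (index-$2$ handles) or further local minima occur.'' The Floyd--Hatcher carried surface has bridge-parallel disk pieces in the balls at \emph{both} ends of the edge path --- the cross-section arcs at the $p/q$ end bound disks in that ball, which is exactly why admissible paths must terminate at $p/q$ --- so any height function has critical behavior at both ends, and a count assigning $+1$ to every end disk and $-1$ to every saddle would give $2(\alpha+\beta)-\sum s_i$, not the stated formula. Concretely, for the path $\gamma_7$ with $(\alpha,\beta)=(2,0)$ the carried surface is a twice-punctured torus with two disks in each end ball and four saddles: the naive count gives $2+2-4=0$, whereas $\chi=-2$. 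The resolution is that each end disk is attached to the rest of $S$ along a single arc of the level sphere (the rest of its boundary lies on $\partial N(\mathcal{L}_{p/q})$), and attaching a disk along one boundary arc is $\chi$--neutral, contributing $1-1=0$; equivalently, Morse theory for a surface whose boundary wraps over the bridges requires counting boundary tangencies, not just interior critical points. With this correction the sequential build credits $+(\alpha+\beta)$ to the disks at the $1/0$ end, $-1$ to each saddle, and $0$ to the product pieces \emph{and} to the disks at the $p/q$ end, which is exactly what your additive inclusion--exclusion computes. So your proof is repaired by deleting the ``no index-$2$ handles'' claim and running the additive version with the end-ball disks included.
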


\subsection{Boundary slopes and count of boundary components}
Note that surfaces carried by these branched surfaces are given by non-negative integral weights $\alpha$ and $\beta$ (with the auxiliary integral parameters $n$ as needed), 
and these weights indicate the algebraic (and geometric) intersection numbers of the surface with the meridians $\mu_1, \mu_2$ of the two components of $\calL_{p/q}$.

Hoste and Shanahan use a certain blackboard framing $\lambda_1, \lambda_2$ of the two components of $\calL_{p/q}$ to further keep track of how the branched surfaces associated to minimal edge paths in $D_t$ intersect this framing.  
They then determine how to correct this framing to the canonical framings $\lambda_1^0, \lambda_2^0$ of the individual unknot components of the two-bridge link. 
From this, one then obtains the boundary slopes of the carried surfaces in terms of the canonical framings of the components.

Furthermore, by a calculation in the homology of a torus, the greatest common divisor ($\gcd$) 
of the algebraic intersection numbers of the boundary of a surface with the meridian and longitudinal framing of a component of $\calL_{p/q}$ produces the number of boundary components of the surface meeting that component of $\calL_{p/q}$.

\subsection{Applying the Algorithm to the Mazur Link, the $2$--bridge link $ \lbrack  2, 1, 4 \rbrack  $}
\label{sec:Mazurgorithm}
Figure~\ref{fig:DtDiagram3-8} shows the portions of the diagrams $D_0=D_\infty$, $D_t$, 
and $D_1$ that carry the minimal edge paths from $1/0$ to $5/14$. 

\begin{figure}[H] 
	\centering
	\includegraphics[width=5in]{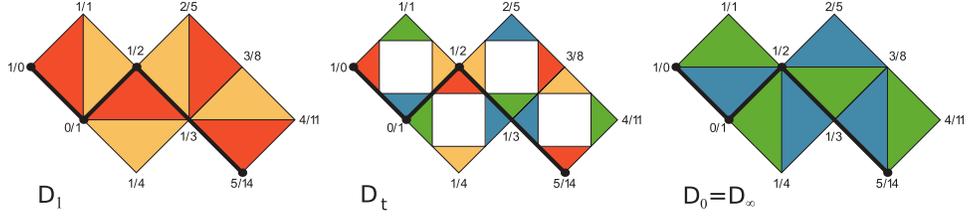}
	\caption{The diagrams $D_1$, $D_t$, and $D_0=D_\infty$ that carry the minimal edge paths from $\frac{1}{0}$ to $\frac{5}{14}$.}
	\label{fig:DtDiagram3-8}
\end{figure}

\begin{figure}[!ht]
	\includegraphics[width=.9\linewidth]{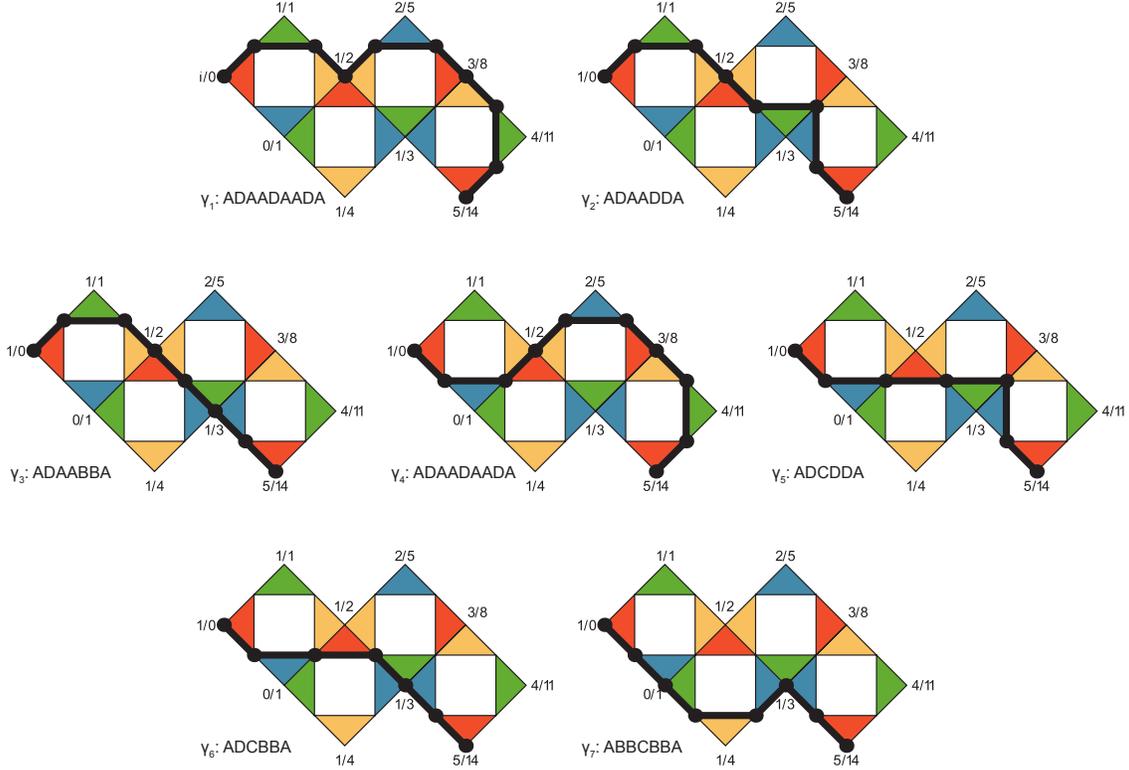}
	\caption{The minimal edge paths $\gamma_1, \dots, \gamma_7$ are shown  with their branch patterns. }
	\label{fig:edgepaths}
\end{figure}

Table~\ref{table:5/14paths} lists each minimal edge path as it appears in Figure~\ref{fig:edgepaths}, 
the branch pattern of the induced branched surface (i.e.\ the sequence of edge labels), 
and the Euler characteristic of the carried surface corresponding to weights $\alpha\geq \beta$.  
It also lists for each of these paths, when $\alpha \geq \beta \geq 0$, the boundary slopes of the carried surfaces relative to the canonical meridian-longitude framings of the two unknot components of the two-bridge links and the count of the number of  boundary components on each link component.  
These are also calculated from the given preliminary data of algebraic intersections of the boundary components with the meridians and blackboard framed longitudes and the boundary slopes in terms of the blackboard framing; 
refer to \cite{HS} for details. 
Note that when $\alpha > \beta=0$, the associated essential surface meets the second component in meridians except for path $\gamma_7$ where it is disjoint from the second link component. (This may be observed from the appearance of at least one $D$ block in the branch patterns for each of the paths except $\gamma_7$.) 
When $\alpha < \beta$ we may continue to use the table with $\alpha$ and $\beta$ swapped and with the two link components swapped.

\bigskip

\begin{table*}
	\centering
	\caption{\small First are 
		the minimal edge paths in $D_t$ from $1/0$ to $5/14$  as presented in Figure~\ref{fig:edgepaths} and
		the branch patterns of their associated branched surfaces.  
		Then, for $\alpha > \beta \geq 0$ so that $t = \alpha/\beta \in (1,\infty]$,  
		the Euler characteristics, boundary slopes, and number of boundary components for the carried surfaces are shown.
		The  boundary slopes and number of boundary components of the first and second link component are presented as pairs.  For $t \in [0,1)$, apply the homeomorphism of the two-bridge link that interchanges its two components, i.e.\ exchange coordinates and swap $\alpha$ and $\beta$; see Figure~\ref{2_1_4_t><1}.  For $\alpha = \beta$ so that $t=1$, the seven paths limit to five distinct paths in $D_1$ ($\gamma_1$, $\gamma_2=\gamma_3$, $\gamma_4$, $\gamma_5=\gamma_6$, and $\gamma_7$). The table continues to apply in this case.
	}
	\label{table:5/14paths} 
	\begin{sideways}
	\begin{tabular}{@{}M{20mm}M{30mm}M{25mm}M{20mm}M{30mm}M{35mm}@{}}
		\toprule
		edge path    & path picture & branch pattern  & $\chi$ & boundary slopes & number of boundary components \\
		\midrule
		$\gamma_1$  & \includegraphics[width=1in]{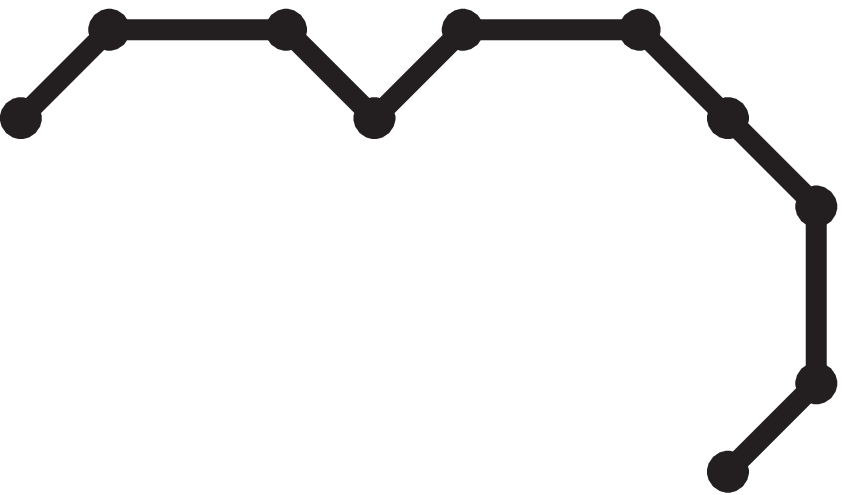} & $ADAADAADA$ & $-2\alpha - 2\beta$ 
			& $(3 \tfrac{\beta}{\alpha}, 3\tfrac{\alpha}{\beta})$ & $(\gcd(3\beta,\alpha), \gcd(3\alpha,\beta))$ \\ \addlinespace[0.5em]
		$\gamma_2$  & \includegraphics[width=1in]{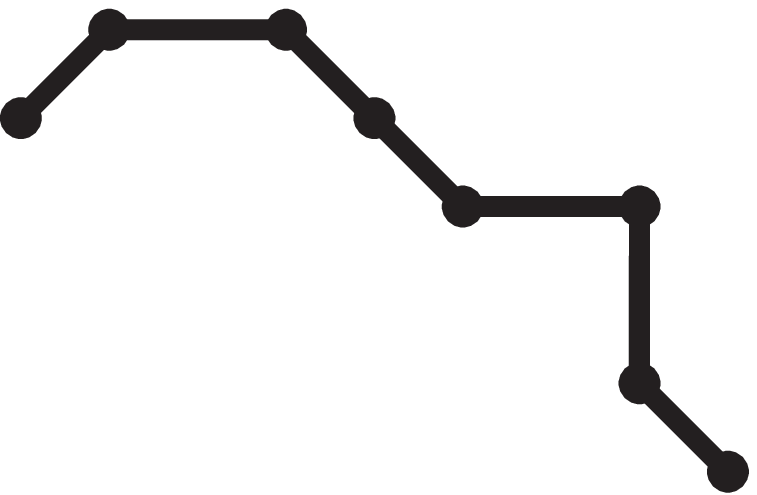} & $ADAADDA$ & $-2\alpha$ 
			& $(-\tfrac{\beta}{\alpha},-\tfrac{\alpha}{\beta})$ & $(\gcd(\beta,\alpha), \gcd(\alpha,\beta))$ \\ \addlinespace[0.5em]
		$\gamma_3$  & \includegraphics[width=1in]{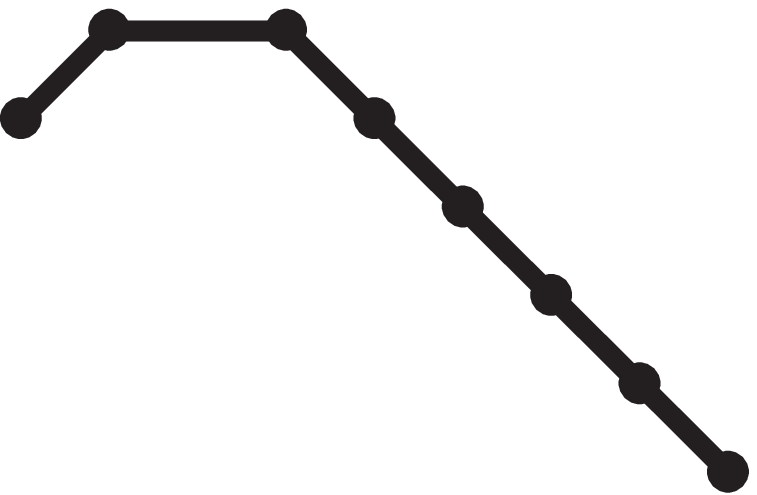} & $ADAABBA$ & $-\alpha - \beta$ 
			& $(\tfrac{\beta}{\alpha}-2,\tfrac{\alpha}{\beta}-2)$ & $(\gcd(\beta,\alpha), \gcd(\alpha,\beta))$\\ \addlinespace[0.5em]
		$\gamma_4$  & \includegraphics[width=1in]{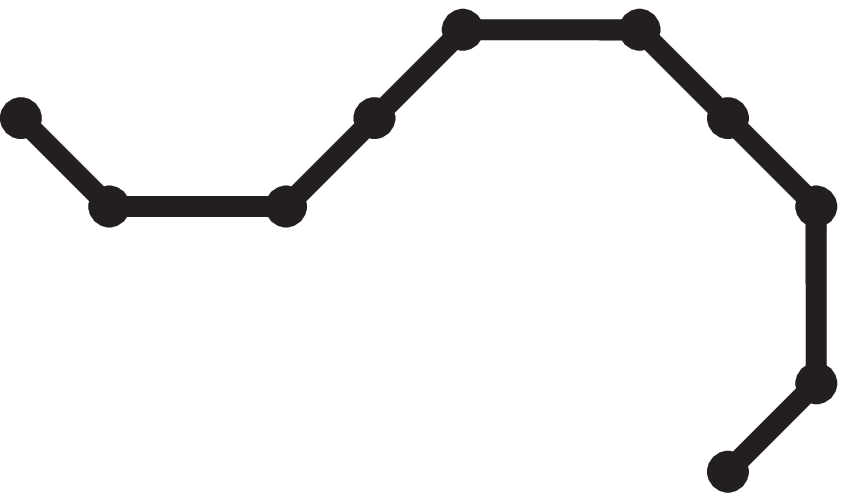} & $ADAADAADA$  & $-2\alpha -2\beta$ 
			& $(\tfrac{\beta}{\alpha}, \tfrac{\alpha}{\beta})$ & $(\gcd(\beta,\alpha), \gcd(\alpha,\beta))$\\ \addlinespace[0.5em]
		$\gamma_5$ &  \includegraphics[width=1in]{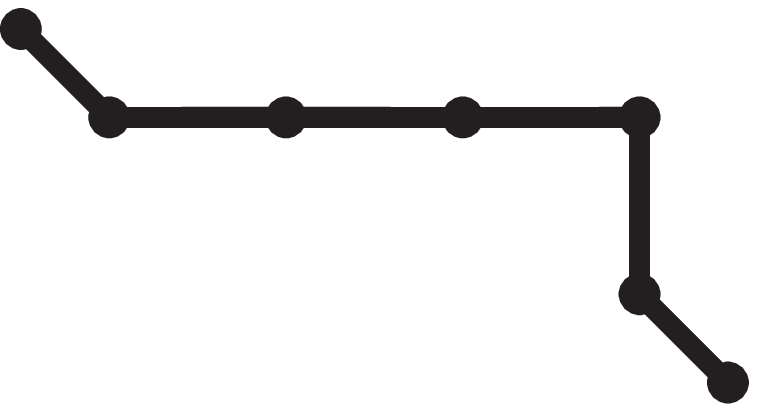} & $ADCDDA$ & $-2\alpha+\beta$ 
			& $(-3 \tfrac{\beta}{\alpha}, -3\tfrac{\alpha}{\beta}-2)$ & $(\gcd(3\beta,\alpha), \gcd(3\alpha,\beta))$ \\ \addlinespace[0.5em]
		$\gamma_6$ &  \includegraphics[width=1in]{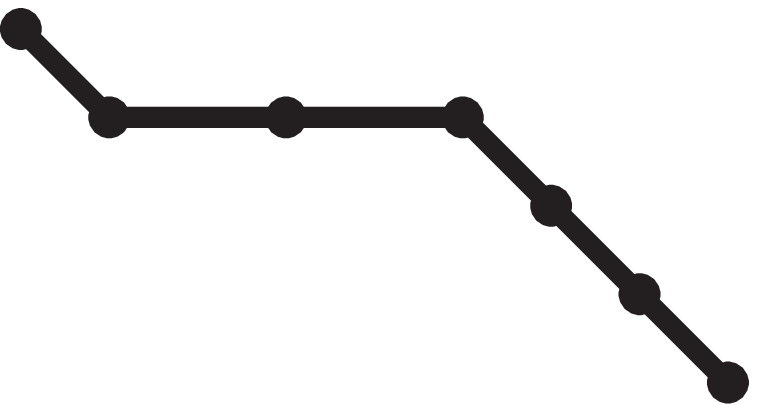} & $ADCBBA$ & $-\alpha$ 
			& $(-\tfrac{\beta}{\alpha}-2,-\tfrac{\alpha}{\beta}-4)$ & $(\gcd(\beta,\alpha), \gcd(\alpha,\beta))$\\ \addlinespace[0.5em]
		$\gamma_7$ &  \includegraphics[width=1in]{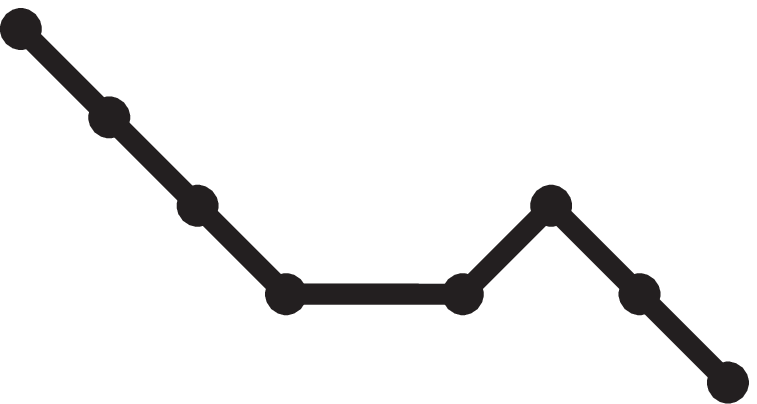} & $ABBCBBA$ & $-\alpha$ 
			& $(-5, -3)$  & $(\alpha, \beta)$\\ \addlinespace[0.5em]
		\bottomrule
	\end{tabular}
\end{sideways}
\end{table*}

\newpage 

\section{The Slope Conjecture for Mazur doubles of knots}
\label{sec:slope_conjecture_M}

In this section we prove the Slope Conjecture for Mazur doubles of knots, Theorem~\ref{thm:SC_and_SSC_M}(1). 

For a given knot $K$, 
recall that $N_K$ the smallest nonnegative integer such that 
$d_+[J_{K,n}(q)]$ is a quadratic quasi-polynomial 
$\delta_K(n)=a(n) n^2 +b(n) n +c(n)$ 
for $n \ge 2N_K+1$. 
If the period of $\delta_K(n)$ is less than or equal to $2$, 
then define $i \in \{1,2\}$ by $i\equiv n \pmod 2$ and 
put $a_i:=a (2N_K + i)$, $b_i:=b (2N_K + i)$.   

\begin{thm_slope_conjecture_M}
Let $K$ be a knot. 
We assume that the period of $\delta_K(n)$ is less than or equal to $2$. 
Assume that $b_i \le 0$, and that $b_i = 0$ implies $a_i \ne 0$.

 If $K$ satisfies the Slope Conjecture, 
then its Mazur double $M(K)$ also satisfies the Slope Conjecture. 
\end{thm_slope_conjecture_M}

\begin{proof}
Let $K$ be a knot such that $\delta_K(n)=a(n) n^2 +b(n) n+c(n)$is quadratic quasi-polynomial of period $\le 2$
with $b_i \le 0$.  
Then Proposition~\ref{maxdeg_Mazur} shows that the Jones slopes of $M(K)$ satisfy 
\[js_{M(K)} \subset \{ 9 js(K) , js(K)\}.\]
Let us find essential surfaces in $E(M(K))$ whose boundary slopes are these Jones slopes. 

Recall that $k_1 \cup k_2$ is a $2$--bridge link expressed as $[2, 1, 4]$ depicted in Figure~\ref{2_1_4_t><1} (Left). 
Let $(\mu_i, \lambda_i)$ be a preferred meridian-longitude pair of $k_i$. 
As in Figure~\ref{2_1_4_t><1} (Upper Right) we take a solid torus $V = S^3 - \mathrm{int}N(k_1)$ which contains $k_2$ in its interior; 
let $(\mu_V, \lambda_V)$ be the standard meridian-longitude pair of $V$, 
which satisfies $(\mu_V, \lambda_V) = (\lambda_1, \mu_1)$. 
Switching $k_1$ and $k_2$, 
we may also take a solid torus $V = S^3 - \mathrm{int}N(k_2)$ which contains $k_1$ in its interior (Lower Right) so that $(\mu_V, \lambda_V) = (\lambda_1, \mu_1)$. 
Since there is an isotopy of $S^3$ deforming $k_1$ to $k_2$, 
patterns $(V, k_2)$ is pairwise homeomorphic to $(V, k_1)$; 
see Figure~\ref{2_1_4_t><1} (Right). 

\begin{figure}[!ht]
\includegraphics[width=0.7\linewidth]{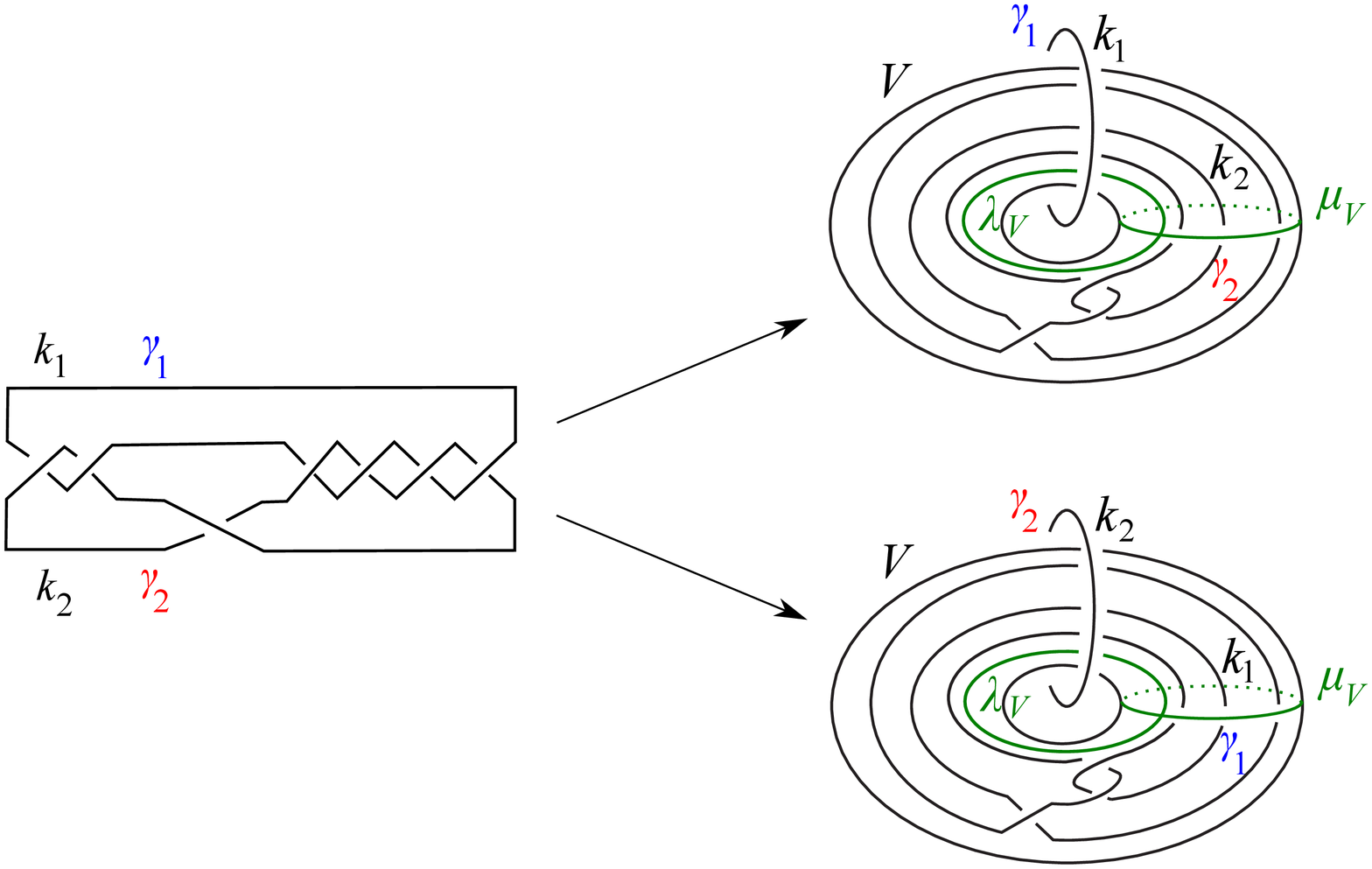}  
\caption{$k_1 \cup k_2$ is a two bridge link $[2, 1, 4]$; we can switch $k_1$ 
(with slope $\gamma_1$) and $k_2$ (with slope $\gamma_2$).}
\label{2_1_4_t><1}
\end{figure}

Let $f$ be a faithful embedding $f \colon V \to S^3$ which sends  the core of $V$ to $K$ and 
$f(\mu_V) = \mu_K$ and $f(\lambda_V) = \lambda_K$. 
Then $f(k_j) = M(K)$, 
the Mazur double of $K$.  
Thus the exterior 
$E(M(K))$ is the union of $E(K)$ and $X = f(V - \mathrm{int}N(k_j))$. 
$\partial X$ consists of two tori $T_M = \partial N(M(K))$ and $T_K = f(\partial V) = \partial E(K)$. 
Then $(f(\mu_j), f(\lambda_j))$ is a preferred meridian-longitude pair $(\mu_M, \lambda_M)$ of $M(K)$ ($j = 1, 2$). 

\medskip 

According to Proposition~\ref{maxdeg_Mazur} 
we divide into two cases depending upon $a_i > 0$ or $a_i \le 0$. 

\medskip

\noindent
\textbf{Case 1.\ $a_i > 0$.}\quad 
Since $K$ satisfies the Slope Conjecture, 
the Jones slope $4a_i$ is realized by a boundary slope of an essential surface $S_{K,i} \subset E(K)$.  

\begin{claim}
\label{F_i_case1}
There exists an essential surface $F_i$ in $V - \mathrm{int}N(k_2)$ 
\(resp.\ $V - \mathrm{int}N(k_1)$\) 
such that each component of $F_i \cap \partial V$ has slope $4a_i$ and 
each component of $F_i \cap \partial N(k_2)$ 
\(resp.\ $F_i \cap \partial N(k_2)$\) has $36a_i$ with $a_i \ge \frac{1}{12}$ \(resp.\ $0 < a_i \le \frac{1}{12}$\). 
\end{claim}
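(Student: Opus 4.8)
The plan is to produce $F_i$ from the catalogue of essential surfaces in the exterior of the two--bridge link $[2,1,4]$ recorded in Table~\ref{table:5/14paths}, by selecting an edge path and weights $\alpha,\beta$ whose carried surface has the prescribed pair of boundary slopes. Since $a_i>0$ both required slopes are positive, and inspecting the table the only path whose two boundary slopes are positive with product equal to $9$ is $\gamma_1$, with slopes $(3\beta/\alpha,\,3\alpha/\beta)$ relative to the canonical meridian--longitude framings of $k_1,k_2$ when $\alpha\ge\beta$. This product of $9$ is exactly the ratio we need, since on the companion side $F_i$ is to be glued to a slope--$4a_i$ surface in $E(K)$, while on the $M(K)$ side the resulting surface must carry the Jones slope $9\cdot 4a_i=36a_i$ supplied by Proposition~\ref{maxdeg_Mazur}.

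First I would fix the framing translation. Because $(\mu_V,\lambda_V)=(\lambda_1,\mu_1)$, a slope $r$ measured on $\partial V$ equals the slope $1/r$ measured against $(\mu_1,\lambda_1)$; hence asking that $F_i\cap\partial V$ have slope $4a_i$ is the same as asking the $k_1$--slope of the carried surface to be $1/(4a_i)$. Imposing $3\beta/\alpha=1/(4a_i)$ then forces $\alpha/\beta=12a_i$, whereupon the $k_2$--slope becomes $3\alpha/\beta=36a_i$, as claimed. The table is read with $\alpha\ge\beta$, i.e.\ $12a_i\ge 1$, which is precisely the hypothesis $a_i\ge\frac1{12}$. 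For $0<a_i\le\frac1{12}$ one instead has $\alpha\le\beta$, so I would switch to the homeomorphic pattern inside $V=S^3-\mathrm{int}N(k_2)$ (available because $(V,k_1)$ and $(V,k_2)$ are pairwise homeomorphic), which interchanges the two link components and the two weights; the identical computation with $\gamma_1$ then gives $k_2$--slope $1/(4a_i)$ and $k_1$--slope $36a_i$, with the admissibility condition now reading $a_i\le\frac1{12}$.

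To complete the construction I would observe that $4a_i=p/q$ is rational, so $12a_i$ is rational and integral weights $\alpha,\beta$ with $\alpha/\beta=12a_i$ exist (choosing any allowed value of the auxiliary $\Sigma_A$--parameter); if the carried surface $F_{\gamma_1,\alpha,\beta}$ is non--orientable I would replace it by the boundary of its twisted $I$--bundle, which has the same boundary slopes and weights $2\alpha,2\beta$. Essentiality is then automatic: Theorem~\ref{FloydHatcher} gives incompressibility and meridional incompressibility, and the accompanying remark upgrades this to $\partial$--incompressibility. I expect no genuine topological obstacle here, since the surface theory is supplied wholesale by Floyd--Hatcher and the slope data by Table~\ref{table:5/14paths}; the one step demanding care is the framing inversion $r\mapsto 1/r$, because misreading it would interchange the two cases and spoil the threshold at $a_i=\frac1{12}$.
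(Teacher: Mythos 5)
Your proposal is correct and follows essentially the same route as the paper: you select the surface $F_{\gamma_1}$ from Table~\ref{table:5/14paths}, invert the slope under the framing change $(\mu_V,\lambda_V)=(\lambda_1,\mu_1)$, solve $\alpha/\beta=12a_i$ to get boundary slopes $4a_i$ on $\partial V$ and $36a_i$ on the other component, and handle $0<a_i\le\frac{1}{12}$ by exchanging $k_1$ and $k_2$ via the symmetry of the link $[2,1,4]$, exactly as in the paper's proof. Your added remarks on the product-$9$ criterion singling out $\gamma_1$, rationality of the weights, orientability via the twisted $I$--bundle, and essentiality via Theorem~\ref{FloydHatcher} are all consistent with (and make explicit) what the paper leaves implicit.
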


\begin{proof}
Let us take an essential surface $F_{\gamma_1}$ in 
$S^3 - \mathrm{int}N(k_1 \cup k_2)$ 
described in Section~\ref{essential_surfaces}. 
Then it has a pair of boundary slopes 
$(3\frac{\beta}{\alpha}, 3\frac{\alpha}{\beta})$ on $\partial N(k_1),\  \partial N(k_2)$, 
i.e. $F_{\gamma_1}$ has boundary slopes $\frac{3\beta}{\alpha}$ on $\partial N(k_1)$ and 
$\frac{3\alpha}{\beta}$ on $\partial N(k_2)$.  
 
First assume that $a_i \ge \frac{1}{12}$. 
Then regard $S^3 - \mathrm{int}N(k_1 \cup k_2)$ as $V - \mathrm{int}N(k_2)$. 
Using the preferred meridian-longitude $(\mu_V, \lambda_V)$ instead of $(\mu_1, \lambda_1)$, 
$F_{\gamma_1} \cap \partial V$ has slope $\frac{\alpha}{3\beta}$. 
Choose $\alpha, \beta$ ($\alpha \ge \beta$) so that 
$\frac{\alpha}{3\beta} = 4a_i$, 
i.e. $\frac{\alpha}{\beta} = 12a_i \ge 1$. 
Let us denote $F_{\gamma_1}$ associated with $a_i$ by $F_i$.  
Then each component of $F_i \cap \partial V$ has slope $4a_i$,  
and each component of $F_i \cap \partial N(k_2)$ has slope $36a_i$ as desired. 

Next assume that $0 < a_i \le \frac{1}{12}$. 
Regard $S^3 - \mathrm{int}N(k_1 \cup k_2)$ as $V - \mathrm{int}N(k_1)$ by exchanging $k_1$ and $k_2$. 
Using the preferred meridian-longitude $(\mu_V, \lambda_V)$ instead of $(\mu_2, \lambda_2)$, 
$F_{\gamma_1} \cap \partial V$ has slope $\frac{\beta}{3\alpha}$. 
Choose $\alpha, \beta$ so that 
$\frac{\beta}{3\alpha} = 4a_i$
i.e. $\frac{\beta}{\alpha} = 12a_i \le 1$. 
Take $F_i$ as above so that 
each component of $F_i \cap \partial V$ has slope $4a_i$,  
and each component of $F_i \cap \partial N(k_1)$ has slope $36a_i$ as desired. 
\end{proof}

\medskip

Let us take the image $f(F_i)$ in $X = f(V - \mathrm{int}N(k_2))$, and denote it by $S_i$.  
By construction $S_i$ is essential in $X$ 
and each component of $S_i \cap T_K$ has slope $4a_i$ and each component of 
$S_i \cap T_M$ has slope $36a_i$. \par

To build a required essential surface $\tilde{S}_i \subset E(M(K))$ we take $m$ parallel copies 
$m S_i$ of the essential surface $S_i$ and $n$ parallel copies $n S_{K,i}$ of the essential surface $S_{K,i}$, 
and then glue them along their boundaries to obtain a connected surface 
$\tilde{S}_i = m S_i \cup n S_{K,i}$ in $E(M(K))$.  
Even when both $S_i$ and $S_{K,i}$ are orientable, 
$\tilde{S}_i$ may not be orientable. 
If $\tilde{S}_i$ is non-orientable, 
then consider a regular neighborhood of $\tilde{S}_i$ in $E(M(K))$, 
which is a twisted $I$--bundle of $\tilde{S}_i$ whose $\partial I$--subbundle 
is an orientable double cover of $\tilde{S}_i$. 
We use the same symbol $\tilde{S}_i$ to denote this $\partial I$--subbundle. 
Note that $S_{K,i}$ and $S_i$ are orientable,  
so $\tilde{S}_i \cap E(K)$ consists of parallel copies of $S_{K,i}$ and similarly 
$\tilde{S}_i \cap X$ consists of parallel copies of $S_i$.  
Since $\partial E(K)$ is incompressible in $E(M(K))$ and 
$S_i$, $S_{K,i}$ are essential in $X$, $E(K)$, respectively, 
$\tilde{S}_i$ is incompressible in $E(M(K))$.  
If $\tilde{S}_i$ were boundary-compressible, 
then a component of $\tilde{S}_i$ would be a boundary-parallel annulus.
However, obviously each component of $\tilde{S}_i$ is not an annulus, and we have a contradiction. 
Hence $\tilde{S}_i$ is the desired essential surface.

\medskip

\noindent
\textbf{Case 2. $a_i \le 0$.}\quad 
We follow the same argument in Case 1. 
Take the surface $S_{K,i}$ as in Case 1. 

\begin{claim}
\label{F_i_case2}
There exists an essential orientable surface $F_i$ in $V - \mathrm{int}N(k_2)$
\(resp.\  $V - \mathrm{int}N(k_1)$\) 
such that each component of $F_i \cap \partial V$ has slope $4a_i$ and 
each component of $F_i \cap \partial N(k_2)$ 
\(resp.\ $F_i \cap \partial N(k_2)$\) has $4a_i$ with $a_i \le -\frac{1}{4}$ 
\(resp.\ $- \frac{1}{4} \le a_i \le 0$\). 
\end{claim}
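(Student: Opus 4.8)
The plan is to mirror the proof of Claim~\ref{F_i_case1} exactly in structure, but to select the minimal edge path $\gamma_2$ from Table~\ref{table:5/14paths} in place of $\gamma_1$. The essential observation is a comparison of boundary-slope pairs: whereas $\gamma_1$ carries surfaces with slopes $(3\frac{\beta}{\alpha}, 3\frac{\alpha}{\beta})$ on $(\partial N(k_1), \partial N(k_2))$---a pair that differs by a factor of $9$ once the slope on $\partial V$ is inverted, producing the companion slope $4a_i$ and the pattern slope $36a_i$---the path $\gamma_2$ carries surfaces with slopes $(-\frac{\beta}{\alpha}, -\frac{\alpha}{\beta})$, and these two slopes are reciprocals of one another. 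Since passing from the link framing $(\mu_1, \lambda_1)$ to the solid-torus framing $(\mu_V, \lambda_V) = (\lambda_1, \mu_1)$ inverts the slope on $\partial V$, this reciprocity is exactly what is needed to make the slope on $\partial V$ and the slope on $\partial N(k)$ agree, both equal to $-\frac{\alpha}{\beta}$.

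First I would dispose of the range $a_i \le -\frac{1}{4}$. Regarding $S^3 - \mathrm{int}N(k_1 \cup k_2)$ as $V - \mathrm{int}N(k_2)$ and measuring $\partial V$ with $(\mu_V, \lambda_V)$, the surface $F_{\gamma_2}$ meets $\partial V$ in slope $-\frac{\alpha}{\beta}$ and meets $\partial N(k_2)$ in slope $-\frac{\alpha}{\beta}$. As $\alpha \ge \beta > 0$ forces $-\frac{\alpha}{\beta} \le -1$, I can choose integral weights with $\frac{\alpha}{\beta} = -4a_i$ precisely when $4a_i \le -1$, i.e.\ $a_i \le -\frac{1}{4}$; such weights exist because $4a_i$ is a fixed rational. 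Calling the resulting surface $F_i$ gives the stated slope $4a_i$ on both $\partial V$ and $\partial N(k_2)$. For the range $-\frac{1}{4} \le a_i \le 0$ I would invoke the isotopy of $S^3$ interchanging $k_1$ and $k_2$ \(Figure~\ref{2_1_4_t><1}\), regard the exterior instead as $V - \mathrm{int}N(k_1)$, and run the same computation with the components swapped, so that the common slope becomes $-\frac{\beta}{\alpha} \in [-1, 0]$; matching $-\frac{\beta}{\alpha} = 4a_i$ is then solvable exactly for $-\frac{1}{4} \le a_i \le 0$.

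Two loose ends remain, both handled by the machinery of Section~\ref{essential_surfaces}. If the catalogued representative $F_{\gamma_2}$ is non-orientable, I would replace it by the boundary of a twisted $I$--bundle over it---an orientable surface carried with weights $2\alpha, 2\beta$ and the identical pair of boundary slopes---yielding the required orientable $F_i$. Incompressibility and $\partial$--incompressibility are then immediate from the Floyd--Hatcher classification \(Theorem~\ref{FloydHatcher}\).

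The hard part will be the framing bookkeeping rather than any surface-theoretic difficulty: one must verify that the reciprocal relation between the two boundary slopes of $\gamma_2$ is precisely compensated by the meridian--longitude swap $(\mu_V, \lambda_V) = (\lambda_1, \mu_1)$, and must check that the two endpoints of the slope range lie within the validity of Table~\ref{table:5/14paths}---namely the limit $a_i = -\frac{1}{4}$, where $\alpha = \beta$ and $t = 1$, and the limit $a_i = 0$, where $\beta = 0$ and the surface meets the pattern component in meridians. Once these are confirmed, realizing the prescribed rational slope by integral weights and inheriting essentiality from Theorem~\ref{FloydHatcher} are routine, completing the parallel with Case~1.
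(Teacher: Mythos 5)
Your proposal is correct and is essentially the paper's own proof: the paper likewise takes $F_{\gamma_2}$, exploits the reciprocal pair of boundary slopes $(-\frac{\beta}{\alpha}, -\frac{\alpha}{\beta})$ together with the framing swap $(\mu_V,\lambda_V)=(\lambda_1,\mu_1)$, sets $\frac{\alpha}{\beta}=-4a_i$ when $a_i \le -\frac{1}{4}$ and, after exchanging $k_1$ and $k_2$, sets $\frac{\beta}{\alpha}=-4a_i$ when $-\frac{1}{4}\le a_i\le 0$, with orientability handled by the doubled-weight $I$--bundle trick and essentiality coming from Theorem~\ref{FloydHatcher}. One small correction to your closing remark: at the endpoint $a_i=0$ (so $\beta=0$) the meridional boundary curves lie on the second link component $k_2$, i.e.\ on the companion torus $\partial V$, where meridians of $k_2$ are longitudes of $V$ and hence still have slope $0=4a_i$; the pattern component $k_1$ is met in slope $-\frac{\beta}{\alpha}=0$, not in meridians.
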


\begin{proof}
Let us take an essential surface $F_{\gamma_2}$ in $S^3 - \mathrm{int}N(k_1 \cup k_2)$ 
described in Section~\ref{essential_surfaces}.  
Then it has a pair of boundary slopes 
$(-\frac{\beta}{\alpha}, -\frac{\alpha}{\beta})$ on $\partial N(k_1),\  \partial N(k_2)$, 
i.e. $F_{\gamma_2}$ has boundary slopes $-\frac{\beta}{\alpha}$ on $\partial N(k_1)$ and 
$-\frac{\alpha}{\beta}$ on $\partial N(k_2)$.  

Assume that $a_i \le -\frac{1}{4}$, i.e.\ $-4a_i \ge 1$. 
Then regard $S^3 - \mathrm{int}N(k_1 \cup k_2)$ as $V - \mathrm{int}N(k_2)$. 
Using the preferred meridian-longitude $(\mu_V, \lambda_V)$ instead of $(\mu_1, \lambda_1)$, 
$F_{\gamma_2} \cap \partial V$ has slope $-\frac{\alpha}{\beta}$. 
Choose $\alpha, \beta$ so that 
$-\frac{\alpha}{\beta} = 4a_i \le -1$, 
i.e. $\frac{\alpha}{\beta} = -4a_i \ge 1$. 
Let us denote $F_{\gamma_2}$ associated with $a_i$ by $F_i$.  
Then each component of $F_i \cap \partial V$ has slope $4a_i$,  
and each component of $F_i \cap \partial N(k_2)$ has slope $4a_i$ as desired. 

When $- \frac{1}{4} \le a_i \le 0$, i.e. $0 \le -4a_i \le 1$, 
regard $S^3 - \mathrm{int}N(k_1 \cup k_2)$ as $V - \mathrm{int}N(k_1)$ by exchanging $k_1$ and $k_2$. 
Using the preferred meridian-longitude $(\mu_V, \lambda_V)$ instead of $(\mu_2, \lambda_2)$, 
$F_{\gamma_2} \cap \partial V$ has slope $-\frac{\beta}{\alpha}$. 
Choose $\alpha, \beta$ so that 
$-\frac{\beta}{\alpha} = 4a_i$
i.e. $\frac{\beta}{\alpha} = -4a_i \le 1$. 
Take $F_i$ as above so that 
each component of $F_i \cap \partial V$ has slope $4a_i$,  
and each component of $F_i \cap \partial N(k_1)$ has slope $4a_i$ as desired.  
\end{proof}

Let us take the image $f(F_i)$ in $X = f(V - \mathrm{int}N(k_2))$ if $a_i \le -\frac{1}{4}$ 
(resp.\ $X = f(V - \mathrm{int}N(k_1))$ if $-\frac{1}{4} \le a_i \le 0$), and denote it by $S_i$.  
As in the proof of Case 1, 
we obtain the desired essential surface 
$\tilde{S}_i = m S_i \cup n S_{K,i}$ or the $\partial I$--subbundle of 
a twisted $I$--bundle of $\tilde{S}_i$ (when $\tilde{S}_i $ is non-orientable). 
\end{proof}

\medskip

\section{Strong Slope Conjecture for Mazur doubles of knots}
\label{strong Slope_M}

Theorem~\ref{thm:SC_and_SSC_M}(2) follows from Theorem~\ref{strong_slope_conjecture_SS(i)} below. 

Recall that for a quadratic quasi-polynomial $\delta_K(n) = a(n)n^2 + b(n) n + c(n)$ with period $\le 2$, 
we put $a_i = a(i),\ b_i = b(i)$ and $c_i= c(i)$, where $i = 1, 2$. 

\begin{theorem}
\label{strong_slope_conjecture_SS(i)}
Let $K$ be a knot. 
We assume that the period of $\delta_K(n)$ is less than or equal to $2$. 
Assume that $b_i \le 0$, and that $b_i = 0$ implies $a_i \ne 0$.

 Further assume that $-\frac{1}{4} \le a_i$.  
If $K$ satisfies the Strong Slope Conjecture  with $SS(i)$,  
then its Mazur double also satisfies the Strong Slope Conjecture with $SS(i)$. 
\end{theorem}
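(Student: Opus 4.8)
The plan is to show that the essential surface $\tilde S_i \subset E(M(K))$ already built in the proof of Theorem~\ref{thm:SC_and_SSC_M}(1) is in fact a \emph{Jones} surface for $M(K)$: beyond realizing the Jones slope of $M(K)$ on $T_M$ (which was checked there), it must satisfy the linear-term condition $2\,(\text{lin.\ coeff.\ of }\delta_{M(K)}) = \chi(\tilde S_i)/(|\partial \tilde S_i|\,Q)$, where $P/Q$ is the Jones slope of $M(K)$ in lowest terms. Writing $p/q = 4a_i$ in lowest terms with $q>0$, Proposition~\ref{maxdeg_Mazur} supplies both quantities: when $a_i>0$ the slope is $36a_i = 9p/q$ and the linear coefficient is $-12a_i+3b_i-1$, while when $-\tfrac14\le a_i\le 0$ the slope is $4a_i=p/q$ and the linear coefficient is $b_i-1$. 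Since $K$ satisfies $SS(i)$, I fix a Jones surface $S_{K,i}\subset E(K)$ of slope $p/q$, so that $\chi(S_{K,i}) = 2b_i\,|\partial S_{K,i}|\,q$.

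Next I would read off the Euler characteristic and boundary data of the piece $S_i = f(F_i)$ from Table~\ref{table:5/14paths}, using the very weights $\alpha,\beta$ chosen in Claims~\ref{F_i_case1} and \ref{F_i_case2}. Let $\nu_K$ and $\nu_M$ denote the number of boundary curves of $S_i$ on $T_K$ and $T_M$. When $a_i>0$ the surface is carried by $\gamma_1$ with $\{\alpha,\beta\}=\{3p,q\}$, giving $\chi(S_i)=-2(\alpha+\beta)=-2(3p+q)$, $\nu_K=\gcd(3q,3p)=3$, and $\nu_M=\gcd(9p,q)=\gcd(9,q)$. When $-\tfrac14\le a_i\le 0$ the surface is carried by $\gamma_2$; here the hypothesis $-\tfrac14\le a_i$ forces $|4a_i|\le 1$, so the larger weight is $\alpha=q$ and $\chi(S_i)=-2\alpha=-2q$ with $\nu_K=\nu_M=1$. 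It is exactly here that the bound $-\tfrac14\le a_i$ is used: for $a_i<-\tfrac14$ the roles of $\alpha,\beta$ reverse and $\chi(S_i)$ becomes $-2|p|$ rather than $-2q$, which breaks the identity below; this is the content of Addendum~\ref{add:mazurSSCfailure}.

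To assemble $\tilde S_i$, I would glue $m=|\partial S_{K,i}|$ parallel copies of $S_i$ to $n=\nu_K$ parallel copies of $S_{K,i}$ along their common slope-$p/q$ curves on $T_K$, as in Section~\ref{sec:slope_conjecture_M}. This balances the $T_K$--boundaries since $m\,\nu_K = n\,|\partial S_{K,i}|$, and leaves $|\partial\tilde S_i|=m\,\nu_M$ curves on $T_M$. As gluing is along circles, $\chi$ is additive, $\chi(\tilde S_i)=m\,\chi(S_i)+n\,\chi(S_{K,i})$, and in each case $Q=q/\nu_M$, so $|\partial\tilde S_i|\,Q = m\,\nu_M\cdot(q/\nu_M)=|\partial S_{K,i}|\,q$. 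Substituting $\chi(S_{K,i})=2b_i|\partial S_{K,i}|q$ collapses the ratio to
\[
\frac{\chi(\tilde S_i)}{|\partial \tilde S_i|\,Q} \;=\; \frac{\chi(S_i)}{q} + 2b_i\,\nu_K .
\]
Plugging in the table values gives $-6p/q-2+6b_i=-24a_i+6b_i-2$ when $a_i>0$, and $-2+2b_i$ when $-\tfrac14\le a_i\le 0$; by Proposition~\ref{maxdeg_Mazur} these equal $2(-12a_i+3b_i-1)$ and $2(b_i-1)$ respectively, which is precisely twice the linear coefficient of $\delta_{M(K)}$. Hence $\tilde S_i$ is a Jones surface and $M(K)$ satisfies $SS(i)$.

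Two remaining points need care. First, $\tilde S_i$ may be non-orientable; as in the Slope Conjecture proof one passes to the $\partial I$--subbundle of a twisted $I$--bundle, which doubles both $\chi$ and the boundary count while fixing the slope, and so leaves the ratio $\chi/(|\partial|\,Q)$ unchanged. Second, incompressibility and boundary-incompressibility of $\tilde S_i$ are already established in the proof of Theorem~\ref{thm:SC_and_SSC_M}(1) and may be cited. The main obstacle is therefore arithmetical rather than topological: one must track the meridian-longitude framings, the $\gcd$ boundary counts, and the reduction of $P/Q$ with enough care that the Euler-characteristic bookkeeping reproduces exactly the linear coefficient predicted by Proposition~\ref{maxdeg_Mazur}, the inequality $-\tfrac14\le a_i$ being precisely what keeps this accounting consistent.
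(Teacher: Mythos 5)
Your proposal is correct and follows essentially the same route as the paper's proof: decompose $E(M(K))$ along the satellite torus, take the pattern-side piece to be $F_{\gamma_1}$ (when $a_i>0$) or $F_{\gamma_2}$ (when $-\frac{1}{4}\le a_i\le 0$) with weights realizing slope $4a_i$ on $T_K$, glue parallel copies to copies of the Jones surface $S_{K,i}$, and verify via Table~\ref{table:5/14paths} that the resulting ratio equals $2b_{M,i}$ from Proposition~\ref{maxdeg_Mazur}, handling non-orientability by passing to the $\partial I$--subbundle. Your lowest-terms parametrization $p/q=4a_i$ together with the symmetry of the $\gamma_1$ data even collapses the paper's two subcases $a_i\ge \frac{1}{12}$ and $0<a_i\le \frac{1}{12}$ into one, and your gcd bookkeeping (e.g.\ $Q=q/\nu_M$) agrees with the paper's computations in terms of $s_i/\gcd(36,s_i)$.
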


\begin{proof}
Write $\delta_{M(K)}(n)=a_M(n)n^2 + b_M(n)n + c_M(n)$.   
As in Proposition~\ref{maxdeg_Mazur}, defining $i \in \{0,1\}$ so that $i \equiv n \pmod 2$,
we may write 
$a_{M,i} = a_M(i), b_{M,i} = b_M(i),  c_{M,i} = c_M(i)$.
Write $a_i= r_i/s_i$ where $r_i$ and $s_i$ are coprime integers and $r_i,\ s_i>0$.   
Then, as a ratio of coprime integers, the denominator of $4a_i$ is $s_i/\gcd(4,s_i)$.
\medskip

Assume that $-\frac{1}{4} \le a_i$.  
Since $K$ satisfies the Strong Slope Conjecture with $SS(i)$, 
there is a properly embedded essential surface $S_{K,i}$ 
in the exterior of $K$ whose boundary slope is $4a_i$ and 
\[\frac{\chi(S_{K,i})}{|\bdry S_{K,i}| \cdot \frac{s_i}{\gcd(4,s_i)}} = 2 b_i. \]

We show that an essential surface $\tilde{S}_i$ in $E(M(K))$ given in the proof of Theorem~\ref{thm:SC_and_SSC_M}(1)  
satisfies the condition of the Strong Slope Conjecture: 
\[\tilde{S}_i\ \textrm{has boundary slope}\ p/q = 4a_{M,i} \quad 
\mbox{and} \quad \frac{\chi(\tilde{S}_i)}{|\partial \tilde{S}_i| q} = 2b_{M,i}.\]

\smallskip

We separate the argument into two cases: 
$a_i > 0$ and $-\frac{1}{4} \le a_i \le 0$. 

\medskip

\noindent
\textbf{Case 1.\ $a_i > 0$.}\quad
When addressing the Slope Conjecture for $M(K)$ in this case, 
we constructed a properly embedded essential surface $\tilde{S}_i = m S_{K,i} \cup n S_i$ 
in the exterior of $M(K)$ 
by joining $m$ copies of $S_{K,i}$ in $E(K)$ to $n$ copies of the surface $S_i$  in 
$f(V) - N(M(K))$.  
This requires that 
\[ m |\bdry S_{K,i}| = n |\bdry S_i \cap T_K|. \]

As in the proof of Theorem~\ref{thm:SC_and_SSC_M}(1) (Case 1), 
we consider two subcases depending upon $a_i \ge \frac{1}{12}$ or $0 < a_i \le \frac{1}{12}$.

\medskip

\noindent
\textbf{(i)} $a_i \ge \frac{1}{12}$.\quad  
In this case we regard $S^3 - \mathrm{int}N(k_1 \cup k_2)$ as $V - \mathrm{int}N(k_2)$. 
Recall that the surface $S_i$ is identified with a surface of type $F_{\gamma_1}$ in the exterior of the $[2, 1, 4]$ two-bridge link by the embedding $f \colon V \to S^3$. 
$F_{\gamma_1} \cap \partial V$ has slope $\frac{\alpha}{3\beta}$ with respect to the preferred meridian-longitude $(\mu_V, \lambda_V)$. 
Choose $\alpha, \beta$ ($\alpha \ge \beta$) as 
$\frac{\alpha}{\beta} = 12a_i = \frac{12r_i}{s_i} \ge 1$ 
so that $S_i$ has boundary slope $4a_i$ on $\partial V = T_K$.  
Note that $S_i$ has boundary slope $3\frac{\alpha}{\beta} = 36a_i$ on $\partial N(M(K)) = T_M$. 
For convenience, we choose $\beta = 2s_i$, $\alpha = 24r_i$ 
so that $F_{\gamma_1} = F_{\gamma_1, \alpha, \beta}$ is orientable; see Subsection~\ref{algorithm}. 
 
Then, using Table~\ref{table:5/14paths},
we calculate the following:
\begin{itemize}
\item $\chi(S_i) 
= -2\alpha -2\beta = -4(12r_i+s_i)$,

\item  slope of $\bdry S_i$  on $T_M$ is 
$3\frac{\alpha}{\beta} 
= 3 \frac {12r_i}{s_i} 
= \frac {36r_i}{s_i}$,  

\item $|\bdry S_i \cap T_K| 
= \gcd(3\beta, \alpha) 
= \gcd(6s_i,24 r_i) 
= 6 \gcd(4,s_i)$, and

\item $|\bdry S_i \cap T_M| 
= \gcd(3\alpha, \beta) 
= \gcd(72 r_i, 2s_i) 
= 2\gcd(36,s_i)$.
\end{itemize}

The boundary of $\tilde{S}_i$ consists of $n$ copies of the boundary of $S_i$  on $T_M$, 
so we have

\begin{itemize}
\item 
$|\bdry \tilde{S}_i| = n | \partial S_i \cap T_M | = 2n \gcd(36,s_i)$. 
\end{itemize}

Moreover, the boundary slope of $\tilde{S}_i$ is the  slope of $\bdry S_i$  on $T_M$, 
and so this has denominator $ \frac{s_i}{\gcd(36,s_i)}$.  
We may now calculate
\begin{align*}
  \frac{\chi(\tilde{S}_i)}{|\bdry \tilde{S}_i| \cdot \frac{s_i}{\gcd(36,s)}} 
   &= \frac{m \chi(S_{K,i}) + n \chi(S_i)}{ 2n \gcd(36,s_i) \cdot \frac{s_i}{\gcd(36,s_i)}} \\
  &= \frac{ 2 b_i m |\bdry S_{K,i}| \cdot \frac{s_i}{\gcd(4,s_i)} -4n(12 r_i+s_i)}{2n s_i} \\
  &= \frac{ 12 b_i n \gcd(4,s_i) \cdot \frac{s_i}{\gcd(4,s_i)} -4n(12 r_i+s_i)}{2n s_i} \\
  &= 6 b_i -24r_i/s_i-2 = 2(-12a_i + 3b_i-1) = 2b_{M,i} 
\end{align*}
    as desired.

If the glued surface $\tilde{S}_i = m S_{K,i} \cup n S_i$ is non-orientable, 
then as in the proof of Theorem~\ref{thm:SC_and_SSC_M}(1), 
we replace $\tilde{S}_i$ by the frontier $\tilde{S}_i'$ of the tubular neighborhood of $\tilde{S}_i$, 
but \cite[Lemma 5.1]{BMT_tgW} shows that $\tilde{S}_i'$ and $\tilde{S}_i$ has the same boundary slope and 
$\displaystyle \frac{\chi(\tilde{S}_i')}{|\bdry\tilde{S}_i'| \cdot \frac{s_i}{\gcd(36,s_i)}} 
= \frac{\chi(\tilde{S}_i)}{|\bdry \tilde{S}_i| \cdot \frac{s}{\gcd(36,s_i)}}$. 
Thus the essential surface $\tilde{S}_i$ or $\tilde{S}_i'$ (when $\tilde{S}_i$ is non-orientable) is the desired 
essential surface.

\medskip

\noindent
\textbf{(ii)} $0 < a_i \le \frac{1}{12}$.\quad  
In this case we regard $S^3 - \mathrm{int}N(k_1 \cup k_2)$ as $V - \mathrm{int}N(k_1)$. 
$F_{\gamma_1} \cap \partial V$ has slope $\frac{\beta}{3\alpha}$ with respect to 
the preferred meridian-longitude $(\mu_V, \lambda_V)$. 
Choose $\alpha, \beta$ ($\alpha \ge \beta$) as 
$\frac{\beta}{\alpha} = 12a_i = \frac{12r_i}{s_i} \le 1$ 
so that $S_i$ has boundary slope $4a_i$ on $\bdry V$.  
Note that $S_i$ has boundary slope $3\frac{\beta}{\alpha} = 36a_i$ on $\partial N(M(K))$. 
For convenience, we choose $\alpha = 2s_i$, $\beta = 24r_i$ 
so that $F_{\gamma_1} = F_{\gamma_1, \alpha, \beta}$ is orientable; see Subsection~\ref{algorithm}. 
 
Then, using Table~\ref{table:5/14paths},
we calculate the following:
\begin{itemize}
\item $\chi(S_i) 
= -2\alpha -2\beta = -4(12r_i+s_i)$,

\item  slope of $\bdry S_i$  on $T_M$ is 
$3\frac{\beta}{\alpha} 
= 3 \frac {12r_i}{s_i} 
= \frac {36r_i}{s_i}$,  

\item $|\bdry S_i \cap T_K| 
= \gcd(3\alpha, \beta) 
= \gcd(6s_i,24 r_i) 
= 6 \gcd(4,s_i)$, and

\item $|\bdry S_i \cap T_M| 
= \gcd(3\beta, \alpha) 
= \gcd(72 r_i, 2s_i) 
= 2\gcd(36,s_i)$.
\end{itemize}

The boundary of $\tilde{S}_i$ consists of $n$ copies of the boundary of $S_i$  on $T_M$, 
so we have

\begin{itemize}
\item 
$|\bdry \tilde{S}_i| = n | \partial S_i \cap T_M | = 2n \gcd(36,s_i)$. 
\end{itemize}

Then the identical argument as (i) shows that the essential surface $\tilde{S}_i$ or $\tilde{S}_i'$ (when $\tilde{S}_i$ is non-orientable) is the desired 
essential surface.

\medskip

\noindent
\textbf{Case 2.\ $-\frac{1}{4} \le a_i \le 0$.}\quad 
Write $a_i= r_i/s_i$ where $r_i$ and $s_i$ are coprime integers and $s_i>0$.   
Then, as a ratio of coprime integers, the denominator of $4a_i$ is $s_i/\gcd(4,s_i)$. 
We take the surface $S_{K,i}$ in the proof of Case 1 
whose boundary slope is $4a_i$ and 
\[\frac{\chi(S_{K,i})}{|\bdry S_{K,i}| \cdot \frac{s_i}{\gcd(4,s_i)}} = 2 b_i. \]
When addressing the Slope Conjecture for $M(K)$ in this case, 
we constructed a properly embedded essential surface $\tilde{S}_i = m S_{K,i} \cup n S_i$ 
in the exterior of $M(K)$ 
by joining $m$ copies of $S_{K,i}$ in $E(K)$ to $n$ copies of the surface $S_i$  in 
$f(V) - N(M(K))$.  
This requires that 
\[ m |\bdry S_{K,i}| = n |\bdry S_i \cap T_K|. \]

Let us regard $S^3 - \mathrm{int}N(k_1 \cup k_2)$ as $V - \mathrm{int}N(k_1)$.

Recall that the surface $S_i$ is identified with a surface of type $F_{\gamma_2}$ in the exterior of the $[2, 1, 4]$ two-bridge link by the embedding $f \colon V \to S^3$.

$F_{\gamma_2} \cap \partial V$ has slope $-\frac{\beta}{\alpha}$ with respect to the preferred meridian-longitude $(\mu_V, \lambda_V)$. 
Choose $\alpha, \beta$ ($\alpha \ge \beta$) as 
$\frac{\beta}{\alpha} = -4a_i = -\frac{4r_i}{s_i} \le 1$ 
so that $S_i$ has boundary slope $4a_i$ on $\partial V = T_K$.  
Note that $S_i$ has boundary slope $-\frac{\beta}{\alpha} = 4a_i$ on $\partial N(M(K)) = \partial T_M$. 
For convenience, we choose $\alpha = 2s_i$, $\beta = -8r_i$ 
so that $F_{\gamma_2} = F_{\gamma_2, \alpha, \beta}$ is orientable; see Subsection~\ref{algorithm}.

Then, 
Table~\ref{table:5/14paths} gives the following:
\begin{itemize}
\item $\chi(S_i) 
=  -2\alpha = -4s_i$,

\item  slope of $\partial S_i$  on $T_M$ is 
$-\frac{\beta}{\alpha}  
= \frac {4r_i}{s_i}$,  

\item $|\partial S_i \cap T_K| 
= \gcd(\alpha, \beta) 
= \gcd(2s_i,-8 r_i) 
= 2 \gcd(s_i, 4)$, and

\item $|\partial S_i \cap T_M| 
= \gcd(\beta, \alpha) 
= 2\gcd(4,s_i)$.
\end{itemize}

The boundary of $\tilde{S}_i$ consists of $n$ copies of the boundary of $S_i$  on $T_M$, 
so we have

\begin{itemize}
\item 
$|\bdry \tilde{S}_i| = n | \partial S_i \cap T_M | = 2n \gcd(4,s_i)$. 
\end{itemize}

Moreover, the boundary slope of $\tilde{S}_i$ is the  slope of $\bdry S_i$  on $T_M$, 
and so this has denominator $\frac{s_i}{\gcd(4r_i, s_i)} = \frac{s_i}{\gcd(4, s_i)}$.  
We may now calculate
\begin{align*}
  \frac{\chi(\tilde{S}_i)}{|\bdry \tilde{S}_i| \cdot \frac{s_i}{\gcd(4,s)}} 
   &= \frac{m \chi(S_{K,i}) + n \chi(S_i)}{ 2n \gcd(4,s_i) \cdot \frac{s_i}{\gcd(4,s_i)}} \\
  &= \frac{ 2 b_i m |\bdry S_{K,i}| \cdot \frac{s_i}{\gcd(4,s_i)} -4n s_i}{2ns_i} \\
  &= \frac{ 4 b_i n \gcd(4,s_i) \cdot \frac{s_i}{\gcd(4,s_i)} }{2n s_i} -2\\
  &= 2 b_i -2 = 2(b_i-1) = 2b_{M,i} 
\end{align*}
    as desired.

If the glued surface $\tilde{S}_i = m S_{K,i} \cup n S_i$ is non-orientable, 
then as in the proof of Theorem~\ref{thm:SC_and_SSC_M}(1), 
we replace $\tilde{S}_i$ by the frontier $\tilde{S}_i'$ 
of the tubular neighborhood of $\tilde{S}_i$, as in the proof of Case 1, 
 the essential surface $\tilde{S}_i$ or $\tilde{S}_i'$ (when $\tilde{S}_i$ is non-orientable) is the desired 
essential surface.
\end{proof}

As shown in Theorem~\ref{thm:SC_and_SSC_M}(1),  
even when $a_i < -\frac{1}{4}$, 
$M(K)$ satisfies the Slope Conjecture if $K$ does.
However, concerning the Strong Slope Conjecture, 
we have an unexpected situation in such a case.

\begin{addendum}
\label{add:mazurSSCfailure}
Let $K$ be a knot. 
We assume that the period of $\delta_K(n)$ is less than or equal to $2$.  
Assume that $b_i \le 0$, and if $b_i = 0$, then $a_i \ne 0$.  Further assume that $a_i < -\frac{1}{4}$.

If $K$ satisfies the Strong Slope Conjecture with $SS(i)$,  
then either its Mazur double fails to satisfy the Strong Slope Conjecture with $SS(i)$
or any Jones surface for $M(K)$ fails to restrict to the exterior of $K$ as a Jones surface for $K$.
\end{addendum}

\begin{proof}

Assume that $a_i < - \frac{1}{4}$. 
Write $a_i = \frac{r_i}{s_i}$ as before. 
By Proposition~\ref{maxdeg_Mazur}, for each $i\in \{0,1\}$, $4a_i$ is the Jones slope of both $K$ and $M(K)$.  (More specifically, the set of Jones slopes for both $K$ and $M(K)$ is $\{4a_0, 4a_1\}$.)

If $\tilde{S}_i$ is a properly embedded essential surface in the exterior of $M(K)$, it may be isotoped to intersect the satellite torus $\bdry V$ minimally.  This torus $\bdry V$ divides the exterior fo $M(K)$ into the exterior of $K$ and the exterior of the Mazur pattern in the solid torus $V$.  Hence it divides the surface $\tilde{S}_i$ into essential surfaces $\tilde{S}_{K,i}$ and $\tilde{S}_{V,i}$ in the exterior of $K$ and the exterior of the Mazur pattern, respectively.

So assume that $\tilde{S}_i$ is a Jones surface for $M(K)$ with boundary slope $4a_i$.
We will proceed as in the proof of Theorem~\ref{strong_slope_conjecture_SS(i)} to show that if $\tilde{S}_{K,i}$, the restriction of $\tilde{S}_i$ to the exterior of $K$, is a union of Jones surfaces for $K$, then we must have $a_i=-\frac14$, contrary to assumption.

\smallskip
Suppose that $\tilde{S}_{K,i}$ is the union of Jones surfaces for $K$.  Then a single component $S_{K,i}$ has boundary slope $4a_i = \frac{4r_i}{s_i}$ on $\bdry V$ and 
\[\frac{\chi(S_{K,i})}{|\bdry S_{K,i}| \cdot \frac{s_i}{\gcd(4,s_i)}} = 2 b_i. \]
Since this equality holds for all components of $\tilde{S}_{K,i}$ and any number of parallel copies of these components, we may assume that $\tilde{S}_{K,i} = m S_{K,i}$, the union of some number $m$ parallel copies of $S_{K,i}$ for some number $m$.  (If needed, we may first replace $\tilde{S}_i$, and hence $\tilde{S}_{K_i}$, with a collection of parallel copies.)

Recall from the construction in Case 2 of the proof of Theorem~\ref{strong_slope_conjecture_SS(i)} that 
we need to regard $S^3 - \mathrm{int}N(k_1 \cup k_2)$ as $V - \mathrm{int}N(k_2)$ where $k_1 \cup k_2$ is the two bridge link $[2,1,4]$.  Then $\tilde{S}_{V,i} \cap \partial V$ has slope $-\frac{\alpha}{\beta}$ with respect to the preferred meridian-longitude $(\mu_V, \lambda_V)$ so that 
$\frac{\alpha}{\beta} = -4a_i = -\frac{4r_i}{s_i} > 1$. 
Therefore $\tilde{S}_{V,i}$ has boundary slope $4a_i$ on $\partial V = T_K$ and  
boundary slope $-\frac{\alpha}{\beta} = 4a_i$ on $\partial N(M(K)) = \partial T_M$. Converting back to the slopes for $k_1 \cup k_2$, we have the pair of slopes $\{4a_i, \frac1{4a_i}\}$.
Examining Table~\ref{table:5/14paths} for the cases where the pair of boundary slopes are reciprocals for $\alpha,\beta>0$ such that one of them is a real number less than $-1$ (in order to be the Jones slope $4a_i$), 
one sees that $F_{\gamma_2}$ is the only candidate surface for a component (or two) $S_{V,i}$ of $\tilde{S}_{V,i}$.  
For convenience, we set $\beta = 2s_i$ and $\alpha = -8r_i$ 
so that $F_{\gamma_2} = F_{\gamma_2, \alpha, \beta} = S_{V,i}$ is orientable; see Subsection~\ref{algorithm}. Thus $\tilde{S}_{V,i}$ is a union of $n$ parallel copies of $S_{V_i} = F_{\gamma_2, \alpha, \beta}$ for some number $n$.

Table~\ref{table:5/14paths} gives the following:
\begin{itemize}
\item $\chi(S_{V,i}) 
=  -2\alpha = 16r_i$,

\item  slope of $\partial S_{V,i}$  on $T_M$ is 
$-\frac{\alpha}{\beta}  
= \frac {4r_i}{s_i}$,  

\item $|\partial S_{V,i} \cap T_K| 
= \gcd(\beta, \alpha) 
= \gcd(2s_i, -8 r_i) 
= 2 \gcd(s_i, 4)$, and

\item $|\partial S_{V,i} \cap T_M| 
= \gcd(\alpha, \beta) 
= 2\gcd(4,s_i)$.
\end{itemize}

The boundary of $\tilde{S}_i$ consists of $n$ copies of the boundary of $S_{V,i}$  on $T_M$, 
so we have

\begin{itemize}
\item 
$|\bdry \tilde{S}_i| = n | \partial S_{V,i} \cap T_M | = 2n \gcd(4, s_i)$. 
\end{itemize}

Moreover, the boundary slope of $\tilde{S}_i$ is the slope $4a_i = \frac{4r_i}{s_i}$ of $\bdry S_{V,i}$ on $T_M$, 
and so this has denominator $\frac{s_i}{\gcd(4r_i, s_i)} = \frac{s_i}{\gcd(4, s_i)}$.  
Since $\tilde{S}_i = m S_{K,i} \cup n S_{V,i}$, we may now calculate
\begin{align*}
  \frac{\chi(\tilde{S}_i)}{|\bdry \tilde{S}_i| \cdot \frac{s_i}{\gcd(4,s)}} 
  &= \frac{m \chi(S_{K,i}) + n \chi(S_{V,i})}{ 2n \gcd(4,s_i) \cdot \frac{s_i}{\gcd(4,s_i)}} \\
  &= \frac{ 2 b_i m |\bdry S_{K,i}| \cdot \frac{s_i}{\gcd(4,s_i)} +16n r_i}{2ns_i} \\
  &= \frac{ 4 b_i n \gcd(4,s_i) \cdot \frac{s_i}{\gcd(4,s_i)} }{2n s_i} +8\frac{r_i}{s_i}\\
  &= 2 b_i + 8a_i
\end{align*}

Given that $\tilde{S}_i$ is a Jones surface, we have that
\[
\frac{\chi(\tilde{S}_i)}{|\bdry \tilde{S}_i| \cdot \frac{s_i}{\gcd(4,s)}} 
= 2b_{M,i}.
\]
But then this implies $2 b_i + 8a_i = 2b_{M,i} = 2b_i - 2$, 
and hence $a_i = - \frac{1}{4}$.  
This contradicts the assumption. 
\end{proof}

\section{The minimum degree of a Mazur double.}

Using the work established in Section~\ref{Jones_generalized_M}, 
we determine the minimum degree of a Mazur double of a knot that satisfies a few conditions.

\begin{proposition}[\textbf{min-degree of $J_{M(K), n}(q)$ with $d_-[J_{K, n}(q)] = \delta^*(n)$ for all $n \ge 1$}]
\label{mindeg_Mazur}
Let $K$ be a knot in $S^3$ and 
$d_-[J_{K,n}(q)]$ is a quadratic quasi-polynomial 
$\delta^*_K(n)=a^*(n) n^2 +b^*(n) n +c^*(n)$ 
for all $n \ge 2$. 
Define $i \in \{ 0, 1 \}$ by $i \equiv n\ (\mathrm{mod}\ 2)$ and put $a^*_i=a^*(i), 
b^*_i = b^*(i)$, and $c^*_i = c^*(i)$. 
We assume that the period of $\delta^*(n)$ is less than or equal to $2$ and 
that $b^*_i \ge 0$, $a^*_1+b^*_1 + c^*_1 \ge 0$. 
 Assume further that if $a^*_i = -\frac{1}{12}$, 
then $b^*_i \ne 0$. 
Then, for suitably large $n$, the minimum degree of the colored Jones polynomial 
of its Mazur  double is given by 
\begin{equation}
  \delta^*_{M(K)}(n)=
   \left\{ \begin{array}{ll} 
    -\frac{5}{4}n^2+\frac{1}{2}n -\frac{1}{2} + C_{i+1} &  (a^*_i > -\frac{1}{12}) \\
    -\frac{5}{4}n^2+\frac{1}{2}n -\frac{1}{2} + C_{i+1} &  (a^*_i = -\frac{1}{12})\\
    (9a^*_i -\frac{1}{2})n^2 +(- 12a^*_i+3b^*_i -\frac{1}{2}) n+4a^*_i-2b^*_i+c^*_i +\frac{1}{2} & (a^*_i< -\frac{1}{12}), 
\end{array} \right. 
\end{equation}
where $C_0 = \frac{3}{4}$ and $C_1 = 4a^*_0 + 2b^*_0 + c^*_0 + \frac{1}{2}$.

Furthermore, suppose we only know that $d_-[J_{K,n}(q)]$ is a quadratic quasi-polynomial 
$\delta^*_K(n)=a^*_i n^2 +b^*_i n +c^*_i$ of period at most $2$ for sufficiently large $n$.   Assume $a_i^* \leq -\frac{1}{12}$.  Then, for suitably large $n$, the minimum degree of the colored Jones polynomial of the colored Jones polynomial of the Mazur double of $K$ is given by 
\[  \delta^*_{M(K)}(n)=    (9a^*_i -\frac{1}{2})n^2 +(- 12a^*_i+3b^*_i -\frac{1}{2}) n+4a^*_i-2b^*_i+c^*_i +\frac{1}{2}.\]
\end{proposition}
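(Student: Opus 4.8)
The plan is to run the minimum-degree analogue of the proof of Proposition~\ref{maxdeg_Mazur_normalized}, working with the normalized coefficients of Subsection~\ref{transition} and translating back at the end. Starting from the double sum of Proposition~\ref{CJP_M(K)},
\[ \langle n \rangle\, J'_{M(K),n}(q) = \sum_{\substack{j,k=0\\ j\le 2k}}^{n}\ \sum_{\substack{|2k-n|\le l\le 2k+n\\ n+l\ \mathrm{even}}} g(j,k,l;q), \]
I would read off $d_-[g(j,k,l;q)]$ from Lemma~\ref{lem:partialdegreecomputation} by taking the lower sign and using that the minimum degree of each of the two $6j$-symbols is the negative of its maximum degree, and substitute $d_-[J'_{K,l}(q)] = \delta'^{*}_{K}(l)$ wherever the quasi-polynomial is valid. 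The objective is to locate a \emph{unique} minimizing triple $(j_0,k_0,l_0)$: uniqueness forces the terms of lowest degree in the double sum not to cancel, so that $d_-[\langle n\rangle J'_{M(K),n}(q)] = d_-[g(j_0,k_0,l_0;q)]$, whence $\delta'^{*}_{M(K)}(n) = d_-[g(j_0,k_0,l_0;q)] + \tfrac12 n$ after accounting for $d_-[1/\langle n\rangle] = \tfrac12 n$ (Lemma~\ref{<n>}). Passing to $\delta^{*}_{M(K)}$ is then the transition formula $\delta^{*}_{M(K)}(n) = \delta'^{*}_{M(K)}(n-1) - \tfrac12 n + \tfrac12$.

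As in the maximum-degree proof, I would partition the index set into the four cases determined by the signs of $j-k$ and of $j - (\tfrac{n-l}2 + k)$, minimize $d_-[g]$ on each, and reduce the interior cases to their shared boundaries. The new phenomenon, absent from the maximum-degree argument, is that the optimal $l$ need not be the extreme value $3n$. Reducing first over $j$ and $k$ leaves, for each fixed $l$, an expression whose extremal values over the admissible range are governed by two competing configurations: the configuration with $l = 3n$, whose leading term is $(9a^{*}_i - \tfrac12)n^2$, and the configurations with $l$ small, whose leading term is $-\tfrac54 n^2$. These leading coefficients agree precisely when $9a^{*}_i - \tfrac12 = -\tfrac54$, i.e.\ when $a^{*}_i = -\tfrac1{12}$, which is the origin of the trichotomy. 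For $a^{*}_i < -\tfrac1{12}$ the minimum is realized at $(j,k,l) = (0,n,3n)$, giving the $9a^{*}_i$-formula; for $a^{*}_i > -\tfrac1{12}$ it is realized at a small value of $l$, giving the $-\tfrac54 n^2$-formula.

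The step I expect to be the main obstacle is precisely the small-$l$ end, where the minimum-degree computation parts ways with the maximum-degree one. Because the minimizing $l$ can be $0$ or $1$, one cannot rely on the quasi-polynomial there and must instead feed in the genuine low-degree behavior of $J'_{K,l}(q)$: for $l=0$ one uses $J'_{K,0}(q)=1$, and the hypothesis $a^{*}_1 + b^{*}_1 + c^{*}_1 = \gamma^{*}_0 \ge 0$ guarantees that the true minimum degree $0$ is not undercut by the predicted value $\delta'^{*}_{K}(0)$; for $l=1$ one uses $d_-[J'_{K,1}(q)] = \delta^{*}_K(2) + \tfrac12$, which produces the constants $C_0 = \tfrac34$ and $C_1 = 4a^{*}_0 + 2b^{*}_0 + c^{*}_0 + \tfrac12$. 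This is also the reason the hypothesis $d_-[J_{K,n}(q)] = \delta^{*}_K(n)$ for \emph{all} $n\ge 2$ is imposed: every intermediate $l$ down to the smallest contributes and must be controlled. Finally, the sign hypotheses $b^{*}_i \ge 0$, together with $b^{*}_i \ne 0$ when $a^{*}_i = -\tfrac1{12}$, are exactly what secures uniqueness of the minimizer and rules out cancellation: at $a^{*}_i = -\tfrac1{12}$ the two candidate configurations share the leading coefficient $-\tfrac54$, and comparing their linear coefficients, $\tfrac12$ for small $l$ against $\tfrac12 + 3b^{*}_i$ for $l=3n$, shows that the small-$l$ configuration strictly wins as soon as $b^{*}_i \ne 0$.

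For the concluding ``Furthermore'' statement I would rerun the same minimization under the weakened hypothesis that $d_-[J_{K,n}(q)]$ is a quadratic quasi-polynomial only for large $n$. When $a^{*}_i \le -\tfrac1{12}$ the leading coefficient $9a^{*}_i - \tfrac12$ of the $l=3n$ term is at most $-\tfrac54$, the leading coefficient attached to every bounded-$l$ configuration; hence for suitably large $n$ the minimum is localized at $(j,k,l) = (0,n,3n)$, where $l = 3n$ is large and only the large-$n$ value $\delta^{*}_K(3n)$ enters. Consequently the small-$l$ data, which is no longer available, is never needed, and translating $d_-[g(0,n,3n;q)] + \tfrac12 n$ through the transition formula yields the asserted expression $(9a^{*}_i - \tfrac12)n^2 + (-12a^{*}_i + 3b^{*}_i - \tfrac12)n + (4a^{*}_i - 2b^{*}_i + c^{*}_i + \tfrac12)$.
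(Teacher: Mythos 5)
Your high-level plan (work with the normalized double sum of Proposition~\ref{CJP_M(K)}, locate a unique minimizing triple, split at $a^*_i=-\frac{1}{12}$, feed in the genuine values of $J'_{K,l}(q)$ at $l=0,1$, and translate back via the transition formula) matches the paper's, but your central computational claim is wrong: the minimum of $d_-[g(j,k,l;q)]$ is \emph{not} attained at $(j,k,l)=(0,n,3n)$. Unlike the maximum-degree argument, the $j$-dependence of $d_-[g]$ is strictly \emph{decreasing} in $j$; for instance, on the region $j\le k$, $j\le \frac{n-l}{2}+k$ one computes
\[
d_-[g(j,k,l;q)]=\tfrac 1 2\bigl(-4j-2j^2+k+k^2-l-n^2-n\bigr)+d_-[J'_{K, l}(q)],
\]
so the minimum pushes $j$ to its \emph{largest} admissible value $\min\{n,2k\}$, i.e.\ into the region $j\ge k$, $j\ge \frac{n-l}{2}+k$ (this is Claim~\ref{claim:incase4} in the paper), and the minimizing $k$ for fixed $l$ is then a nearest integer to $\frac{1}{6}(1+l+3n)$. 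For $\alpha^*_i<-\frac{1}{12}$ the true minimizer is $(j,k,l)=(n,n,3n)$, which gives $(9\alpha^*_i-\frac{1}{2})n^2+(3\beta^*_i-3)n+\gamma^*_i$, whereas your point $(0,n,3n)$ gives $9\alpha^*_i n^2+(3\beta^*_i-\frac{3}{2})n+\gamma^*_i$, larger by $\frac{1}{2}n^2+\frac{3}{2}n$. Your proposal is in fact internally inconsistent here: you correctly state that the $l=3n$ configuration has leading term $(9a^*_i-\frac{1}{2})n^2$, but that extra $-\frac{1}{2}n^2$ arises precisely from taking $j=k=n$ rather than $j=0$; run with your minimizer, the argument would output leading coefficient $9a^*_i$ and the stated formula would fail.

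This is not a local slip, because putting the minimum at large $j$ is what creates the two delicate parts of the actual proof, both absent from your plan. First, the constraint $j\le 2k$ becomes active, forcing a separate minimization over the region $2k<n$ (there $j=2k$, and for odd $n$ the optimal $k$ is $\frac{n-1}{2}$); this produces a second comparison quadratic $h_B(l)=(\alpha^*_1+\frac{1}{8})l^2+(\beta^*_1-\frac{1}{2})l+\gamma^*_1$ with its own subcases, including a degenerate one ($8\alpha^*_1+1=0$, $\beta^*_1=\frac{1}{2}$) where the bottom-degree terms can cancel and only a lower bound is available---which still suffices because that region is dominated by the $k\ge\frac{n}{2}$ region. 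Second, uniqueness of the minimizer cannot simply be asserted: when $l+1\equiv 0 \pmod 3$ there are \emph{two} minimizing values of $k$ for that $l$, and one must check that these tied configurations sit strictly (by $+\frac{1}{3}$) above the global minimum so that no cancellation can occur at the lowest degree. The rest of your outline---the role of $J'_{K,0}(q)=1$ and $\gamma^*_0\ge 0$ at $l=0$, the origin of the constants $C_0,C_1$ from $l=0,1$, why $b^*_i\ne 0$ is needed when $a^*_i=-\frac{1}{12}$, and the localization at $l=3n$ that lets the ``Furthermore'' clause get by with only an eventual quasi-polynomial---is faithful to the paper, but as proposed the proof computes the wrong minimum and hence does not establish the proposition.
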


\begin{remark}
The assumption that the quadratic quasi-polynomial $\delta_K^*$ has period at most $2$ is mostly for convenience as it simplifies the structure of the argument.  Similar results should hold when the period is larger.

However, the assumption that the minimum degree $d_-[J_{K,n}(q)]$ is the quadratic quasi-polynomial $\delta_K^*$ for {\em all} $n \geq 2$ rather than just for suitably large $n$ is needed to say something definitive when $a_i^* \geq -\frac{1}{12}$.
\end{remark}

\begin{proof}
\bigskip
This
 is obtained from Proposition~\ref{mindeg_Mazur_normalized} by using the formula
\[
\delta^*_{M(K)}(n) = \delta'^*_{M(K)}(n-1) - \frac{1}{2}n + \frac{1}{2}
\]
given in Subsection~\ref{transition}.
\end{proof}

\subsection{Computations of the minimum degree}
\label{computation_mindegrees}

\begin{proposition}[\textbf{min-degree of $J'_{M(K), n}(q)$}]
\label{mindeg_Mazur_normalized}
Let $K$ be a knot in $S^3$ and 
$d_-[J'_{K,n}(q)]$ is a quadratic quasi-polynomial 
$\delta'^*_K(n)=\alpha^*(n) n^2 +\beta^*(n) n+\gamma^*(n)$ for all $n \ge 1$.  
We define $i \in \{0,1 \}$ by $i \equiv n \pmod 2$ and 
 put $\alpha^*_i:=\alpha^* (i)$, $\beta^*_i:=\beta^* (i)$, and $\gamma^*_i:=\gamma^* (i)$. 
We assume that the period of $\delta'^*_K(n)$ is less than or equal to $2$ and that 
$-2\alpha^*_i+\beta^*_i+\frac{1}{2} \ge 0$ and $\gamma^*_0 \ge 0$. 
Assume further that if $\alpha^*_i = -\frac{1}{12}$, 
then $\beta^*_i \ne -\frac{1}{3}$. 
Then, for suitably large $n$,  the minimum degree of the normalized colored Jones polynomial 
of the Mazur double of $K$
is given by 
\begin{equation*}
  \delta'^*_{M(K)}(n)=
   \left\{ \begin{array}{ll} 
    -\frac{5}{4} n^2 - \frac{3}{2} n + C'_i &  (\alpha^*_i > -\frac{1}{12})\\
    -\frac{5}{4} n^2 - \frac{3}{2} n + C'_i &  (\alpha^*_i = -\frac{1}{12}) \\
     (9\alpha^*_i - \frac{1}{2})n^2 +(3\beta^*_i-\frac{5}{2}) n+ \gamma^*_i & (\alpha_i < -\frac{1}{12}), \\
      \end{array} \right.
\end{equation*}
where $C'_0 = 0$ and $C'_1 = \alpha^*_1 + \beta^*_1 + \gamma^*_1 - \frac{1}{4}$.

Furthermore, suppose we only know that $d_-[J'_{K,n}(q)]$ is a quadratic quasi-polynomial $\delta'^*_K(n)= \alpha^*_i n^2 + \beta^*_i n + \gamma^*_i$ of period at most $2$ for sufficiently large $n$. Assume $\alpha^*_i < -\frac{1}{12}$. Then, for suitably large $n$, the minimum degree of the normalized colored Jones polynomial of the Mazur double of $K$ is given by
\[\delta'^*_{M(K)}(n)=(9\alpha^*_i - \frac{1}{2})n^2 +(3\beta^*_i-\frac{5}{2}) n+ \gamma^*_i. \]

\end{proposition}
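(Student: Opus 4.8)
The plan is to run the proof of Proposition~\ref{maxdeg_Mazur_normalized} again, replacing every occurrence of $d_+$ by $d_-$. Starting from the double-sum expression for $\langle n\rangle J'_{M(K),n}(q)$ in Proposition~\ref{CJP_M(K)}, I would apply Lemma~\ref{lem:partialdegreecomputation} with the lower sign, substituting $d_-[J'_{K,l}(q)] = \delta'^*_K(l) = \alpha^*_i l^2 + \beta^*_i l + \gamma^*_i$; here the hypothesis that $\delta'^*_K$ governs $d_-[J'_{K,l}(q)]$ for \emph{all} $l \ge 1$ is what lets me use this formula at every $l$ that occurs, while at $l = 0$ I use the exact value $d_-[J'_{K,0}(q)] = 0$. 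As before I would read the two piecewise formulas (\ref{pdeg6ja}) and (\ref{pdeg6jb}) with the lower sign and partition the domain into the same four cases according to the signs of $j-k$ and $j-\left(\tfrac{n-l}{2}+k\right)$. The goal is now to \emph{minimize} $d_-[g(j,k,l;q)]$ over the domain and to show the minimum is attained at a unique triple $(j_0,k_0,l_0)$, so that the lowest-degree terms of the double sum cannot cancel; then $\delta'^*_{M(K)}(n) = \min d_-[g] - d_-[\langle n\rangle] = \min d_-[g] + \tfrac12 n$ by Lemma~\ref{<n>}.

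In each case $d_-[g]$ is an explicit polynomial in $(j,k,l)$, and one checks that its minimum over $j$ always migrates to a case boundary, so that the search reduces to the extreme points of the domain. Two competing candidate minimizers emerge. The first is a ``large-$l$'' corner at $(j,k,l) = (n,n,3n)$, where $d_-[J'_{K,3n}(q)] = 9\alpha^*_i n^2 + 3\beta^*_i n + \gamma^*_i$ and a direct evaluation, using the second branch of (\ref{pdeg6ja}), gives $d_-[g] = (9\alpha^*_i-\tfrac12)n^2 + (3\beta^*_i-3)n + \gamma^*_i$, whence $\delta'^*_{M(K)}(n) = (9\alpha^*_i-\tfrac12)n^2 + (3\beta^*_i-\tfrac52)n + \gamma^*_i$. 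The second is a ``small-$l$'' corner, with $l\equiv n\pmod 2$ taken minimal ($l=0$, $k=n/2$, $j=2k$ for even $n$, and $l=1$ for odd $n$); there the $\delta'^*_K$-contribution is a bounded constant ($0$ for even $n$, and $\delta'^*_K(1)=\alpha^*_1+\beta^*_1+\gamma^*_1$ for odd $n$) and the polynomial and $6j$ terms produce the baseline $-\tfrac54 n^2 - 2n$, giving $\delta'^*_{M(K)}(n) = -\tfrac54 n^2 - \tfrac32 n + C'_i$ with $C'_0 = 0$ and $C'_1 = \alpha^*_1+\beta^*_1+\gamma^*_1 - \tfrac14$. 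Comparing leading coefficients, $9\alpha^*_i - \tfrac12$ versus $-\tfrac54$, these cross exactly at $\alpha^*_i = -\tfrac{1}{12}$: the large-$l$ corner is strictly lower when $\alpha^*_i < -\tfrac{1}{12}$, and the small-$l$ corner is strictly lower when $\alpha^*_i > -\tfrac{1}{12}$. This reproduces the three displayed formulas.

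The hypotheses enter exactly where the mirror conditions entered the max-degree proof. The inequality $-2\alpha^*_i+\beta^*_i+\tfrac12 \ge 0$ is the analog of $-2\alpha_i+\beta_i+\tfrac12\le 0$ and guarantees that the one-variable optimization of $\alpha^*_i l^2 + (\beta^*_i-\tfrac12)l + \gamma^*_i$ picks out the intended endpoint of the $l$-range; the condition $\gamma^*_0 \ge 0$ (equivalently $a^*_1+b^*_1+c^*_1\ge 0$) pins the small-$l$ minimizer at $l=0$ rather than at a competing small even value for even $n$. At the borderline $\alpha^*_i=-\tfrac{1}{12}$ the two candidates share the leading coefficient $-\tfrac54$, and the assumption $\beta^*_i\ne-\tfrac13$ is used to break the tie in the linear term so that a single minimizer survives --- the precise counterpart of the assumption ``$\alpha_i=0\Rightarrow\beta_i\ne-\tfrac12$'' in Proposition~\ref{maxdeg_Mazur_normalized}. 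Finally, for the ``Furthermore'' statement I would observe that when $\alpha^*_i<-\tfrac{1}{12}$ the unique minimizer sits at $l=3n$, so only the large-$l$ values of $d_-[J'_{K,l}(q)]$ are needed; the argument therefore goes through verbatim when $\delta'^*_K$ is known to equal $d_-[J'_{K,n}(q)]$ only for sufficiently large $n$.

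The main obstacle I anticipate is the bookkeeping of the four-case minimization: one must verify that in each case the minimum over $j$ slides to the correct boundary and that all the boundary comparisons funnel down to exactly the two corners above, with no third competitor (for instance an intermediate-$l$ point such as $(0,0,n)$ or $(n,n/2,2n)$) undercutting them in any sub-range of $\alpha^*_i$. This is genuinely more delicate than the max-degree computation, both because the small-$l$ minimizer sits at an interior corner $(n,n/2,0)$ rather than at the clean corner $j=0,k=n$ of the max-degree case, and because $\delta'^*_K(l)$ must now be controlled across the whole band $0 \le l \le 3n$; the tie-breaking at $\alpha^*_i = -\tfrac{1}{12}$ and the careful use of $\gamma^*_0\ge 0$ to exclude nearby small-$l$ competitors are the points requiring the most care.
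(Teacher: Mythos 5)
Your overall route is the paper's route --- the same double sum from Proposition~\ref{CJP_M(K)}, Lemma~\ref{lem:partialdegreecomputation} with the lower sign, the same four-case partition, the same two candidate minimizers $(n,n,3n)$ and $(n,\tfrac{n}{2},0)$ (resp.\ $l=1$ for odd $n$), the same crossover at $\alpha^*_i=-\tfrac{1}{12}$, and the same mechanism for the ``Furthermore'' clause --- and all of your endpoint values agree with the paper's. But the step you compress into ``the minimum over $j$ always migrates to a case boundary, so the search reduces to the extreme points of the domain'' is where the real work lies, and as stated it is false. In the paper's proof the minimum does funnel into Case (4) and then to $j=\min\{n,2k\}$, but on the branch $j=n$ (the case $2k\ge n$) the function $d_-[g(n,k,l;q)]$ is an \emph{upward} parabola in $k$, minimized at the integer nearest the interior vertex $\tfrac16(1+l+3n)$ --- not at an extreme point of the domain --- for every $l$ strictly between $0$ and $3n$. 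Comparing corner values alone cannot show that these interior points never undercut your two corners; one must form the envelope $h_A(l)=(\alpha^*_i+\tfrac{1}{12})l^2+(\beta^*_i-\tfrac13)l+\gamma^*_i$ (and, for odd $n$ on the branch $k<\tfrac n2$, a second envelope $h_B$) and prove it is monotone on the relevant range, increasing when $12\alpha^*_i+1\ge 0$ and decreasing when $12\alpha^*_i+1<0$. That leading coefficient $\alpha^*_i+\tfrac1{12}$ is also where the threshold $-\tfrac{1}{12}$ genuinely comes from; in your sketch it appears only from comparing the two corner values, which presupposes the minimum is at a corner, and the monotonicity quadratic you name, $\alpha^*_i l^2+(\beta^*_i-\tfrac12)l+\gamma^*_i$, is not the correct envelope.

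Two further points that your uniqueness claim glosses over. First, when $l+1\equiv 0\pmod 3$ the nearest-integer minimization in $k$ has \emph{two} solutions, $\tfrac16(1+l+3n)\pm\tfrac12$, so the minimizing triple is not unique for those $l$; the paper copes by showing these tied degrees sit $\tfrac13$ above the envelope value, hence strictly above the global minimum, so no cancellation can reach the bottom term. Second, in the degenerate subcase $8\alpha^*_1+1=0$, $\beta^*_1=\tfrac12$ of the branch $k<\tfrac n2$, the minimizing degree is independent of $l$ and cancellation cannot be excluded at all; the paper settles for a lower bound there and wins only because that bound, $-\tfrac54 n^2-\tfrac12 n+d$, still lies strictly above the other branch's minimum $-\tfrac54 n^2-2n+h_A(i)$ for large $n$. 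So the ``bookkeeping'' you defer is not a routine check of corners: the envelope analysis ($h_A$, $h_B$, the mod-$3$ ties, the even/odd split) \emph{is} the proof. Your proposal predicts the correct formulas and assigns the hypotheses their correct roles, but it does not yet contain the argument that establishes them.
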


\begin{proof}
From Proposition~\ref{CJP_M(K)} we have that 
\begin{equation*}
\langle n \rangle J'_{M(K), n}(q) =\sum_{\substack{j,k=0 \\ j\le 2k}}^n \left( \sum_{\substack{|2k-n| \le l \le 2k+n \\ n+l:even}}
g(j,k,l;q) \right).
\end{equation*}
Thus our goal now is to prove that
in each case of $\alpha^*_i>0$ and $\alpha^*_i \le 0$,
for each fixed integer $n \geq 1$,
 there exists a unique triple  $(j_0,k_0,l_0)$ such that 
\[
  \min_{\substack{0\le j,k \le n \\ j \le 2k\\ |2k-n|\le l \le 2k+n \\ n+l:even}} d_-[g(j,k,l;q)]
    =d_-[g(j_0,k_0,l_0;q)].
\]
Then, due to the uniqueness, there can be no cancellation among the lowest degree terms of the double sum Equation~(\ref{doublesum}) shown above so that $\delta'^*_{M(K)}(n)$ may be identified.

\medskip
From Lemma~\ref{lem:partialdegreecomputation}, 
we have that 
\begin{multline*}
d_-[g(j,k,l;q)] = \frac{1}{2}\left(-2j - 2j^2 + 3k + k^2 - \frac{l}{2} + \frac{n}{2} - n^2\right) + d_-[J'_{K,l}(q)] \\
+  \left\{\begin{array}{ll}
           - \frac 1 2 (j+k+\frac l 2+\frac n2) & (j\le \frac {n-l} 2+k), \\
           - \frac 1 2 (-j^2+2jk-k^2+2k-jl+kl+jn-kn-\frac {l^2} 4+ \frac n 2 l-\frac{n^2} 4+n) 
              & (j\ge \frac {n-l} 2+k),
            \end{array}
      \right.  \\
+ \left\{\begin{array}{ll}
            -\frac 1 2 (j+k+n) & (j\le k), \\
            -\frac 1 2 (-j^2+2jk-k^2+2k+n) 
              & (j\ge k).
            \end{array}
      \right. 
\end{multline*}

To handle these last two summands, we will accordingly partition the $(j,k,l)$ domain and consider the following four cases:

\medskip

\noindent
\textbf{Case (1).}\quad 
$j\le k$ and  $j\le \frac {n-l} 2+k$.

\medskip

\noindent
\textbf{Case (2).}\quad  
$j\le k$ and $j\ge \frac {n-l} 2+k$. 

\medskip

\noindent
\textbf{Case (3).}\quad 
$j\ge k$ and  $j\le \frac {n-l} 2+k$. 

\medskip

\noindent
\textbf{Case (4).}\quad  
$j\ge k$ and $j\ge \frac {n-l} 2+k$. 

\medskip

\begin{claim}\label{claim:incase4}
Any $(j,k,l)$ realizing the minimum of $d_-[g(j,k,l;q)]$ satisfies $j\ge k$ and $j\ge \frac {n-l} 2+k$.  That is, it occurs in the domain of Case (4).
\end{claim}
\begin{proof}

We will show the minima of $d_-[g(j,k,l;q)]$ on the domain of Case (1) occurs at its boundary with the domains of Cases (2) and (3), and the minima on their domains occur at their boundaries with Case (4).

\medskip

\noindent
\textbf{Case (1).}\quad 
$j\le k$ and  $j\le \frac {n-l} 2+k$.

From the equalities (\ref{pdeg3j}), (\ref{pdegdelta}), (\ref{pdeg6ja}),  and (\ref{pdeg6jb}), 
one can  see that 

\begin{eqnarray*}
d_-[g(j,k,l;q)]&=&\frac 1 2(-4j-2j^2+k+k^2-l-n^2-n)+d_-[J'_{K, l}(q)]\\
    &=&-(j+1)^2+\frac 1 2(2+k+k^2-l-n^2-n)+d_-[J'_{K, l}(q)]. 
\end{eqnarray*}

If $n\ge l$, then $0 \le k \le \frac{n-l}{2}+k$ and $d_-[g(j,k,l;q)]$ is minimized at $j=k$, 
and so it is contained in the case (3). 
If $n\le l$, then $\frac{n-l}{2}+k \le k$ and $d_-[g(j,k,l;q)]$ is minimized at $j= \frac {n-l} 2+k$, 
and so it is contained in the case (2).

\medskip

\noindent
\textbf{Case (2).}\quad  
$j \le k$ and $j \ge \frac{n-l}{2}+k$. 

In this case we have 
\begin{multline*}
d_-[g(j,k,l; q)]\\
\phantom{xxxxx} = \frac 1 2(-3j-j^2 -2jk +2k^2+jl-kl-jn+kn+\frac{l^2}{4}-\frac{n+1}{2} l
      -\frac{3}{4}n^2-\frac{3}{2}n)+d_-[J'_{K, l}(q)],\\
 \phantom{xxxxx}  =-\frac{1}{2}(j-\frac{1}{2}(-3-2k+l-n))^2+g_4(k,l,n),
\end{multline*} 
where $g_4(k,l,n)$ is a function of $k$, $l$, and $n$. 

Since $l \le 2k + n$ (Proposition~\ref{CJP_M(K)}), 
$-2k + l - n \le 0$, and thus   
$\frac{1}{2}(-3-2k+l-n) < 0$.  
Also $l \le 2k + n$, together with the assumption of Case (2), 
implies $0 \le \frac{n-l}{2}+k \le j \le k$. 
Hence $d_-[g(j,k,l; q)]$ is minimized at $j= k$. 
This shows that Case (2) is contained in Case (4). 

\medskip

\noindent
\textbf{Case (3).}\quad 
$j\ge k$ and  $j\le \frac {n-l} 2+k$. 

In this case we have 
\begin{eqnarray*}
d_-[g(j,k,l;q)]&=&\frac{1}{2}(-3j -j^2 -2jk+2k^2-l-n^2-n)+d_-[J'_{K, l}(q)],\\
	&=& -\frac{1}{2}(j-\frac{1}{2}(-3-2k))^2 +g_5(k,l,n),
\end{eqnarray*} 
where $g_5(k,l,n)$ is a function of $k$, $l$, and $n$. 

Since $\frac{1}{2}(-3-2k)  < 0$ and $0 \le k \le j \le \frac{n-l}{2}+k$, 
$d_-[g(j,k,l;q)]$ is minimized at  $j= \frac {n-l}{2}+k\le 2k$.
(Because $|2k-n| \le l$, we have $\frac{n-l}{2} \le k$.) 
Thus, this case is contained in the Case (4).
\end{proof}

Due to Claim~\ref{claim:incase4}, any triple $(j,k,l)$ that minimizes $d_-[g(j,k,l;q)]$ satisfies:

\noindent
{\bf Case (4)}.  $j\ge k$ and $j\ge \frac {n-l} 2+k$. 

We henceforth assume $(j,k,l)$ satisfies these inequalities.
Then we have
\begin{multline*}
d_-[g(j,k,l;q)]\\
\phantom{xxxxx}=\frac{1}{2}(-2j-k-4jk+3k^2+jl-kl-jn+kn+\frac {l^2} 4-\frac {n+1} 2 l
      -\frac{3}{4}n^2-\frac 3 2n)+d_-[J'_{K, l}(q)]\\
\phantom{xxxxx}=\frac{1}{2}(-2-4k+l-n)j+ g_6(k,l,n),
\end{multline*} 
where $g_6(k,l,n)$ is a function of $k$, $l$, and $n$. 

Since $l \le 2k + n$ (Proposition~\ref{CJP_M(K)}), 
$-2k + l -n < 0$, 
and hence $-2-4k + l -n = (-2-2k) + (-2k +l -n) < 0$. 
Following Proposition~\ref{CJP_M(K)}, $j \le \mathrm{min}\{ n, 2k \}$.
Hence $d_-[g(j,k,l;q)]$ is minimized at either $j=n$ or $j=2k$, depending on which is smaller.
So we now separate the argument into those two cases, either (A) $n  \le 2k$ or (B) $n \ge 2k$. 

\medskip

\bigskip

\noindent
\textbf{Case (4A)}\   Assume $n \le 2k$.

The function $d_-[g(j,k,l;q)]$ of $j$ is minimized at $j=n$. One can see that 
\begin{align*}
\min_{\max \{k,\frac {n-l} 2+k \}\le j \le n}d_-[g(j,k,l;q)]
 &=  d_-[g(n,k,l;q)]\\
&=\frac{1}{2}(-k+3k^2-kl-3kn+\frac {l^2}{4}+\frac{n-1}{2}l
      -\frac{7}{4}n^2-\frac{7}{2}n)\\ 
      & \phantom{justanotherbighugegrandioussteptotheright}+d_-[J'_{K, l}(q)]\\
      &=\frac{3}{2}(k-\frac{1}{6}(1+l+3n))^2+ \frac{1}{12}l^2 - \frac{1}{3}l -\frac{5}{4}n^2 - 2n - \frac{1}{24}\\
      & \phantom{justanotherbighugegrandioussteptotheright}+d_-[J'_{K, l}(q)].
\end{align*}

\medskip
Therefore $d_-[g(n,k,l;q)]$ is minimized when $k$ is an integer closest to $\frac{1}{6}(1+l+3n)$.
Since $n+l$ is even, 
the numerator is odd.  
So $1+l+3n$ equals either $6m + 1$, $6m +3$, or $6m + 5$ for some integer $m$.  
Hence, 
$l+1$ equals either $1, 0$ or $-1$ mod $3$ respectively. 
Hence, depending on the value of $l+1$ mod $3$, 
we have that $d_-[g(n,k,l;q)]$ is minimized exactly at
$k =\frac{1}{6}(1+l+3n)-\frac{x}{6}$ for $x \in \{-3,-1,1,3\}$.  
More specifically, the minimum occurs at
 the following values of $k$:

\begin{center}
\begin{tabular}{rl}
 ($l+1 \equiv_3 1$): & $k=\frac{1}{6}(1+l+3n)-\frac{1}{6}$ \\
 ($l+1 \equiv_3 -1$):  &$k=\frac{1}{6}(1+l+3n)+\frac{1}{6}$\\
 ($l+1 \equiv_3 0$): & $k=\frac{1}{6}(1+l+3n)-\frac{3}{6}$ and  $k=\frac{1}{6}(1+l+3n)+\frac{3}{6}$.
\end{tabular}
\end{center}
See Figure~\ref{min_deg_case4}.

\begin{figure}[!ht]
\includegraphics[width=1.0\linewidth]{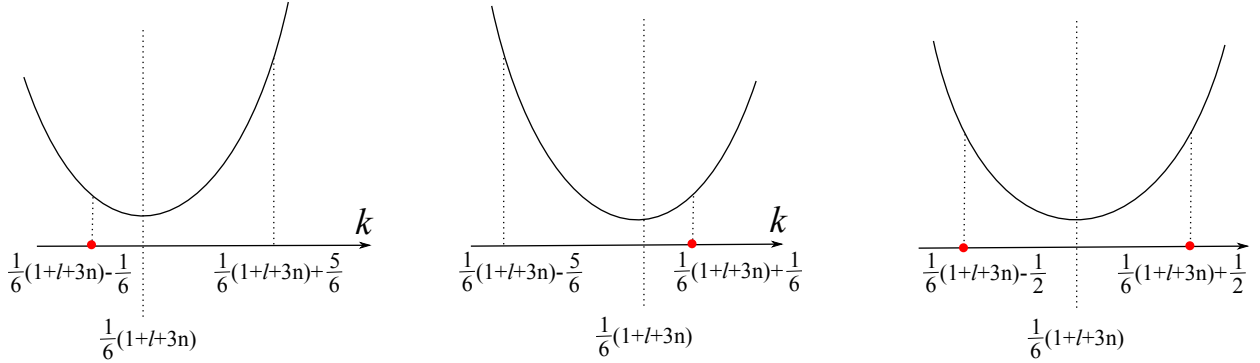}
\caption{The integer closest to the axis;
$l+1  \equiv_3 1 $ (Left), 
$l+1  \equiv_3 -1$ (Middle) and 
$l+1 \equiv_3 -0$ (Right).
}
\label{min_deg_case4}
\end{figure}

Now recall that we aim to determine the minimum degree of the sum 
\[ \sum_{\substack{0\le j,k \le n \\ |2k-n|\le l \le 2k+n \\ n+l:even}} g(j,k,l;q) \]
which we have determined occurs only when $j\geq k$ and $j \geq \frac{n-l}{2} + k$.  
Furthermore we are presently examining the case that $2k \ge n$ in which any minimum occurs at $j=n$ and where we have special values for $k$ depending on $l+1$ mod $3$.
Hence, let us investigate the minimum degree of the sum
\begin{multline*}
\sum_{\substack{0\le k \le n  \le 2k \\ 2k-n\le l \le 2k+n \\ n+l:even}} g(n,k,l;q) =\\
=\sum_{\substack{0 \le l \le 3n\\ l+1 \equiv 1\ \mathrm{mod}\ 3\\ n + l \colon \mathrm{even}}}
g(n, \frac{1}{6}(1+l+3n)- \frac{1}{6}, l; q)
+ 
\sum_{\substack{0 \le l \le 3n\\ l+1 \equiv -1\ \mathrm{mod}\ 3\\ n + l \colon \mathrm{even}}}
g(n, \frac{1}{6}(1+l+3n)+ \frac{1}{6}, l; q)
\\ 
\phantom{a bit of space}
+
\sum_{\substack{0 \le l \le 3n\\ l+1 \equiv 0\ \mathrm{mod}\ 3\\ n + l \colon \mathrm{even}}}
\left(g\bigl(n, \frac{1}{6}(1+l+3n)- \frac{1}{2}, l; q \bigr)+ g\bigl(n, \frac{1}{6}(1+l+3n)+ \frac{1}{2}, l; q\bigr)\right), 
\tag{$\dagger$}
\end{multline*}

\medskip

where the inequalities $k \le n \le 2k$ and $2k-n \le l \le 2k+n$ implies $0 \le l \le 3n$.

To compare the functions $g(n, \frac{1}{6}(1+l+3n) + \frac{x}{6}, l; q)$ for $x \in \{-3,-1,1,3\}$, it will be convenient to consider the function
\begin{equation}\label{hAl}
h_A(l) = \begin{cases} (\alpha^*_i + \frac{1}{12})l^2 + (\beta^*_i - \frac{1}{3})l +\gamma^*_i & l \geq 1 \\
0 & l=0 \end{cases}.
\end{equation}
Recall that  $\delta'^*_K(n) = \alpha^*_i n^2 + \beta^*_i n + \gamma^*_i$ and we have defined $i \in \{0,1\}$ as the value of $n \mod 2$.  Furthermore, since $n + l$ is even, $n$ and $l$ have the same parity. 
So for a fixed $n$, $l$ varies over only even integers if $n$ is an even integer, and 
$l$ varies over only odd integers if $n$ is an odd integer. 
In particular $i \in \{0,1\}$ is also the value of $l \mod 2$.

Furthermore, note that we have only assumed that 
$d_-[J'_{K,n}(q)]=\delta'^*_K(n)= \alpha^*_i n^2 + \beta^*_i n + \gamma^*_i$ for $n \geq 1$. 
So $d_-[J'_{K,0}(q)]$ need not equal $\gamma^*_0$.  
Regardless, since $J'_{K, 0}(q) = 1$, 
we have $d_-[J'_{K, 0}(q)] = 0$.  
For this reason, we define $h_A(0) = 0$.

{\bf Subcase ($l +1\equiv_3 1$)}:
The function $d_-[g(n,k,l;q)]$ is minimized at $k= \frac{1}{6}(1+l+3n)- \frac{1}{6}$.
The situation that $l=0$ needs to be considered separately.  

For $l \ne 0$, we have: 
\begin{eqnarray*}
d_-[g(n, \frac{1}{6}(1+l+3n)- \frac{1}{6}, l; q)]
&=&\frac{1}{12}l^2 - \frac{1}{3}l - \frac{5}{4}n^2 - 2n + d_-[J'_{K, l}(q)]\\ 
&=&\frac{1}{12}l^2 - \frac{1}{3}l - \frac{5}{4}n^2 - 2n +\alpha^*_i l^2 + \beta^*_i l + \gamma^*_i \\ 
&=&(\alpha^*_i + \frac{1}{12})l^2 + (\beta^*_i - \frac{1}{3})l +\gamma^*_i - \frac{5}{4}n^2 - 2n\\
&=& h_A(l) - \frac{5}{4}n^2 - 2n, 
\end{eqnarray*}

If $l = 0$, then we have
\begin{eqnarray*}
d_-[g(n, \frac{1}{6}(1+3n)- \frac{1}{6}, 0; q)]
&=& - \frac{5}{4}n^2 - 2n + d_-[J'_{K, 0}(q)]\\ 
&=& -\frac{5}{4}n^2 - 2n
\end{eqnarray*}

\medskip

{\bf Subcase  ($l+1 \equiv_3 -1$)}:
The function $d_-[g(n,k,l;q)]$ is minimized at $k= \frac{1}{6}(1+l+3n)+ \frac{1}{6}$ and we may calculate:

\begin{eqnarray*}
d_-[g(n, \frac{1}{6}(1+l+3n)+\frac{1}{6}, l; q)]
&=&\frac{1}{12}l^2 - \frac{1}{3}l - \frac{5}{4}n^2 - 2n + d_-[J'_{K, l}(q)]\\ 
&=&\frac{1}{12}l^2 - \frac{1}{3}l - \frac{5}{4}n^2 - 2n +\alpha^*_i l^2 + \beta^*_i l + \gamma^*_i \\ 
&=&(\alpha^*_i + \frac{1}{12})l^2 + (\beta^*_i - \frac{1}{3})l +\gamma^*_i - \frac{5}{4}n^2 - 2n\\
&=& h_A(l) - \frac{5}{4}n^2 - 2n. 
\end{eqnarray*} 

\medskip

{\bf Subcase ($l+1 \equiv_3 0$)}:
The function $d_-[g(n,k,l;q)]$ is minimized at each at $k= \frac{1}{6}(1+l+3n)- \frac{1}{2}$ and  $k= \frac{1}{6}(1+l+3n)+ \frac{1}{2}$.   Note that potentially the terms of degrees $d_-[g(n, \frac{1}{6}(1+l+3n)- \frac{1}{2}, l; q]$ and $d_-[g(n, \frac{1}{6}(1+l+3n) + \frac{1}{2}, l; q]$ might cancel.  
We may calculate their degrees:
\begin{eqnarray*}
& &d_-[g(n, \frac{1}{6}(1+l+3n) - \frac{1}{2}, l; q)] = d_-[g(n, \frac{1}{6}(1+l+3n)+ \frac{1}{2}, l; q)]\\
&=& \frac{1}{12}l^2 - \frac{1}{3}l - \frac{5}{4}n^2 - 2n + \frac{1}{3} + d_-[J'_{K, l}(q)]\\
&=& (\alpha^*_i + \frac{1}{12})l^2 + (\beta^*_i - \frac{1}{3})l +\gamma^*_i - \frac{5}{4}n^2 - 2n + \frac{1}{3}\\
&=& h_A(l) - \frac{5}{4}n^2 - 2n + \frac{1}{3}.
\end{eqnarray*}

\medskip

Summarizing, we have found that
\[
  \left\{
    \begin{array}{ll}
      d_-[g(n, \frac{1}{6}(1+l+3n)- \frac{1}{6}, l; q)] = h_A(l) - \frac{5}{4}n^2 - 2n & (l+1 \equiv 1\ \mathrm{mod}\ 3) \\
      d_-[g(n, \frac{1}{6}(1+l+3n) + \frac{1}{6}, l; q)] = h_A(l) - \frac{5}{4}n^2 - 2n  & (l+1 \equiv -1\ \mathrm{mod}\ 3) \\
      d_-[g(n, \frac{1}{6}(1+l+3n) - \frac{1}{2}, l; q)] =h_A(l) - \frac{5}{4}n^2 - 2n + \frac{1}{3}  & (l+1 \equiv 0\  \mathrm{mod}\ 3)\\
       d_-[g(n, \frac{1}{6}(1+l+3n)+ \frac{1}{2}, l; q)]
= h_A(l) - \frac{5}{4}n^2 - 2n + \frac{1}{3}  & (l+1 \equiv 0\  \mathrm{mod}\ 3)
    \end{array}
  \right.
\]
where from Equation~\ref{hAl} we have
\[h_A(l) = \begin{cases} (\alpha^*_i + \frac{1}{12})l^2 + (\beta^*_i - \frac{1}{3})l +\gamma^*_i & l \geq 1 \\ 0 & l=0 \end{cases}.\]
Due to the leading term of $h_A(l)$, there are three cases to consider depending upon whether (i) $12\alpha^*_i + 1 > 0$,\ 
(ii) $12\alpha^*_i + 1 = 0$, or (iii) $12\alpha^*_i + 1 < 0$. 
Note that if $12\alpha^*_i + 1 \ne 0$ and $l\neq 0$, 
we may write
\begin{equation*}
h_A(l) = (\alpha^*_i + \frac{1}{12})\left(l + \frac{2(3\beta^*_i - 1)}{12\alpha^*_i + 1}\right)^2 
- \frac{(3\beta^*_i - 1)^2}{3(12\alpha^*_i + 1)} + \gamma^*_i.
\end{equation*}

\smallskip
As discussed previously, since $n+l$ is even, the parity of $l$ agrees with that of $n$.
For each choice of $n$ even or $n$ odd, we need to see among the sum $(\dagger)$ which value of $l$ gives the minimum degree where $l$ varies over  even integers or over odd integers, respectively.
Hence we now examine the two cases \textbf{Case (4--A--even)} where $n$ is even and \textbf{Case (4--A--odd)} where $n$ is odd.  As $i$ equals the reduction of $n$ mod $2$, these two cases each split into three according to whether $12\alpha^*_i + 1$ is positive, zero, or negative
\medskip

\noindent
\textbf{Case (4--A--even)}
Assume that $n$ is even. 
Then as mentioned above, $l$ varies over only even integers and 
$(\alpha^*_i, \beta^*_i, \gamma^*_i) = (\alpha^*_0, \beta^*_0, \gamma^*_0)$.

\medskip

\noindent
\textbf{Case (4--A--even--(i))}
Assume $12\alpha^*_0 + 1 > 0$. 
Then using the initial assumption $-2\alpha^*_0 + \beta^*_0 - \frac{1}{2} \ge 0$, 
we have
$\beta^*_0 - \frac{1}{2} \ge 2\alpha^*_0 > -\frac{1}{6}$, 
and thus $3\beta^*_0 - 1 > 0$. 
This implies $-\frac{2(3\beta^*_i - 1)}{12\alpha^*_i + 1} < 0$. 
Hence $h_A(l)$ is a strictly increasing function on $[0, 3n]$. (Recall that $\gamma^*_0 \ge 0$ by the assumption of Proposition~\ref{mindeg_Mazur_normalized} and $h_A(0) = 0$ by the definition of $h_A(l)$.)

Since $h_A(l)$ is strictly increasing on $[0, 3n]$ and 
$h_A(0) = 0$,
\[
- \frac{5}{4}n^2 - 2n = h_A(0)- \frac{5}{4}n^2 - 2n 
< h_A(4) - \frac{5}{4}n^2 - 2n
\]
 and 
\[
- \frac{5}{4}n^2 - 2n = h_A(0) - \frac{5}{4}n^2 - 2n 
< h_A(2)  - \frac{5}{4}n^2 - 2n + \frac{1}{3}. 
\]
It follows that $(*)$ is uniquely minimized at $l = 0$ and 
\[
d_-[g(n, \frac{1}{6}(1+3n)- \frac{1}{6}, 0; q)] = - \frac{5}{4}n^2 - 2n.
\] 

\medskip

\noindent
\textbf{Case (4--A--even--(ii))}
Assume $12\alpha^*_0 + 1 = 0$. 
The assumption $-2\alpha^*_0 + \beta^*_0 - \frac{1}{2} \ge 0$ implies 
$\beta^*_0 - \frac{1}{2} \ge 2\alpha^*_0 = -\frac{1}{6}$, 
i.e. $\beta^*_0 \ge \frac{1}{3}$. 
Recall that since we assumed $(\alpha^*_0, \beta^*_0) \ne (-\frac{1}{12}, \frac{1}{3})$, 
we have $\beta^*_0 > \frac{1}{3}$. 
Then because $12\alpha^*_0 + 1 = 0$, we have 
$h_A(l) = (\beta^*_0 - \frac{1}{3})l + \gamma^*_0$ for $l \geq 1$ and $h_A(0) = 0$, 
which is a strictly increasing function on $[0, 3n]$.

Since $h_A(l)$ is strictly increasing on $[0, 3n]$ and 
$\gamma^*_0 \ge 0$ (by the assumption of Proposition~\ref{mindeg_Mazur_normalized}),
$(\dagger)$ is uniquely minimized at $l = 0$ and 
\[
d_-[g(n, \frac{1}{6}(1+3n)- \frac{1}{6}, 0; q)] = - \frac{5}{4}n^2 - 2n.
\] 

\medskip

\noindent
\textbf{Case (4--A--even--(iii))}
Assume $12\alpha^*_0 + 1 < 0$. 
Then $-\frac{2(3\beta^*_i - 1)}{12\alpha^*_i + 1} < n$ for sufficiently large $n$,
and 
$h_A(l)$ is strictly decreasing on $[n, 3n]$.

Since $h_A(l)$ is strictly decreasing on $[n, 3n]$, 
\[
h_A(3n)-\frac{5}{4}n^2 - 2n < h_A(3n-2) -\frac{5}{4}n^2 - 2n < h_A(3n-4) -\frac{5}{4}n^2 - 2n + \frac{1}{3}, 
\]
and hence $(\dagger)$  is uniquely minimized at $l = 3n$ so that 
\[
d_-[g(n, \frac{1}{6}(1+6n)- \frac{1}{6}, 3n; q)] = h_A(3n) - \frac{5}{4}n^2 - 2n 
= (9\alpha^*_0 - \frac{1}{2})n^2 + (3\beta^*_0 -3) n + \gamma^*_0
\] 

\medskip

It follows from Cases (4--A--even--(i), (ii), (iii)), 
we obtain the following. 
For suitably large $n$,
\[
d_-\left[
\sum_{\substack{k \ge \frac{n}{2}\\ |2k-n| \le l \le 2k+n\\ n,\ l \colon even}} g(n, k, l; q)\right]
= 
\begin{cases}
-\frac{5}{4}n^2 - 2n & \mathrm{if}\ 12\alpha^*_0 + 1 > 0, \\
-\frac{5}{4}n^2 - 2n & \mathrm{if}\ 12\alpha^*_0 + 1 = 0, \\
(9\alpha^*_0 - \frac{1}{2})n^2 + (3\beta^*_0 -3) n + \gamma^*_0 & \mathrm{if}\ 12\alpha^*_0 + 1 < 0
\end{cases}
\]

\bigskip

\noindent
\textbf{Case (4--A--odd)}
Assume that $n$ is odd. 
Then as mentioned above,
$l$ varies over only odd integers and 
$(\alpha^*_i,\ \beta^*_i, \gamma^*_i) = (\alpha^*_1,\ \beta^*_1, \gamma^*_1)$.   Notably, we need not be bothered with the special case of $h_A(l)$ for $l=0$.

\smallskip

\noindent
\textbf{Case (4--A--odd--(i))}
Assume
$12\alpha^*_1 + 1 > 0$. 
The initial assumption 
$-2\alpha^*_1 + \beta^*_1 - \frac{1}{2} \ge 0$, 
implies that $3\beta^*_1 - 1 > 0$.
Hence $-\frac{2(3\beta^*_1 - 1)}{12\alpha^*_1 + 1} < 0$. 
Then 
$h_A(l) = (\alpha^*_i + \frac{1}{12})l^2 + (\beta^*_i - \frac{1}{3})l +\gamma^*_i$ is a strictly increasing function on $[0, 3n]$.

Since $h_A(1) < h_A(3) < h_A(5)$, 
$(\dagger)$ is uniquely minimized at $l = 1$ and 
\[
d_-[g(n, \frac{1}{6}(2+3n)+ \frac{1}{6}, 1; q)] = h_A(1) - \frac{5}{4}n^2 - 2n 
= - \frac{5}{4}n^2 - 2n +\alpha^*_1 +\beta^*_1 + \gamma^*_1 - \frac{1}{4}. 
\]

\medskip

\noindent
\textbf{Case (4--A--odd--(ii))}
Assume $12\alpha^*_1 + 1 = 0$. 
As in Case (4--A--even--(ii)), 
$\beta^*_1 > \frac{1}{3}$ and 
$h_A(l) = (\beta^*_1 - \frac{1}{3})l$ is a strictly increasing function on $[0, 3n]$. 

Since 
\[
h_A(1) -\frac{5}{4}n^2 - 2n < h_A(5) - \frac{5}{4}n^2 - 2n + \frac{1}{3}
\]
and 
\[
h_A(1) -\frac{5}{4}n^2 - 2n < h_A(3) - \frac{5}{4}n^2 - 2n,
\]
$(\dagger)$ is uniquely minimized at $l = 1$ and 
\[
d_-[g(n, \frac{1}{6}(2+3n)- \frac{1}{6}, 1; q)] = h_A(1) - \frac{5}{4}n^2 - 2n
= - \frac{5}{4}n^2 - 2n +\alpha^*_1 +\beta^*_1 + \gamma^*_1 - \frac{1}{4}. 
\] 

\medskip

\noindent
\textbf{Case (4--A--odd--(iii))}
Assume $12\alpha^*_1 + 1 < 0$. 
Then as in Case (4--A--even--(iii)), 
For sufficiently large $n$, 
$h_A(l)$ is strictly decreasing on $[n, 3n]$ and 
\[
h_A(3n)-\frac{5}{4}n^2 - 2n < h_A(3n-2) -\frac{5}{4}n^2 - 2n < h_A(3n-4) -\frac{5}{4}n^2 - 2n + \frac{1}{3}, 
\]
and hence $(\dagger)$ is uniquely minimized at $l = 3n$ so that  
\[
d_-[g(n, \frac{1}{6}(1+6n)- \frac{1}{6}, 3n; q)] = h_A(3n) - \frac{5}{4}n^2 - 2n 
= (9\alpha^*_1 - \frac{1}{2})n^2 + (3\beta^*_1 -3) n + \gamma^*_1
\]

\medskip

It follows from Cases (4--A--odd--(i), (ii), (iii)), 
we obtain the following. 
For suitably large $n$, 
\[
d_-\left[
\sum_{\substack{k \ge \frac{n}{2}\\ |2k-n| \le l \le 2k+n\\ n,\ l \colon odd}} g(n, k, l; q)\right]
= 
\begin{cases}
- \frac{5}{4}n^2 - 2n +\alpha^*_1 +\beta^*_1 + \gamma^*_1 - \frac{1}{4} & \mathrm{if}\ 12\alpha^*_1 + 1 > 0, \\
- \frac{5}{4}n^2 - 2n +\alpha^*_1 +\beta^*_1 + \gamma^*_1 - \frac{1}{4} & \mathrm{if}\ 12\alpha^*_1 + 1 = 0, \\
(9\alpha^*_1 - \frac{1}{2})n^2 + (3\beta^*_1 -3) n + \gamma^*_1 & \mathrm{if}\ 12\alpha^*_1 + 1 < 0
\end{cases}
\]

\bigskip

For convenience, 
we summarize Case (4--A) where $k \ge \frac{n}{2}$ as follows. 

\begin{claim}[$k \ge \frac{n}{2}$]
\label{4A_claim}
For suitably large $n$, 
\[
d_-\left[
\sum_{\substack{k \ge \frac{n}{2}\\ |2k-n| \le l \le 2k+n\\ n + l \colon even}} g(n, k, l; q)\right]
= 
\begin{cases}
-\frac{5}{4}n^2 - 2n + h_A(i) & \mathrm{if}\ 12\alpha^*_i + 1 > 0, \\
-\frac{5}{4}n^2 - 2n + h_A(i) & \mathrm{if}\ 12\alpha^*_i + 1 = 0, \\
(9\alpha^*_i - \frac{1}{2})n^2 + (3\beta^*_i -3) n + \gamma^*_i & \mathrm{if}\ 12\alpha^*_i + 1 < 0,
\end{cases}
\]
\end{claim}
where $h_A(0) = 0$ and $h_A(1) = \alpha^*_1 +\beta^*_1 + \gamma^*_1 - \frac{1}{4}$.

\begin{remark}\label{rem:4Aquadquasi}
In Cases (4--A--even(i), (ii)) and Cases (4--A--odd(i), (ii)) we specifically use the assumption that $d_-[J'_{K,l}(q)]=\delta'^*_K(l)$ for all $l \geq 1$ to conclude that $d_-[g(n,k,l;q)]$ cases occurs at $l=0$ or $l=1$ in these cases.  If $d_-[J'_{K,l}(q)]$ were only a quadratic quasi-polynomial for suitably large $l$, then $d_-[g(n,k,l;q)]$  might be much smaller and occur at another value of $l$ in these four cases.  Consequently, without the assumption, we could only obtain the equation of Claim~\ref{4A_claim} for $12\alpha^*_0 + 1 < 0$, Cases (4--A--even(iii)) and (4--A--odd(iii)).
\end{remark}

\vskip 1.0cm

\noindent
\textbf{Case (4--B)}\   Assume $2k \le n$.

The function 
$d_-[g(j, k, l; q)]$ of $j$ is minimized at $j = 2k$.
One can see that
\begin{align*}
  \min_{\max \{k,\frac {n-l} 2+k \}\le j \le 2k}d_-[g(j,k,l;q)]
&=  d_-[g(2k,k,l;q)]\\
&=\frac{1}{2}(-5k -5k^2 + kl - kn+\frac {l^2}{4}-\frac{n+1}{2}l
      -\frac{3}{4}n^2-\frac{3}{2}n)\\ 
      & \phantom{justanotherbighugegrandioussteptotheright}+d_-[J'_{K, l}(q)]\\
      &=-\frac{5}{2}(k - \frac{1}{10}(-5 + l -n))^2+ g_7(l, n),
\end{align*} 
where $g_7(l, n)$ is a function of $l$ and $n$. 

Since $l \le 2k + n$ (Proposition~\ref{CJP_M(K)}), 
the assumption $2k\le n$ implies that 
$l \le 2k + n \le 2n$. 
Thus
\[
\frac{1}{10}(-5 + l - n) \le \frac{1}{10}(-5 + 2n - n) = \frac{n-5}{10} < \frac{n-1}{4}.
\]
Therefore, since $0\leq k \leq \frac{n}{2}$, this shows that $d_-[g(2k,k,l;q)]$ is minimized at $k=\frac{n}{2}$ if $n$ is even and at $k=\frac{n-1}{2}$ if $n$ is odd. 
See Figure~\ref{4B}. 

\medskip

\noindent
\textbf{Case (4--B--even)}
If $n$ is even, then $\frac{n}{2}$ is an integer and $d_-[g(2k, k, l; q)]$ is minimized at $k = \frac{n}{2}$ (Figure~\ref{4B}(Left)).
Hence this case is contained in the Case (A). 

\medskip

\noindent
\textbf{Case (4--B--odd)}
If $n$ is odd, then $\frac{n-1}{2}$ is an integer and $d_-[g(2k, k, l; q)]$ is minimized at $k =\frac{n-1}{2}$ (Figure~\ref{4B}(Right)). 

\begin{figure}[!ht]
\includegraphics[width=0.65\linewidth]{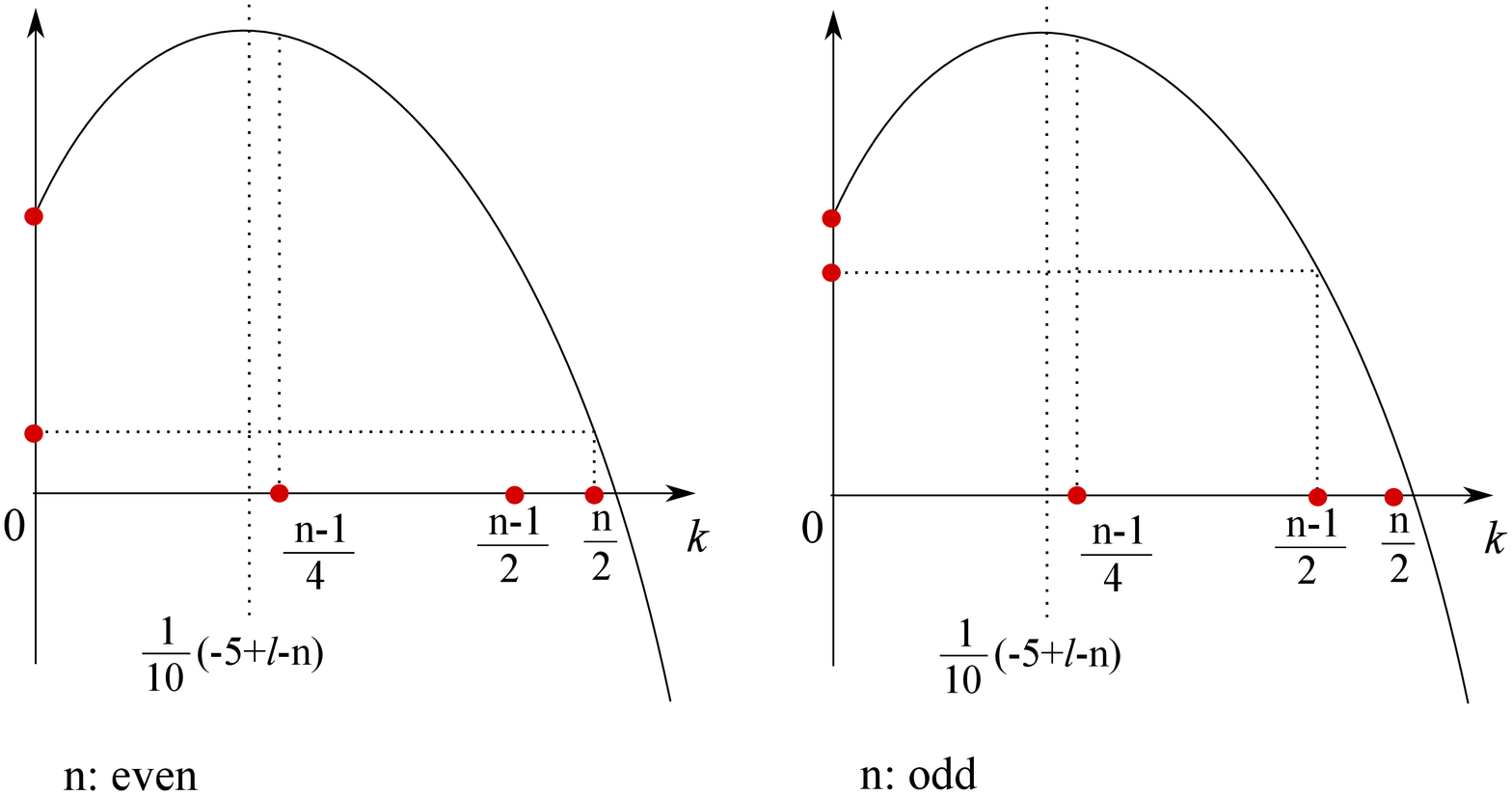}
\caption{When $n$ is odd, $d_-[g(2k, k, l; q)]$ is minimized at the integer $k =\frac{n-1}{2}$. }
\label{4B}
\end{figure}

\medskip

Hence we see that 
\begin{eqnarray*}
 \min_{k < \frac{n}{2}}d_-[g(2k, k, l; q)]
&=&  d_-[g(2(\frac{n-1}{2}), \frac{n-1}{2}, l; q)]\\
&=& \frac {1}{8}l^2- \frac{1}{2}l -\frac{5}{4}n^2- \frac{1}{2}n + \frac{5}{8}+d_-[J'_{K, l}(q)]\\
&=& h_B(l) - \frac{5}{4}n^2 - \frac{1}{2}n + \frac{5}{8}, 
\end{eqnarray*} 
where $h_B(l) = \frac {1}{8}l^2- \frac{1}{2}l + d_-[J'_{K, l}(q)]$. 
Since $l \le 2k + n$ (Proposition~\ref{CJP_M(K)}), 
we have $l \le (n-1) + n = 2n -1$. Similarly, $|2k-n|\le l$, so $1 \le l$ since $n$ is odd.  Moreover, $l$ is also odd because $n+l$ is even.  
Thus we may write $h_B(l)$ as 
\[
h_B(l) = (\alpha^*_1 + \frac{1}{8})l^2 + (\beta^*_1-\frac{1}{2})l + \gamma^*_1
\]
where $l$ ranges over the odd integers in the interval $[1,2n-1]$.

If $\alpha^*_1 + \frac{1}{8} \ne 0$, 
then 

\begin{align*}
h_B(l) &= (\alpha^*_1 + \frac{1}{8})\left(l + \frac{4(\beta^*_1 -\frac12)}{8\alpha^*_1+1}\right)^2 
-(\alpha^*_1 + \frac{1}{8})\left(\frac{4(\beta^*_1 -\frac12)}{8\alpha^*_1+1}\right)^2 + \gamma^*_1 \\
&= (\alpha^*_1 + \frac{1}{8})\left(l + \frac{4\beta^*_1 -2}{8\alpha^*_1+1}\right)^2 
-(\alpha^*_1 + \frac{1}{8})\left(\frac{4\beta^*_1 -2}{8\alpha^*_1+1}\right)^2 + \gamma^*_1
\end{align*}

Then there are three cases to consider 
(i) $8\alpha^*_1 + 1>0$,\ (ii) $8\alpha^*_1 + 1 = 0$, or (iii) $8\alpha^*_1 + 1 < 0$. 

\medskip

\noindent
\textbf{Case (4--B--odd--(i))}
Assume that $8\alpha^*_1 + 1 > 0$. 
For sufficiently large $n$, 
we may assume $-\frac{4\beta^*_1 -2}{8\alpha^*_1+1} < 2n-1$. 
Since $8\alpha^*_1 + 1>0$, 
$h_B(l)$ is a strictly increasing function on the interval
$[\max \{1, -\frac{4\beta^*_1 -2}{8\alpha^*_1+1}\}, 2n-1]$
(which is contained in $[1, 2n-1]$). 
Then there exists an odd integer $l_0$ such that 
\[
d_-\left[
\sum_{\substack{1 \le l \le 2n-1\\ l \colon odd}} g\left(2(\frac{n-1}{2}), \frac{n-1}{2}, l; q\right)\right]
= - \frac{5}{4}n^2 - \frac{1}{2}n + \frac{5}{8} + h_B(l_0)
= - \frac{5}{4}n^2 - \frac{1}{2}n + c_+, 
\]
where $c_+ =  \frac{5}{8} + h_B(l_0)$. 

(Note: $l_0$ is unique unless $-\frac{4(\beta^*_1 -\frac12)}{8\alpha^*_1+1}$ is an even integer in the interval $[2,2n-2]$.)
\medskip

\noindent
\textbf{Case (4--B--odd--(ii))}
Assume that $8\alpha^*_1 + 1 = 0$.   We then consider the sign of $\beta^*_1 -\frac{1}{2}$.

\medskip

If $\beta^*_1 -\frac{1}{2}= 0$, 
then $h_B(l)$ is the constant $\gamma^*_1$. 
Thus the minimum degree of $g(2(\frac{n-1}{2}), \frac{n-1}{2}, l; q)$ is the constant 
$-\frac{5}{4}n^2 - \frac{1}{2}n + \frac{5}{8} + \gamma^*_1$ independent of $l$.
So among the sum 
$\displaystyle \sum_{\substack{1 \le l \le 2n-1\\ l \colon odd}} g\left(2(\frac{n-1}{2}), \frac{n-1}{2}, l; q\right)$, 
these minimum degree terms may cancel out one another. 
Hence we only have the following bound from the below: 
\begin{align*}
d_-\left[
\sum_{\substack{1 \le l \le 2n-1\\ l \colon odd}} g\left(2(\frac{n-1}{2}), \frac{n-1}{2}, l; q\right)\right]
&\ge -\frac{5}{4}n^2 - \frac{1}{2}n + \frac{5}{8} + \gamma^*_1\\
&= -\frac{5}{4}n^2 - \frac{1}{2}n + d, 
\end{align*}
where $d = \frac{5}{8} + \gamma^*_1$. 

\medskip

If $\beta^*_1 - \frac{1}{2}> 0$, 
then $h_B(l) = (\beta^*_1 - \frac{1}{2})l + \gamma^*_1$ is strictly increasing on $[1, 2n-1]$. 
Then $h_B(l)$ is uniquely minimized at $l = 1$, 
and we have 
\begin{align*}
d_-\left[
\sum_{\substack{1 \le l \le 2n-1\\ l \colon odd}} g\left(2(\frac{n-1}{2}), \frac{n-1}{2}, l; q\right)\right]
&= h_B(1) -\frac{5}{4}n^2 - \frac{1}{2}n + \frac{5}{8}\\
&= -\frac{5}{4}n^2 - \frac{1}{2}n + \beta^*_1 + \gamma^*_1 +\frac{1}{8}. 
\end{align*}

\medskip

If $\beta^*_1 - \frac{1}{2}< 0$, 
then $h_B(l) = (\beta^*_1 - \frac{1}{2})l + \gamma^*_1$ is strictly decreasing on $[1, 2n-1]$. 
Therefore, 
$h_B(l)$ is uniquely minimized at $l = 2n-1$, 
and we have 
\begin{align*}
d_-\left[
\sum_{\substack{1 \le l \le 2n-1\\ l \colon odd}} g\left(2(\frac{n-1}{2}), \frac{n-1}{2}, l; q\right)\right]
&= h_B(2n-1) -\frac{5}{4}n^2 - \frac{1}{2}n + \frac{5}{8}\\
&= -\frac{5}{4}n^2 -\frac{1}{2}n + (2\beta^*_1 - 1)n - \beta^*_1 + \gamma^*_1 + \frac{9}{8}.
\end{align*}

\medskip

\noindent
\textbf{Case (4--B--odd--(iii))}
Assume that $8\alpha^*_1 + 1 < 0$. 
For sufficiently large $n$ 
we may assume $-\frac{2(2\beta^*_1 -1)}{8\alpha^*_1+1} < \frac{2n-1}{2}$.
Since $h_B(l)$ is strictly decreasing on $[-\frac{2(2\beta^*_1 -1)}{8\alpha^*_1+1},\ 2n-1]$,\  
$h_B(l)$ is uniquely minimized at $l = 2n-1$, 
and we have 
\begin{align*}
d_-\left[
\sum_{\substack{1 \le l \le 2n-1\\ l \colon odd}} g\left(2(\frac{n-1}{2}), \frac{n-1}{2}, l; q\right)\right]
&= h_B(2n-1) -\frac{5}{4}n^2 - \frac{1}{2}n + \frac{5}{8}\\
&= (4\alpha^*_1 - \frac{3}{4})n^2 + (-4\alpha^*_1 + 2\beta^*_1 -2)n + \alpha^*_1 - \beta^*_1 + \gamma^*_1 + \frac{5}{4}.  
\end{align*} 

Summarizing, since the assumption $k \leq \frac{n}{2}$ of Case (4--B) reduces to $k=\frac{n}{2}$ when $n$ is even and is thus covered by Case (4--A),   Case (4--B--odd) where $n$ is odd covers the minimization of $d_-[g(j,k,l;q)]$ when $k \leq \frac{n}{2}$.  This is summarized below.  Compare with Claim~\ref{4A_claim} for the case that $k \geq \frac{n}{2}$.

\begin{claim}[$k < \frac{n}{2}$, $n$ odd]
\label{4B_claim}
For sufficiently large $n$,
\begin{multline*}
  d_-\left[
\sum_{\substack{k < \frac{n}{2}\\ |2k-n| \le l \le 2k+n\\ n + l \colon even}} g(n, k, l; q)\right] \\
=
\begin{cases}
-\frac{5}{4}n^2 - \frac{1}{2}n + c_+ & \mathrm{if}\ 8\alpha^*_1 + 1 > 0, \\
-\frac{5}{4}n^2 - \frac{1}{2}n + \beta^*_1 + \gamma^*_1 + \frac{1}{8} & \mathrm{if}\ 8\alpha^*_1 + 1 = 0,\ 2\beta^*_1 -1> 0  \\
-\frac{5}{4}n^2 - \frac{1}{2}n + (2\beta^*_1 - 1)n - \beta^*_1 + \gamma^*_1 + \frac{9}{8} & \mathrm{if}\ 8\alpha^*_1+ 1 = 0,\ 2\beta^*_1 -1< 0  \\
(4\alpha^*_1 - \frac{3}{4})n^2 + (-4\alpha^*_1 + 2\beta^*_1 -2)n + \alpha^*_1 - \beta^*_1 + \gamma^*_1 + \frac{5}{4} & \mathrm{if}\ 8\alpha^*_1 + 1 < 0, 
\end{cases}
\end{multline*} 
and if $8\alpha^*_i + 1 = 0,\ 2\beta^*_1 -1 = 0$, 
then we have 
\[
d_-\left[
\sum_{\substack{k < \frac{n}{2}\\ |2k-n| \le l \le 2k+n\\ n + l \colon even}} g(n, k, l; q)\right] 
\ge -\frac{5}{4}n^2 - \frac{1}{2}n + d, 
\]
where $c_+$ and $d$ are constants depending only on $K$. 
\end{claim}

\begin{remark}\label{rem:4Bquadquasi}
Similar to Remark~\ref{rem:4Aquadquasi}, parts of Cases (4--B--odd--(i),(ii)) rely upon the assumption that $d_-[J'_{K,l}(q)]=\delta'^*_K(l)$ for all $l \geq 1$.  Without the assumption, we could only obtain the equation of Claim~\ref{4B_claim} under the assumptions that $8\alpha^*_1 + 1 = 0, 2 \beta^*_1 -1<0$ for part of Cases (4--B--odd(ii)) or that $8\alpha^*_1 + 1 < 0$ for Case (4--B--odd(iii)).
\end{remark}

\begin{figure}[!ht]
\includegraphics[width=0.9\linewidth]{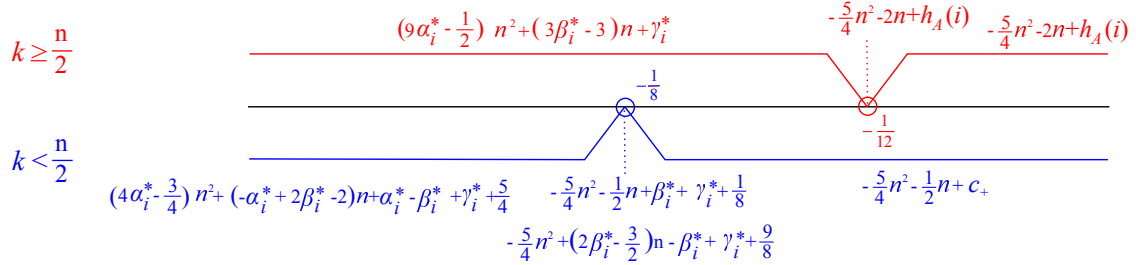}
\caption{$d_-[g(2k, k, l; q)]$ is minimized at the integer $k =\frac{n-1}{2}$. }
\label{AB}
\end{figure}

For sufficiently large $n$, 
we have that
$-\frac{5}{4}n^2 - 2n < -\frac{5}{4}n^2 - \frac{1}{2}n + c_+$. 
Furthermore, 
$12\alpha^*_i + 1 <0$ implies that both $9\alpha^*_i - \frac{1}{2} < -\frac{5}{4}$ and 
$9\alpha^*_i - \frac{1}{2} < 4\alpha^*_i - \frac{3}{4}$. 
Thus, since $8\alpha^*_1+1  \leq 0$ implies $12\alpha^*_1+1  \leq  -\frac{1}{2} < 0$, Claims~\ref{4A_claim} and \ref{4B_claim} give:

\[
d_{-}\left[\sum_{\substack{j,k=0 \\ j\le 2k}}^n \sum_{\substack{|2k-n| \le l \le 2k+n \\ n+l:even}}g(j,k,l;q)\right]
= 
\begin{cases}
-\frac{5}{4}n^2 - 2n + h_A(i) & \mathrm{if}\ 12\alpha^*_i + 1 > 0, \\
-\frac{5}{4}n^2 - 2n + h_A(i) & \mathrm{if}\ 12\alpha^*_i + 1 = 0, \\
(9\alpha^*_i - \frac{1}{2})n^2 + (3\beta^*_i -3) n + \gamma^*_i & \mathrm{if}\ 12\alpha^*_i + 1 < 0.
\end{cases}
\]

Since
\[
<n>J'_{M(K), n}(q) =\sum_{\substack{j,k=0 \\ j\le 2k}}^n \sum_{\substack{|2k-n| \le l \le 2k+n \\ n+l:even}}
g(j,k,l;q), 
\]
and $d_-[\langle n \rangle] = -\frac{1}{2}n$, 
we have 
\[
-\frac{1}{2}n + d_{-}[J'_{K, n}(q)] 
= 
d_{-}\left[\sum_{\substack{j,k=0 \\ j\le 2k}}^n \sum_{\substack{|2k-n| \le l \le 2k+n \\ n+l:even}}g(j,k,l;q)\right]. 
\]
Finally putting $C'_i = h_A(i)$ we obtain 
\begin{equation*}
  \delta'^*_{M(K)}(n)=
   \left\{ \begin{array}{ll} 
    -\frac{5}{4} n^2 - \frac{3}{2} n + C'_i&  (\alpha^*_i > -\frac{1}{12})\\
    -\frac{5}{4} n^2 - \frac{3}{2} n +C'_i &  (\alpha^*_i = -\frac{1}{12}) \\
     (9\alpha^*_i - \frac{1}{2})n^2 +(3\beta^*_i-\frac{5}{2}) n+ \gamma^*_i & (\alpha^*_i < -\frac{1}{12}), 
      \end{array} \right. 
\end{equation*}
where $C'_0 = 0$ and $C'_1 = \alpha^*_1 + \beta^*_1 + \gamma^*_1 - \frac{1}{4}$, as desired.

Moreover, due to Remarks~\ref{rem:4Aquadquasi} and \ref{rem:4Bquadquasi}, 
if we only have that $d_-[J'_{K,n}(q)]=\delta'^*_K(n)= \alpha^*_i n^2 + \beta^*_i n + \gamma^*_i$ for sufficiently large $n$ with 
$\alpha^*_i < -\frac{1}{12}$, then 
we may conclude that 
\[\delta'^*_{M(K)}(n)=(9\alpha^*_i - \frac{1}{2})n^2 +(3\beta^*_i-\frac{5}{2}) n+ \gamma^*_i. \]

\end{proof}

\bigskip

\subsection{The Slope Conjecture at the minimum degree of a Mazur double}

Theorem~\ref{strong_slope_conjecture_min_deg}(1) follows from 
Theorem~\ref{slope_conjecture_min_deg} below. 

\begin{theorem}
\label{slope_conjecture_min_deg}
Let $K$ be a knot. 
We assume that the period of $\delta^*_K(n)$ is less than or equal to  $2$ and that $b^*_i \ge 0$, 
if $a^*_i = 0$, then $b^*_i \ne 0$ and $a^*_1 + b^*_1 + c^*_1 \ge 0$. 
Furthermore, $d_{-}[J_{K, n}(q)] = \delta^*_K(n)$ for all $n \ge 2$. 

If $K$ satisfies the Slope Conjecture  for the minimum degree, 
then its Mazur double also satisfies the Slope Conjecture with the minimum degree. 
\end{theorem}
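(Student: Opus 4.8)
The plan is to follow the proof of Theorem~\ref{thm:SC_and_SSC_M}(1) in Section~\ref{sec:slope_conjecture_M}, but feeding in the minimum-degree data instead. First I would invoke Proposition~\ref{mindeg_Mazur} to read off the minimum-degree Jones slopes of $M(K)$: writing $a^*_{M,i}$ for the leading coefficient of $\delta^*_{M(K)}$, the proposition gives $a^*_{M,i} = -\tfrac54$ when $a^*_i \ge -\tfrac1{12}$ and $a^*_{M,i} = 9a^*_i - \tfrac12$ when $a^*_i < -\tfrac1{12}$, so the minimum-degree Jones slopes of $M(K)$ are exactly $-5$ (when $a^*_i \ge -\tfrac1{12}$) and $36a^*_i - 2$ (when $a^*_i < -\tfrac1{12}$). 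As in Section~\ref{sec:slope_conjecture_M} I would set up the two-bridge geometry so that $E(M(K)) = E(K) \cup_{T_K} X$, where $X = f(V - \mathrm{int}N(k_j))$ is the image under a faithful embedding $f$ of the exterior of the $[2,1,4]$ link, $T_K = \bdry E(K)$ and $T_M = \bdry N(M(K))$. It then suffices to produce, for each of these Jones slopes, an essential surface in $E(M(K))$ realizing it as a boundary slope on $T_M$.

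For the slope $-5$ (the case $a^*_i \ge -\tfrac1{12}$) I would use the surface $F_{\gamma_7}$ of Table~\ref{table:5/14paths} with weight $\beta = 0$. Taking $M(K) = f(k_1)$ (so $V = S^3 - \mathrm{int}N(k_2)$, $T_M = f(\bdry N(k_1))$, $T_K = f(\bdry V)$), the path $\gamma_7$ has boundary slope $-5$ on $k_1$ and, when $\beta = 0$, is disjoint from $k_2$. Hence $f(F_{\gamma_7})$ lies entirely in $X$, meets $T_M$ in a curve of slope $-5$, and is disjoint from $T_K$. Since $K$ is nontrivial, $T_K$ is incompressible in $E(M(K))$, so essentiality of $F_{\gamma_7}$ in the two-bridge exterior transfers to $f(F_{\gamma_7})$ in $E(M(K))$ by the innermost-disk argument of Section~\ref{sec:slope_conjecture_M}; if it is non-orientable I would replace it by the boundary of a twisted $I$-bundle (weights $2\alpha, 0$). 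This case uses no hypothesis on $K$: the slope $-5$ comes from the pattern alone, reflecting that the minimum-degree leading coefficient of $M(K)$ is $-\tfrac54$ independently of $a^*_i$ in this regime.

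For the slope $36a^*_i - 2$ (the case $a^*_i < -\tfrac1{12}$) I would instead take $M(K) = f(k_2)$, with $V = S^3 - \mathrm{int}N(k_1)$ and $(\mu_V,\lambda_V) = (\lambda_1,\mu_1)$, and use $F_{\gamma_5}$, whose boundary slopes are $\bigl(-3\tfrac\beta\alpha,\, -3\tfrac\alpha\beta - 2\bigr)$. Choosing $\tfrac\alpha\beta = -12a^*_i > 1$, the surface $S_i = f(F_{\gamma_5})$ meets $T_M = f(\bdry N(k_2))$ in slope $-3\tfrac\alpha\beta - 2 = 36a^*_i - 2$ and meets $T_K$ in the reciprocal slope $-\tfrac\alpha{3\beta} = 4a^*_i$, which is precisely the minimum-degree Jones slope of $K$. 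Since $K$ satisfies the minimum-degree Slope Conjecture, there is an essential surface $S_{K,i} \subset E(K)$ of boundary slope $4a^*_i$; gluing $n$ copies of $S_i$ to $m$ copies of $S_{K,i}$ along $T_K$ (with $m|\bdry S_{K,i}| = n|\bdry S_i \cap T_K|$) yields a surface $\tilde S_i$ of boundary slope $36a^*_i - 2$ on $T_M$, essential in $E(M(K))$ exactly as in the proof of Theorem~\ref{thm:SC_and_SSC_M}(1), again passing to a twisted $I$-bundle boundary should $\tilde S_i$ be non-orientable.

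The step I expect to be the main obstacle is the slope bookkeeping: correctly converting between the canonical two-bridge framings tabulated in Table~\ref{table:5/14paths} and the preferred framings of $K$ and $M(K)$ (in particular the reciprocal-slope effect of $(\mu_V,\lambda_V) = (\lambda_1,\mu_1)$ on $T_K$), and pairing the correct edge path ($\gamma_7$ versus $\gamma_5$) and component labelling with each regime of $a^*_i$. The genuinely new point, rather than a transcription of the maximum-degree argument, is recognizing that the threshold $a^*_i = -\tfrac1{12}$ separates a regime in which the Jones slope is the pattern-intrinsic value $-5$, realized by $\gamma_7$ with no input from $K$, from a regime in which it is $36a^*_i - 2$, realized by $\gamma_5$ glued to a minimum-degree Jones surface of $K$. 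Given the incompressibility of $T_K$, verifying essentiality of $f(F_{\gamma_7})$ and of the orientable doubles is then routine.
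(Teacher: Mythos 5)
Your proposal is correct and follows essentially the same route as the paper: the same case split at $a^*_i = -\tfrac{1}{12}$ from Proposition~\ref{mindeg_Mazur}, the surface $F_{\gamma_7}$ with $\beta = 0$ (disjoint from $T_K$) realizing the pattern-intrinsic slope $-5$, and $F_{\gamma_5}$ with $\tfrac{\alpha}{\beta} = -12a^*_i$ glued along $T_K$ to copies of a minimum-degree Jones surface of $K$ realizing $36a^*_i - 2$, including the reciprocal-slope conversion on $\partial V$ and the twisted $I$--bundle replacement in the non-orientable case. The only cosmetic difference is that the paper fixes $(\alpha,\beta)=(2,0)$ for $\gamma_7$ to guarantee orientability outright, whereas you defer to the $I$--bundle trick; both are fine.
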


\begin{proof}
Let $K$ be a knot such that $\delta^*_K(n)=a^*(n) n^2 +b^*(n) n+c^*(n)$ is quadratic quasi-polynomial of period $\le 2$ with 
$b^*(n) \ge 0$.  
Then Proposition~\ref{mindeg_Mazur} shows that the Jones slopes of $M(K)$ satisfy 
\[js_{M(K)} \subset \{ -5,\ 36 a^*(n) -2\}.\]

Let us find essential surfaces in $E(M(K))$ whose boundary slopes are these Jones slopes.

According to Proposition~\ref{mindeg_Mazur} 
we divide into two cases depending upon $a^*_i \ge - \frac{1}{12}$ or $a^*_i < - \frac{1}{12}$. 

\medskip

\noindent
\textbf{Case 1.\ $a^*_i \ge - \frac{1}{12}$.}\quad 
In this case Proposition~\ref{mindeg_Mazur} shows that the Jones slope is $-5$. 
Referring Table~\ref{table:5/14paths}, 
we take an (orientable) essential surface $F_{\gamma_7}$, with $\alpha = 2$ and $\beta = 0$,
in $S^3 - \mathrm{int}N(k_1 \cup k_2)$ described in Section~\ref{essential_surfaces}.   Let us denote this surface as $F_0$.
Then $F_0$ has a pair of boundary slopes 
$(-5, \emptyset)$ on $\partial N(k_1),\  \partial N(k_2)$, 
i.e.\ it has boundary slope $-5$ on $\partial N(k_1)$ and 
does not intersect $\partial N(k_2)$.  

Regard $S^3 - \mathrm{int}N(k_1 \cup k_2)$ as $V - \mathrm{int}N(k_1)$.  
Then $F_{0} \cap \partial V = \emptyset$ while $F_0$ has the boundary slope $-5$ on 
$\partial N(k_1)$. 

Next, let $S_0$ be its image $f(F_0)$ in $X = f(V - \mathrm{int}N(k_1)) = E(M(K))$, the exterior of the Mazur double of $K$. 
Then $S_0$ is an orientable essential surface in $X=E(M(K))$ which has the boundary slope $-5$ on $\partial N(M(K))$, and therefore it is the desired essential surface. 

\medskip

\noindent
\textbf{Case 2.\ $a^*_i < - \frac{1}{12}$.}\quad 
By the assumption $K$ satisfies the Slope Conjecture, 
hence the Jones slope $4a^*_i$ is realized by the boundary slope of an essential surface 
$S^*_{K, i} \subset E(K)$. 

\begin{claim}
\label{F^*_i_case1}
There exists an essential surface $F^*_i$ in $V - \mathrm{int}N(k_2)$ 
such that each component of $F^*_i \cap \partial V$ has slope $4a^*_i$ and 
each component of $F_i \cap \partial N(k_2)$ has $36a^*_i -2$.  
\end{claim}

\begin{proof}
Let us take an essential surface $F_{\gamma_5}$ in 
$S^3 - \mathrm{int}N(k_1 \cup k_2)$ 
described in Section~\ref{essential_surfaces}. 
Then it has a pair of boundary slopes 
$(-3\frac{\beta}{\alpha},\  -3\frac{\alpha}{\beta}-2)$ on $\partial N(k_1),\  \partial N(k_2)$, 
i.e.\ $F_{\gamma_5}$ has boundary slopes $-3\frac{\beta}{\alpha}$ on $\partial N(k_1)$ and 
$-3\frac{\alpha}{\beta}-2$ on $\partial N(k_2)$.  
 
Regard $S^3 - \mathrm{int}N(k_1 \cup k_2)$ as $V - \mathrm{int}N(k_2)$. 
Using the preferred meridian-longitude $(\mu_V, \lambda_V)$ instead of $(\mu_1, \lambda_1)$, 
$F_{\gamma_5} \cap \partial V$ has slope $-\frac{\alpha}{3\beta}$. 
Choose $\alpha, \beta$ ($\alpha \ge \beta$) so that 
$-\frac{\alpha}{3\beta} = 4a^*_i$, 
i.e. $\frac{\alpha}{\beta} = -12a^*_i > 1$. 
Let us denote $F_{\gamma_5}$ associated with $a^*_i$ by $F^*_i$.  
Then each component of $F^*_i \cap \partial V$ has slope $4a^*_i$,  
and each component of $F^*_i \cap \partial N(k_2)$ has slope $36a^*_i-2$ as desired. 
\end{proof}

\medskip

Next, let $S^*_i$ be the image $f(F^*_i)$ in $X = f(V - \mathrm{int}N(k_2))$.
As in the proof of Theorem~\ref{thm:SC_and_SSC_M}(1), 
we obtain the desired essential surface 
$\tilde{S}^*_i = m S^*_i \cup n S^*_{K,i}$ or the $\partial I$--subbundle of 
a twisted $I$--bundle of $\tilde{S}_i$ (when $\tilde{S}_i $ is non-orientable). 
\end{proof}

\subsection{The Strong Slope Conjecture at the minimum degree of a Mazur double}

Let us now turn to the Strong Slope Conjecture with the minimum degree. 
Recall that for a quadratic quasi-polynomial $\delta^*_K(n) = a^*(n)n^2 + b^*(n) n + c^*(n)$ with period $\le 2$, 
we put $a^*_i = a^*(i),\ b^*_i = b^*(i)$ and $c^*_i= c^*(i)$ for $i = 1, 2$.

\begin{theorem}
\label{strong_slope_conjecture_min_deg_SS*}
Let $K$ be a knot. 
We assume that the period of $\delta^*_K(n)$ is less than or equal to  $2$ and that $b^*_i \ge 0$, 
if $a^*_i = 0$, then $b^*_i \ne 0$ and $a^*_1 + b^*_1 + c^*_1 \ge 0$. 
Furthermore, $d_{-}[J_{K, n}(q)] = \delta^*_K(n)$ for all $n \ge 2$. 

If $K$ satisfies the Strong Slope Conjecture with $SS^*(i)$, 
then its Mazur double also satisfies the Strong Slope Conjecture with $SS^*(i)$. \end{theorem}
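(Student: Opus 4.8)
The plan is to prove this as the minimum-degree counterpart of Theorem~\ref{strong_slope_conjecture_SS(i)}, reusing the essential surfaces already built in the proof of Theorem~\ref{slope_conjecture_min_deg} and checking that their Euler characteristic data realize the linear term of $\delta^*_{M(K)}(n)$. First I would record from Proposition~\ref{mindeg_Mazur} that, in parity $i$, the Jones slope of $M(K)$ for the minimum degree is $-5$ when $a^*_i \ge -\frac{1}{12}$ (with $b^*_{M,i} = \frac{1}{2}$) and is $36a^*_i - 2$ when $a^*_i < -\frac{1}{12}$ (with $b^*_{M,i} = -12a^*_i + 3b^*_i - \frac{1}{2}$). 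It therefore suffices, in each regime, to exhibit an essential surface $\tilde{S}^*_i \subset E(M(K))$ of boundary slope $4a^*_{M,i}$ satisfying $2b^*_{M,i} = -\chi(\tilde{S}^*_i)/(|\bdry \tilde{S}^*_i|\,q^*)$, where $q^*$ is the denominator of the Jones slope in lowest terms. This splits exactly along the two cases of Theorem~\ref{slope_conjecture_min_deg}.

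In the case $a^*_i \ge -\frac{1}{12}$ the surface is already at hand: the proof of Theorem~\ref{slope_conjecture_min_deg} takes $S_0 = f(F_{\gamma_7})$ with $\alpha = 2$, $\beta = 0$, which lies entirely in $X = f(V - \mathrm{int}N(k_1))$, is disjoint from $T_K$, and has boundary slope $-5$ on $\partial N(M(K))$, so $q^* = 1$ and no gluing with a surface in $E(K)$ is needed. From Table~\ref{table:5/14paths}, $F_{\gamma_7}$ with these weights has $\chi = -\alpha = -2$ and exactly $\alpha = 2$ boundary components, whence $-\chi(S_0)/(|\bdry S_0|\cdot 1) = 1 = 2b^*_{M,i}$, as required.

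In the case $a^*_i < -\frac{1}{12}$ I would follow the gluing construction of Theorem~\ref{strong_slope_conjecture_SS(i)} Case~2 with $F_{\gamma_5}$ in place of $F_{\gamma_1}$. Writing $a^*_i = r^*_i/s^*_i$ in lowest terms (so $r^*_i < 0$, $s^*_i > 0$), I take $\beta = 2s^*_i$ and $\alpha = -24r^*_i$, which are orientable weights with $\alpha > \beta > 0$ precisely because $a^*_i < -\frac{1}{12}$, arranged so that $F_{\gamma_5}$ has slope $4a^*_i$ on $T_K$ and slope $36a^*_i - 2$ on $T_M$. Since $36a^*_i - 2$ differs from $36a^*_i$ by an integer, its denominator in lowest terms is still $q^* = s^*_i/\gcd(36, s^*_i)$. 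Reading Table~\ref{table:5/14paths} gives $\chi(S^*_i) = -2\alpha + \beta = 48r^*_i + 2s^*_i$, $|\bdry S^*_i \cap T_K| = \gcd(3\beta, \alpha) = 6\gcd(4, s^*_i)$, and $|\bdry S^*_i \cap T_M| = \gcd(3\alpha, \beta) = 2\gcd(36, s^*_i)$. Taking the Jones surface $S^*_{K,i}$ supplied by $SS^*(i)$ for $K$, for which $-\chi(S^*_{K,i})/(|\bdry S^*_{K,i}|\cdot s^*_i/\gcd(4, s^*_i)) = 2b^*_i$, I glue $\tilde{S}^*_i = m S^*_{K,i} \cup n S^*_i$ along $T_K$ with $m|\bdry S^*_{K,i}| = n|\bdry S^*_i \cap T_K|$. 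Substituting the data and the gluing relation (so that $m\chi(S^*_{K,i}) = -12 b^*_i n s^*_i$ and $|\bdry \tilde{S}^*_i| = 2n\gcd(36,s^*_i)$) yields, after the same simplification as in Theorem~\ref{strong_slope_conjecture_SS(i)}, the value $-\chi(\tilde{S}^*_i)/(|\bdry \tilde{S}^*_i|\,q^*) = 6b^*_i - 24a^*_i - 1 = 2b^*_{M,i}$.

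Finally, exactly as in the earlier proofs, if $\tilde{S}^*_i$ is non-orientable I would replace it by the $\bdry I$-subbundle of its twisted $I$-bundle neighborhood; by \cite[Lemma 5.1]{BMT_tgW} this preserves both the boundary slope and the ratio $\chi/(|\bdry|\,q^*)$, and incompressibility and boundary-incompressibility follow as in the proof of Theorem~\ref{thm:SC_and_SSC_M}(1). The main obstacle is the bookkeeping in the case $a^*_i < -\frac{1}{12}$: one must verify that the integer shift $-2$ in the slope $36a^*_i - 2$ leaves the denominator $q^*$ equal to $s^*_i/\gcd(36, s^*_i)$, and carefully track the extra minus sign in the $SS^*$-condition together with the $\gcd$ computations so that the final linear term matches $2b^*_{M,i}$ from Proposition~\ref{mindeg_Mazur}.
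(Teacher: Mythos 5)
Your proposal is correct and follows essentially the same route as the paper's proof: the same two cases split at $a^*_i = -\frac{1}{12}$, the same surfaces ($F_{\gamma_7}$ with $\alpha=2$, $\beta=0$ in the first case, and $F_{\gamma_5}$ with $\beta = 2s_i$, $\alpha = -24r_i$ glued to copies of the Jones surface $S^*_{K,i}$ in the second), the same denominator bookkeeping $q^* = s_i/\gcd(36,s_i)$ via $\gcd(36r_i - 2s_i, s_i) = \gcd(36,s_i)$, and the same non-orientability fix using the $\partial I$--subbundle and \cite[Lemma 5.1]{BMT_tgW}. Your final arithmetic $-\chi(\tilde{S}^*_i)/(|\bdry \tilde{S}^*_i|\,q^*) = 6b^*_i - 24a^*_i - 1 = 2b^*_{M,i}$ agrees exactly with the paper's computation.
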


\begin{proof}
Write $\delta^*_{M(K)}(n)=a^*_M(n)n^2 + b^*_M(n)n + c^*_M(n)$.   
From Proposition~\ref{mindeg_Mazur}, 
we may also write 
$a^*_{M,i} = a^*_M(i), b^*_{M,i} = b^*_M(i),  c^*_{M,i} = c^*_M(i)$ for $i = 1, 2$. 
\medskip

\noindent
\textbf{Case 1.\ $a^*_i \ge -\frac{1}{12}$.}\quad 
Following the proof of Theorem~\ref{slope_conjecture_min_deg}, 
we take $S^*_i$ as the image of $F_0$ which is $F_{\gamma_7}$ (with $\alpha = 2$ and $\beta = 0$) 
in $S^3 - \mathrm{int}N(k_1 \cup k_2)$ described in Section~\ref{essential_surfaces}. 
Recall that $S^*_i$ does not intersect $f(\partial V) = T_K$. 
Precisely, $S^*_i$ is a twice punctured torus whose boundary slope on $\partial E(M(K))$ is $-5$. 
So $\chi(S_i) = -2$ and thus we have
\[
\frac{\chi(S^*_i)}{|\partial S^*_i|\cdot 1} = \frac{-2}{2} = -2 \frac{1}{2} = -2b^*_{M, i}.
\]
Hence $S^*_i$ is the desired Jones surface. 

\medskip

\noindent
\textbf{Case 2.\ $a^*_i < -\frac{1}{12}$.}\quad 
Write $a^*_i = r_i / s_i$ for some coprime integers $r_i, s_i$, where $s_i > 0$. 
Then, as a ratio of coprime integers, 
the denominator of $4a^*_i$ is $s_i/\mathrm{gcd}(4, s_i)$. 
Since $K$ satisfies the Strong Slope Conjecture with $SS^*(i)$, 
there is a properly embedded essential surface $S^*_{K, i}$ 
in the exterior of $K$ whose boundary slope is $4a^*_i$ and 
\[\frac{\chi(S^*_{K,i})}{|\partial S^*_{K,i}| \cdot \frac{s_i}{\gcd(4,s_i)}} = -2 b^*_i. \]

We show that an essential surface $\tilde{S^*}_i$ in $E(M(K))$ given in the proof of Theorem~\ref{thm:SC_and_SSC_M}(1) 
satisfies the condition of the Strong Slope Conjecture for minimum degree: 
\[\widetilde{S^*}_i\ \textrm{has boundary slope}\ p/q = 4a^*_{M,i} \quad 
\mbox{and} \quad \frac{\chi(\widetilde{S^*}_i)}{|\partial \widetilde{S^*}_i| q} = -2b^*_{M,i}.\]

When addressing the Slope Conjecture with the minimum degree for $M(K)$ in this case, 
we constructed a properly embedded essential surface 
$\widetilde{S^*}_i = m S^*_{K,i} \cup n S^*_i$ 
in the exterior of $M(K)$ 
by joining $m$ copies of $S^*_{K,i}$ in $E(K)$ to $n$ copies of the surface $S^*_i$  in 
$f(V) - N(M(K))$.  
This requires that 
\[ m |\bdry S^*_{K,i}| = n |\bdry S^*_i \cap T_K|. \]

Regard $S^3 - \mathrm{int}N(k_1 \cup k_2)$ as $V - \mathrm{int}N(k_2)$. 
Recall that the surface $S^*_i$ is identified with a surface of type $F_{\gamma_5}$ in the exterior of the $[2, 1, 4]$ two-bridge link by the embedding $f \colon V \to S^3$. 
$F_{\gamma_5} \cap \partial V$ has slope $-\frac{\alpha}{3\beta}$ with respect to the preferred meridian-longitude $(\mu_V, \lambda_V)$. 
Choose $\alpha, \beta$ ($\alpha \ge \beta$) as 
$\frac{\alpha}{\beta} = -12a^*_i = -\frac{12r_i}{s_i} > 1$ 
so that $S^*_i$ has boundary slope $4a^*_i$ on $\partial V = T_K$.  
Note that $S^*_i$ has boundary slope 
$-3\frac{\alpha}{\beta} -2 = 36a^*_i-2$ on $\partial N(M(K)) = T_M$. 
For convenience, we choose $\beta = 2s_i$, $\alpha = -24r_i$ 
so that $F_{\gamma_5} = F_{\gamma_5, \alpha, \beta}$ is orientable; see Subsection~\ref{algorithm}. 
 
Then, using Table~\ref{table:5/14paths},
we calculate the following:
\begin{itemize}
\item 
$\chi(S^*_i) 
= -2\alpha + \beta = 48r_i + 2s_i$,

\item  
slope of $\partial S^*_i$  on $T_M$ is 
$-3\frac{\alpha}{\beta} -2
= -3(-\frac{12r_i}{s_i}) -2
= 36 \frac {r_i}{s_i} -2
= \frac{36r_i - 2s_i}{s_i}$,

\item $|\partial S^*_i \cap T_K| 
= \gcd(3\beta, \alpha) 
= \gcd(6s_i,\ -24 r_i) 
= 6 \gcd(4,\ s_i)$, and

\item $|\partial S^*_i \cap T_M| 
= \gcd(3\alpha, \beta) 
= \gcd(-72 r_i,\ 2s_i) 
= 2\gcd(36,\ s_i)$.
\end{itemize}

The boundary of $\tilde{S^*}_i$ consists of $n$ copies of the boundary of $S^*_i$  on $T_M$, 
so we have

\begin{itemize}
\item 
$|\partial \tilde{S^*}_i| = n | \partial S^*_i \cap T_M | = 2n \gcd(36,\ s_i)$. 
\end{itemize}

Moreover, the boundary slope of $\tilde{S^*}_i$ is the slope of $\partial S^*_i$ on $T_M$, 
and so this has denominator $\frac{s_i}{\gcd(36r_i - 2s_i, s_i)} = \frac{s_i}{\gcd(36,s_i)}$.  
We may now calculate
\begin{align*}
  \frac{\chi(\widetilde{S^*}_i)}{|\partial \widetilde{S^*}_i| \cdot \frac{s_i}{\gcd(36, s_i)}} 
  &= \frac{m \chi(S^*_{K, i}) + n \chi(S^*_i)}{ 2n \gcd(36,s_i) \cdot \frac{s_i}{\gcd(36,s_i)}} \\
  &= \frac{-2 b^*_i m |\partial S^*_{K, i}| \cdot \frac{s_i}{\gcd(4,s_i)} + n(48r_i + 2s_i)}{2n s_i} \\
  &= \frac{-12 b^*_i n \gcd(4, s_i) \cdot \frac{s_i}{\gcd(4, s_i)} + n(48r_i + 2s_i)}{2n s_i} \\
  &= \frac{-12 b^*_i n s_i + n(48r_i + 2s_i)}{2n s_i}\\
  &= -6 b^*_i + 24r_i/s_i +1\\
  &= -2(-12a^*_i + 3b^*_i -\frac{1}{2}) = -2b^*_{M,i} 
\end{align*}
    as desired.

If the glued surface $\widetilde{S^*}_i = m S^*_{K,i} \cup n S^*_i$ is non-orientable, 
then as in the proof of Theorem~\ref{thm:SC_and_SSC_M}(1), 
we replace $\widetilde{S^*}_i$ by the frontier $\widetilde{S^*}_i'$ of the tubular neighborhood of $\widetilde{S^*}_i$, 
but \cite[Lemma 5.1]{BMT_tgW} shows that $\widetilde{S^*}_i'$ and $\widetilde{S^*}_i$ has the same boundary slope and 
$\displaystyle \frac{\chi(\widetilde{S^*}_i')}{|\partial\widetilde{S^*}_i'| \cdot \frac{s_i}{\gcd(36,s_i)}} 
= \frac{\chi(\widetilde{S^*}_i)}{|\partial \widetilde{S^*}_i| \cdot \frac{s}{\gcd(36,s_i)}}$. 
Thus the essential surface $\widetilde{S^*}_i$ or $\widetilde{S^*}_i'$ (when $\widetilde{S^*}_i$ is non-orientable) is the desired 
essential surface.
\end{proof}

\bigskip

\section{Crossing Number for Mazur doubles of knots}
\label{sec:crossing}

The \textit{crossing number} of $K$ is defined to be the minimal number of crossings in any of its diagrams, 
and denoted by $c(K)$. 
The crossing number is one of the most basic invariants of a knot. 
However, crossing number is notoriously intractable. 
For instance, for a satellite knot $K$ with a companion knot $k$, 
although it is conjectured that $c(K) \ge c(k)$ \cite{Kir}, it remains widely open. 
Important progress was made by Lackenby \cite{Lac_sate}, 
who proves that $\displaystyle c(K) \ge \frac{1}{10^{13}}c(k)$. 

Very recently Kalfagianni and Ruey Shan Lee \cite{K-RSL}  explicitly determine 
the crossing number of untwisted Whitehead doubles of a nontrivial adequate knots with trivial writhe. 
Precisely, they show that $c(W_{\pm 1}^0(K)) = 4c(K) + 1$ for any nontrivial adequate knot with $wr(K) = 0$.  
This is the first instance of results that determine the crossing numbers for broad families of prime satellite knots.

First we review notions of adequacy for knots.
Let $D$ be a diagram of a knot in $S^3$. 
A \textit{state} for $D$ is a choice of replacing every crossing of $D$ by \textit{$A$--resolution} or 
\textit{$B$--resolution} as in Figure~\ref{fig:resolution} with the dotted segment recording the location of 
the crossing before the replacement. 
The state $\sigma_{+}$ ( resp. $\sigma_{-}$ ) denotes the choice of $A$ (resp. $B$) 
-resolution at each crossing of $D$. 
Applying a state to $D$, we obtain a set of disjoint circles called \textit{state circles}. 
We form \textit{$\sigma$-state graph} $G_{\sigma}(D)$ for a state $\sigma$ 
by letting the resulting circles be vertices and the dotted segments be edges. 

\quad

\begin{figure}[H]
\centering 
\includegraphics[width=0.5\linewidth]{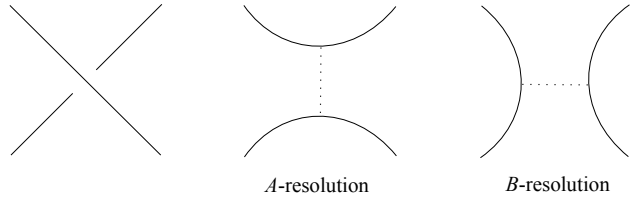}
\caption{$A$--resolution and $B$--resolution}
\label{fig:resolution}
\end{figure}

\quad

\begin{definition} 
A diagram $D$ is \textit{$A$--adequate} (resp. \textit{$B$--adequate}) 
if the graph $G_{\sigma_+}(D)$ ( resp. $G_{\sigma_-}(D)$ ) has no one-edged loop. 
If $D$ is both $A$--adequate and  $B$--adequate,  then $D$ is called \textit{adequate}.  
A knot is \textit{$A$--adequate} (resp. \textit{$B$--adequate}) 
if it has a $A$--adequate diagram (resp. $B$--adequate diagram).
A knot  is \textit{adequate} if it has an adequate diagram. 
\end{definition}

In this section we apply the argument in the proof of \cite{K-RSL} together with 
Propositions~\ref{maxdeg_Mazur} and \ref{mindeg_Mazur} to prove the following. 

\begin{thm_mazur_crossing}
Let $K$ be an adequate knot with crossing number $c(K)$ and the writhe $wr(K)$. 
Then we have the following. 
\begin{enumerate}
\item 
The Mazur double of $K$ is non-adequate. 
\item 
Assume that $c_+(K),\ c_-(K) \ne 0$. 
Then the crossing number of Mazur double of $K$ satisfies:
\[
9c(K) + 2 \le  c(M(K)) \le 9c(K)+3 + 6|wr(K)|.
\]
\item
If $wr(K) = 0$, 
then $c(M(K))$ is either $9c(K) + 2$ or $9c(K) + 3$. 
\end{enumerate}
\end{thm_mazur_crossing}

\subsection{Jones diameter of Mazur doubles}

Define $d[J_{K,n}(q)] = 4d_+[J_{K,n}(q)] - 4d_-[J_{K,n}(q)]$.
Then for suitably large $n$, $d[J_{K,n}(q)]  =4\delta_K(n) - 4\delta^*_K(n)$
Setting $\delta_K(n) = a(n) n^2 +b(n) n +c(n)$ and $\delta^*_K(n) = a^*(n) n^2 +b^*(n) n +c^*(n)$,
the {\em Jones diameter} of $K$ is defined to be $dj_K = \max \{ 4|a(n)-a^*(n)| \}$.

By the assumption of Theorem~\ref{crossing_Mazur} 
we may assume that 
\begin{itemize}
\item $a_i>0$, $a^*_i<-\frac{1}{12}$,
\item  $b_i \le 0$, $b^*_i \ge 0$,  and
\item $a^*_1+b^*_1 + c^*_1 \ge 0$
\end{itemize}

Then following Propositions~\ref{maxdeg_Mazur} and \ref{mindeg_Mazur}, 
\begin{align*}
  \delta_{M(K)}(n)&=            9a_i n^2 +(- 12a_i+3b_i -1) n+4a_i-2b_i+c_i+1 \\
 \delta^*_{M(K)}(n)&=    (9a^*_i -\frac{1}{2})n^2 +(- 12a^*_i+3b^*_i -\frac{1}{2}) n+4a^*_i-2b^*_i+c^*_i +\frac{1}{2}.
 \end{align*}
Our assumptions have that $a_i > 0 > a^*_i$, so $a_i-a^*_i >0$. 
Hence the sets of Jones slopes for maximum degree and minimum degree are given by 
\[
js_{M(K)} = \{ 36a_i \}\quad \mathrm{and}\quad js^*_{M(K)} = \{ 36a_i^*-2\}.
\]
Then we have the Jones diameter of $M(K)$ as follows. 
\[dj_{M(K)} = \max\{ | 36a_i - 36a^*_i + 2| \} = 9 \max\{ 4a_i - 4a^*_i \} + 2 = 9 dj_K + 2. \]

\subsection{Proof of Theorem~\ref{crossing_Mazur}}
It follows from \cite[Theorem 1.1]{K-RSL}  that 
\[
9 dj_K + 2 = dj_{M(K)} \le 2 c(M(K)).
\]

If $K$ is adequate, then $dj_K = 2c(K)$ and we obtain that 
\[
c(M(K)) \geq 9c(K) + 1.
\]

On the other hand, 
consider a reduced adequate diagram $D$ of $K$ which has crossing number 
$c(D)=c(K)$ with writhe $wr(D)$. 
Note that $wr(D)$ is minimal over all diagrams representing $K$. 
Thus it is invariant of $K$, and we may denote it by $wr(K)$. 

Adjust $D$ by $|wr(D)|$ Reidemeister I moves to obtain a diagram $D_0$ with trivial writhe, i.e.\ $wr(D_0)=0$.
Then the induced diagram $M(D_0)$ of $M(K)$ with blackboard framing has crossing number 
\[c(M(D_0)) = 9c(D_0)+3 = 9c(D)+3 + 9|wr(K)| = 9c(K)+3 + 9|wr(K)|.\]  
Each Reidemiester I curl of $D_0$ gives $9$ crossings of $M(D_0)$, 
but for each curl we may apply ``belt trick'' to reduce the $9$ crossings to $6$; see Figure~\ref{belt_trick}. 
\begin{figure}[!ht]
\includegraphics[width=0.35\linewidth]{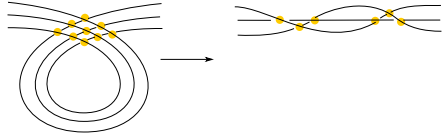}
\caption{Belt trick reduces the crossing number $9$ to $6$ for each curl of $D$. }
\label{belt_trick}
\end{figure}

Let us call the resulting diagram $M(D)$ so that 
\[c(M(D)) = 9c(K)+3 + 6|wr(D)|.\]
Hence, for an adequate knot $K$, 
we obtain the bound from the below and above
\[ 9c(K) + 1 \le  c(M(K)) \le 9c(K)+3 + 6|wr(K)|.\]

Let us exclude the possibility of 
$c(M(K)) = 9c(K) + 1$. 
We calculated that $dj_{M(K)} = 9 dj_K + 2$ so 
\[ dj_{M(K)} = 9 \cdot 2 c(K) + 2 = 2(9c(K)+1). \]
Since Lemma~\ref{MazurDoubleNotAadequate} below shows that $M(K)$ is not adequate, 
\cite[Theorem 1.1]{K-RSL} gives the strict inequality 
\[ c(M(K)) > \frac{dj_{M(K)}}{2}  = 9c(K)+1. \]
Hence we have
\[ 9c(K) + 2 \le  c(M(K)) \le 9c(K)+3 + 6|wr(K)|.\]
This establishes $(2)$ of Theorem~\ref{crossing_Mazur}. 
 
In particular if $wr(K) = 0$, 
then we have 
\[
c(M(K)) = 9c(K) + 2,\ \mathrm{or}\quad 9c(K)+3, 
\]
proving $(3)$ of Theorem~\ref{crossing_Mazur}.

So it remains to show that if $K$ is adequate, 
then its Mazur double is non-adequate ($(1)$ of Theorem~\ref{crossing_Mazur}).  
Actually, more strongly we prove:

\begin{lemma}
\label{MazurDoubleNotAadequate}
Let $K$ be a non-trivial A-adequate knot with crossing number $c(K)$ and writhe $wr(K)$. 
Then $M(K)$ is not A-adequate. 
In particular, 
if $K$ is adequate, then $M(K)$ is non-adequate. 
\end{lemma}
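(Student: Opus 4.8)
The plan is to argue by contradiction through the extreme coefficients of the colored Jones function, dualizing the leading--coefficient argument used in the proof of Proposition~\ref{Normalized Sign Condition}. For a Laurent polynomial $f$ let $\ell_-[f]$ denote the coefficient of its lowest degree term (the trailing coefficient, analogous to the $\ell_+$ of that proof). It is a standard consequence of $A$--adequacy (cf.\ \cite{FKP}) that if a knot $L$ is $A$--adequate then $|\ell_-[J'_{L,n}(q)]| = 1$ for every $n$. Hence it suffices to produce a single (large) $n$ with $|\ell_-[J'_{M(K),n}(q)]| \neq 1$: this is incompatible with $M(K)$ being $A$--adequate. Since $K$ is a non-trivial $A$--adequate knot it satisfies the hypotheses of Proposition~\ref{mindeg_Mazur_normalized} (see Remark~\ref{min_condition}), so the minimum--degree analysis already carried out there applies.

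First I would locate the trailing coefficient. By the uniqueness of the minimizing triple $(j_0,k_0,l_0)$ established in the proof of Proposition~\ref{mindeg_Mazur_normalized}, there is no cancellation among the lowest degree terms of the double sum of Proposition~\ref{CJP_M(K)}, so for suitably large $n$ one has $\ell_-[\langle n\rangle J'_{M(K),n}(q)] = \ell_-[g(j_0,k_0,l_0;q)]$, and therefore, since $\ell_-[\langle n\rangle] = \pm 1$, also $\ell_-[J'_{M(K),n}(q)] = \pm\,\ell_-[g(j_0,k_0,l_0;q)]$. Reading off the case analysis of that proof, the minimizing triple is $(n,\tfrac n2,0)$ or $(n,\tfrac{n+1}2,1)$ according to the parity of $n$ when $\alpha^*_i > -\tfrac1{12}$, and is $(n,n,3n)$ when $\alpha^*_i < -\tfrac1{12}$.

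Next I would compute $\ell_-[g(j_0,k_0,l_0;q)]$ using multiplicativity of $\ell_-$ over the product defining $g$ in Proposition~\ref{CJP_M(K)}. Every factor of $g$ except the two tetrahedral symbols $\mathrm{Tet}_1 = \left\langle\begin{array}{ccc} 2k_0 & 2k_0 & 2j_0\\ n & n & n\end{array}\right\rangle$, $\mathrm{Tet}_2 = \left\langle\begin{array}{ccc} 2k_0 & 2k_0 & 2j_0\\ n & n & l_0\end{array}\right\rangle$ and the knot factor $J'_{K,l_0}(q)$ is either a monomial (the $\delta$'s and $q^{-\frac34 n(n+2)}$) or a ratio of products of quantum integers (the $\langle\cdot\rangle$ and $\langle\cdot,\cdot,\cdot\rangle$), all of which have trailing coefficient $\pm 1$. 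Moreover $\ell_-[J'_{K,l_0}(q)] = \pm 1$: trivially when $l_0\in\{0,1\}$, since $J'_{K,0}=1$ and $J'_{K,1}$ is the Jones polynomial of the $A$--adequate knot $K$, and by $A$--adequacy of $K$ when $l_0 = 3n$. Thus $\ell_-[g(j_0,k_0,l_0;q)] = \pm\,\ell_-[\mathrm{Tet}_1]\,\ell_-[\mathrm{Tet}_2]$, and the entire question reduces to the trailing coefficients of these tetrahedral symbols.

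The crux, and the main obstacle, is to show that $\ell_-[\mathrm{Tet}_1]\,\ell_-[\mathrm{Tet}_2] \neq \pm 1$. Unlike the quantum integers, a tetrahedral symbol is given by the alternating sum (\ref{tetsymb}) and can have an extreme coefficient that is not a unit. Using Lemma~\ref{deglem2} to pin down the minimal $q$--degree, I would identify exactly which summands $s$ in (\ref{tetsymb}) attain this degree and add their leading contributions; the assertion is that for the non-degenerate symbol(s) occurring above, this total is an integer of absolute value at least $2$ (a small quantum--integer value at the extremal configuration rather than a single $\pm 1$), while any $\mathrm{Tet}$ having a vanishing entry degenerates to a ratio of factorials and contributes only $\pm 1$. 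This yields $|\ell_-[J'_{M(K),n}(q)]|\neq 1$, contradicting the consequence of $A$--adequacy from the first paragraph, so $M(K)$ is not $A$--adequate. The final statement follows because an adequate knot is in particular $A$--adequate, and a knot that is not $A$--adequate is not adequate.
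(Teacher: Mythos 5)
Your reduction is set up correctly as far as it goes: the minimizing triples you read off from the proof of Proposition~\ref{mindeg_Mazur_normalized} are the right ones, the no-cancellation claim does localize the trailing term at a single $g(j_0,k_0,l_0;q)$, the trailing coefficient $\ell_-$ is multiplicative over the factors of $g$, and $A$--adequacy of $K$ does give $\ell_-[J'_{K,l_0}(q)]=\pm 1$. The gap is exactly the step you flag as the crux, and unfortunately the assertion there is not just unproven but \emph{false}: at each of the minimizing triples the two tetrahedral symbols themselves collapse to one-term sums, so their extreme coefficients are also $\pm 1$. Concretely, for $(j_0,k_0,l_0)=(n,n,3n)$ the symbol $\left\langle\begin{smallmatrix} 2n & 2n & 2n\\ n & n & n\end{smallmatrix}\right\rangle$ has $(a_1,a_2,a_3,a_4)=(3n,2n,2n,2n)$ and $(b_1,b_2,b_3)=(3n,3n,3n)$, while $\left\langle\begin{smallmatrix} 2n & 2n & 2n\\ n & n & 3n\end{smallmatrix}\right\rangle$ has $(a_1,a_2,a_3,a_4)=(3n,3n,2n,3n)$ and $(b_1,b_2,b_3)=(4n,3n,4n)$; in both cases $\max_j a_j=\min_i b_i=3n$, so the sum in (\ref{tetsymb}) consists of the single summand $s=3n$ and each symbol is a ratio of quantum factorials, whose extreme coefficients are $\pm 1$. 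The same collapse occurs for $(n,\tfrac n2,0)$ (there $\max_j a_j=\min_i b_i=2n$ for both symbols) and for $(n,\tfrac{n+1}{2},1)$ (there $\max_j a_j=\min_i b_i=2n+1$). Consequently $\ell_-[J'_{M(K),n}(q)]=\pm 1$ for all suitably large $n$ whenever $K$ is $A$--adequate: the obstruction you propose is identically trivial, and no choice of $n$ can produce the contradiction, so this strategy cannot be repaired by a more careful computation of the same quantity.

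For contrast, the paper's proof extracts a different consequence of $A$--adequacy: by \cite[Lemma~3.6]{KT}, for an $A$--adequate knot the minimum degree is an explicit quadratic polynomial valid for \emph{all} $n\ge 1$, so (since $J_{K,1}=1$) its coefficients satisfy $a^*+b^*+c^*=0$, together with $2a^*\in\Z$ and $b^*\ge 0$. If $M(K)$ were $A$--adequate, the coefficients of $d_-[J_{M(K),n}(q)]$ would likewise sum to $0$; but Proposition~\ref{mindeg_Mazur} shows that this sum is $a^*+b^*+c^*$ shifted by a nonzero constant --- a half-integer when $c_-(D)>0$ (i.e.\ $a^*\le-\tfrac12$), and either $-\tfrac12$ or $b^*-\tfrac34$ (never an integer) when $c_-(D)=0$ --- which is the desired contradiction. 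The moral is that the failure of $A$--adequacy for $M(K)$ is detected by the \emph{degree} data (integrality and the vanishing coefficient sum), not by unit-ness of extreme coefficients, which Mazur doubling preserves.
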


\begin{proof}
Let $D$ be a reduced A-adequate diagram of an A-adequate knot $K$.
Then \cite[Lemma~3.6]{KT} (see also \cite[Theorem~3.2]{Ka}) shows that 
\begin{align*}
d_-[J_K(n)] &= a^*n^2 + b^*n + c^* \\
  &= -\frac{c_-(D)}{2}n^2 + \left(\frac{c(D)-v_A(D)}{2}\right)n + \frac{v_A(D)-c_+(D)}{2}.
\end{align*}
It follows that $a^* + b^* + c^* = 0$, $2a^*$ is an integer, and also that $b^* \ge 0$.  
This last inequlaity follows from $-2b^* = v_A(D)-c(D)$ being the Euler characteristic of the associated A-state surface. 
Because $K$ is a non-trivial knot, this A-state surface cannot be a disk.
Therefore we may apply Theorem~\ref{mindeg_Mazur} to obtain $\delta^*_{M(K)}(n)$.

Now suppose $M(K)$ is also A-adequate.  
Then we would have $d_-[J_{M(K)}(n)] = \delta^*_{M(K)}(n)$ for all $n \geq 1$ (by \cite[Lemma~3.6]{KT}) and the sum of its coefficents would equal $0$.
When $c_-(D)>0$ so that $a^* \leq -\frac12$, 
then Theorem~\ref{mindeg_Mazur} gives
\[ d_-[J_{M(K)}(n)] = \left(9a^* -\frac{1}{2}\right)n^2 + \left(-12a^* + 3b^* - \frac{1}{2}\right)n + \left(4a^* -2b^* +c^*+\frac{1}{2} \right)\]
from which 
 one readily observes that the sum of the coefficients is
\[
\left(9a^* -\frac{1}{2}\right)+ \left(-12a^* + 3b^* - \frac{1}{2}\right) + \left(4a^* -2b^* +c^*+\frac{1}{2} \right) = a^* + b^* + c^* + \frac12 = \frac12.
\]
Of course this is not $0$, contradicting the assumption that $M(K)$ was A-adequate.  

If $c_-(D)=0$ so that $a^* = 0$, then Theorem~\ref{mindeg_Mazur} gives
\[ d_-[J_{M(K)}(n)] = -\frac54 n^2 + \frac12 n - \frac12 + C\]
where $C$ is either $\frac34$ or $4a^* + 2b^* + c^* + \frac12 = b^* + \frac12$.
In each case the sum of the coefficients is either $-\frac12$ or $-\frac34 + b^* = -\frac34 + \frac{c(D)-v_A}{2}$, respectively.  
Neither is an integer, so this sum cannot be $0$.  
Again, this contradicts the assumption that $M(K)$ was A-adequate.
\end{proof}

This completes a proof of Theorem~\ref{crossing_Mazur}. 
\hspace*{\fill} $\square$

\medskip

Lemma~\ref{MazurDoubleNotAadequate} shows that 
if $K$ is a non-trivial A-adequate knot, then its Mazur double is not A-adequate. 
However, for B-adequacy we observe the following. 

\begin{addendum}
\label{B_adequate_Mazur_B_adequate}
Assume that $K$ is B-adequate with B-adequate diagram $D$. 
If $wr(D) = 0$, 
then 
the diagram of $M(K)$ obtained from $D$ depicted in Figure~\ref{B_adequate_Mazur_double} is also $B$-adequate. 
\end{addendum}

\begin{proof}
Denote by $D_-(K)$ a diagram obtained from $D(K)$ by applying $B$--resolution at each crossing.
Then we obtain diagram Figure~\ref{B_adequate_Mazur_double} (Right), 
where $\mathcal{D}^3(K)$ denotes a diagram obtained from $\mathcal{D}(K)$ by replacing each of its component by $3$ parallels and $\mathcal{D}^3_-(K)$ denotes a diagram obtained from $\mathcal{D}^3(K)$ by applying $B$--resolution at each crossing. 

Since $\mathcal{D}(K)$ is $B$--adequate, 
\cite[Lemma 2.17]{Le-lec} implies that $\mathcal{D}^3(K)$ is also $B$--adequate, 
and hence the $\sigma_{-}$--state graph associated to $\mathcal{D}_{-}^3(K)$ has no loop edge. 
(In \cite{Le-lec} Le uses the terminology \textit{plus}-adequate (resp.\ \textit{minus}-adequate) to mean $A$--adequate 
(resp. $B$--adequate).)
Then it is easy to see that the graph associated to $\mathcal{D}^3_-(K) \cup C$ has no loop edge, 
and hence $\mathcal{D}(M(K))$ is $B$--adequate as well. 
Thus $M(K)$ is $B$--adequate. 
\end{proof}
\begin{figure}[!ht]
\includegraphics[width=0.7\linewidth]{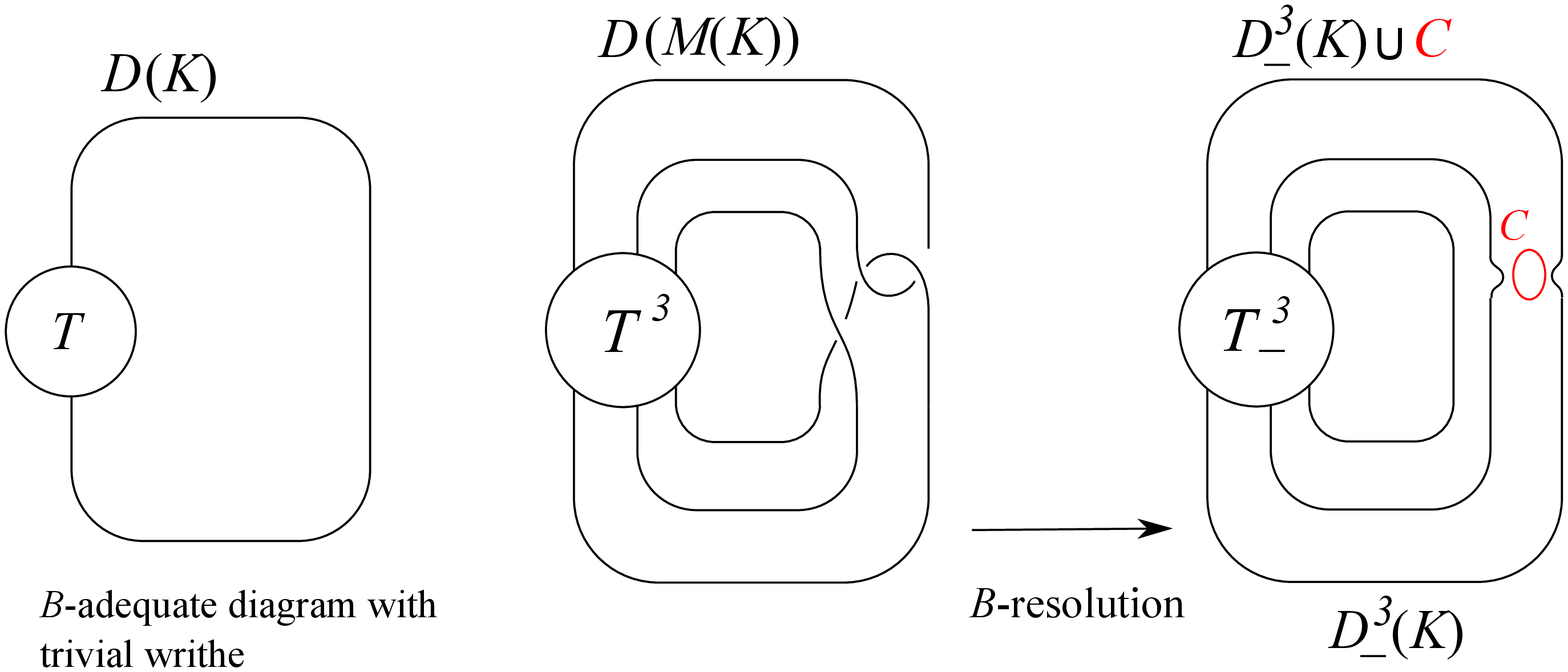}
\caption{A Mazur double of $B$-adequate knot $K$ is $B$-adequate.}
\label{B_adequate_Mazur_double}
\end{figure}

\bigskip

\end{document}